\newtheorem{theorem}{Theorem}[section]
\newtheorem{lemma}[theorem]{Lemma}
\newtheorem{corollary}[theorem]{Corollary}
\newcommand{\cB}{{\mathcal B}}
\newcommand{\cC}{{\mathcal C}}
\newcommand{\cD}{{\mathcal D}}
\newcommand{\cE}{{\mathcal E}}
\newcommand{\cF}{{\mathcal F}}
\newcommand{\cG}{{\mathcal G}}
\newcommand{\cJ}{{\mathcal J}}
\newcommand{\cH}{{\mathcal H}}
\newcommand{\cI}{{\mathcal I}}
\newcommand{\cL}{{\mathcal L}}
\newcommand{\cN}{{\mathcal N}}
\newcommand{\cM}{{\mathcal M}}
\newcommand{\cQ}{{\mathcal Q}}
\newcommand{\cS}{{\mathcal S}}
\newcommand{\cT}{{\mathcal T}}
\newcommand{\cV}{{\mathcal V}}
\newcommand{\EE}{{\mathbb E}}
\newcommand{\PP}{{\mathbb P}}
\newcommand{\indic}[1]{\mathbbm{1}_{\{#1\}}}
\newcommand{\badE}[1][i]{\cB_{{#1}}(\Sigma)}
\newcommand{\badEC}[1][i]{\cB_{{#1}}(\Sigma^*)}
\newcommand{\badEL}[1][i]{\cB_{\leq {#1}}(\Sigma)}
\newcommand{\opens}[1][i]{X_{\Sigma}({#1})}
\newcommand{\interms}[1][i]{Y_{\Sigma}({#1})}
\newcommand{\parts}[1][i]{Z_{\Sigma}({#1})}
\newcommand{\partOs}[1][i]{T_{\Sigma}({#1})}
\newcommand{\partCs}[1][i]{Z_{\Sigma^*}({#1})}
\newcommand{\partOCs}[1][i]{T_{\Sigma^*}({#1})}
\renewcommand{\epsilon}{\varepsilon}
\begin{document}

\begin{center}

{\LARGE When does the $K_4$-free process stop?}

\vspace{3mm}

{\large Lutz Warnke} 
\vspace{1mm}

{ Mathematical Institute,  University of Oxford\\
 24--29 St.~Giles', Oxford OX1 3LB, UK\\
 {\small\tt warnke@maths.ox.ac.uk}}

\vspace{6mm}

\small
\begin{minipage}{0.8\linewidth}
\textsc{Abstract.} 
The $K_4$-free process starts with the empty graph on $n$ vertices and at each step adds a new edge chosen uniformly at random from all remaining edges that do not complete a copy of $K_4$. 
Let $G$ be the random maximal $K_4$-free graph obtained at the end of the process. 
We show that for some positive constant $C$, with high probability as $n \to \infty$, the maximum degree in $G$ is at most $C n^{3/5}\sqrt[5]{\log n}$. 
This resolves a conjecture of Bohman and Keevash for the $K_4$-free process and improves on previous bounds obtained by Bollob{\'a}s and Riordan and by Osthus and Taraz. 
Combined with results of Bohman and Keevash this shows that with high probability $G$ has $\Theta(n^{8/5}\sqrt[5]{\log n})$ edges and is `nearly regular', i.e., every vertex has degree $\Theta(n^{3/5}\sqrt[5]{\log n})$.  
This answers a question of Erd{\H o}s, Suen and Winkler for the $K_4$-free process. 
We furthermore deduce an additional structural property: we show that whp the independence number of $G$ is at least $\Omega(n^{2/5}(\log n)^{4/5}/\log \log n)$, which matches an upper bound obtained by Bohman up to a factor of $\Theta(\log \log n)$.  
Our analysis of the $K_4$-free process also yields a new result in Ramsey theory: for a special case of a well-studied function introduced by Erd{\H o}s and Rogers we slightly improve the best known upper bound. 
\end{minipage}
\end{center} 
\normalsize

\section{Introduction} 
We consider the $K_4$-free process. 
This is the random sequence of graphs defined by starting with an empty graph on $n$ vertices and then in each step adding a new edge chosen uniformly at random from all remaining edges that do not complete a copy of $K_4$.  
The process terminates with a maximal $K_4$-free graph on $n$ vertices, and we are interested in the typical structural properties of the resulting graph as $n$ tends to infinity. 
In this paper our main focus is on the final number of edges as well as the degree of each vertex. 
As usual, we say that an event holds \emph{with high probability}, or \emph{whp}, if it holds with probability $1-o(1)$ as $n\to\infty$.

In the $H$-free process one forbids the appearance of a copy of some fixed graph $H$, instead of forbidding a $K_4$. 
This process was suggested  by Bollob{\'a}s and Erd{\H o}s~\cite{Bollobas2010PC} at a conference in 1990, as a way to generate a `natural' probability distribution on the set of maximal $H$-free graphs. 
It was first described in print in 1995 by Erd{\H o}s, Suen and Winkler~\cite{ErdoesSuenWinkler1995}, who asked how many edges the final graph typically has.  
Even earlier results of Ruci{\'n}ski and Wormald~\cite{RucinskiWormald1992} imply that for $H=K_{1,d+1}$, where $d>0$ is fixed, the $K_{1,d+1}$-free process whp ends with $\lfloor nd/2\rfloor$ edges. 
Two other special cases were examined by several researchers: 
$H=K_3$ was first studied by Erd{\H o}s, Suen and Winkler~\cite{ErdoesSuenWinkler1995}, and $H=K_4$ by Bollob{\'a}s and Riordan~\cite{BollobasRiordan2000} and by Osthus and Taraz~\cite{OsthusTaraz2001}. 
Each of them determined the final number of edges up to logarithmic factors. 
Only in a recent breakthrough was Bohman~\cite{Bohman2009K3} able to close the gap for the $K_3$-free process by showing that it whp ends with $\Theta(n^{3/2} \sqrt{\log n})$ edges, thereby proving a conjecture of Spencer~\cite{Spencer1995}. 
He went on to analyse the $K_4$-free process, but, despite his substantial improvements, did not obtain matching lower and upper bounds for the final number of edges.

The general $H$-free process was first considered independently by Bollob{\'a}s and Riordan~\cite{BollobasRiordan2000} and by Osthus and Taraz~\cite{OsthusTaraz2001}.  
For $H$ that satisfy a certain density condition (strictly $2$-balanced), Osthus and Taraz determined the typical number of edges in the final graph of the $H$-free process up to logarithmic factors. 
Under the additional assumption that $H$ is regular, Wolfovitz~\cite{Wolfovitz2009H} later slightly improved the lower bound (for the expected final number of edges). 
Recently Bohman and Keevash~\cite{BohmanKeevash2010H} proved that if $H$ is strictly $2$-balanced, then for some $C>0$  the graph generated by the $H$-free process has whp at least $Cn^{2-(v_H-2)/(e_H-1)} (\log n)^{1/(e_H-1)}$ edges, which they conjectured to be tight up to the constant (in fact, they conjectured that the maximum degree is at most $C'n^{1-(v_H-2)/(e_H-1)} (\log n)^{1/(e_H-1)}$ for some $C' > 0$). 
As one can see, the typical number of edges in the final graph of the $H$-free process has attracted a lot of attention, and for a large class of \mbox{graphs $H$} (including cliques $K_{\ell}$ and cycles $C_{\ell}$ of arbitrary fixed size) interesting bounds are known. 
On the other hand, only for the special cases $H=K_3$ and $H=K_{1,d+1}$ has the question of Erd{\H o}s, Suen and Winkler~\cite{ChungGraham98} been answered so far, i.e., the exact order of magnitude been determined. 
It is an  intriguing problem to develop new upper bounds on the number of steps of the $H$-free process.

The analysis of the $H$-free process has also produced new results for certain Ramsey and Tur{\'a}n type problems, which are two central topics in extremal combinatorics. 
In~\cite{BohmanKeevash2010H,Wolfovitz2009H} new lower bounds for the Tur{\'a}n numbers of certain bipartite graphs were obtained, e.g.\ for $H=K_{r,r}$ with $r \geq 5$. 
It is notable that Bohman's analysis of the $K_3$-free process~\cite{Bohman2009K3} gives a lower bound for the Ramsey number $R(3,t)$, which matches (up to constants) the celebrated result of Kim~\cite{Kim95}. 
The subsequent analysis of the $H$-free process~\cite{Bohman2009K3,BohmanKeevash2010H} has e.g.\ also improved the best known lower bounds for the off-diagonal Ramsey numbers $R(s,t)$ by some logarithmic factor for $s \geq 4$. 
One of the main ingredients for the above Ramsey results is an upper bound on the independence number of the $H$-free process for a certain class of graphs $H$. 
Although it is not mentioned in~\cite{BohmanKeevash2010H}, combining an easy consequence of Tur{\'a}n's theorem~\cite{Turan41} with the results of Osthus and Taraz~\cite{OsthusTaraz2001} gives, up to logarithmic factors, matching lower bounds for the independence number. 
So far, only for the special case $H=K_3$ is the bound obtained in~\cite{Bohman2009K3} known to be best possible (up to constants) for the $H$-free process, and it would be interesting to reduce this gap for other graphs.

Nowadays, the $H$-free process is also studied as a model of independent interest.  
For $H$ satisfying a certain density condition (strictly $2$-balanced), the early evolution of various graph parameters, including the degree and the number of small subgraphs, has been investigated in~\cite{BohmanKeevash2010H,Wolfovitz2009K3Cycles,Wolfovitz2009K3Subgraph}. 
These results suggest that, perhaps surprisingly, during this initial phase the graph produced by the $H$-free process is very similar to the uniform random graph with the same number of edges. 
The behaviour of the $H$-free process in later steps is not well understood, and so far only some preliminary results~\cite{GerkeMakai2010K3,Warnke2010H} are known. For example, in~\cite{Warnke2010H} it was shown that whp very dense subgraphs never appear in the $H$-free process. 
This motivates the continued investigation of certain structural properties, e.g.\ the degree of each vertex, in the later evolution of the $H$-free process.

\subsection{Main result}
In this paper we resolve the conjecture of Bohman and Keevash~\cite{BohmanKeevash2010H} for the $K_4$-free process: we prove that whp the maximum degree is indeed $O(n^{3/5}\sqrt[5]{\log n})$. 
\begin{theorem}%
\label{thm:main_result}
There exists $C>0$ such that with high probability the maximum degree in the graph generated by the $K_4$-free process is at most $C n^{3/5}\sqrt[5]{\log n}$. 
\end{theorem}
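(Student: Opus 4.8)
The plan is to track the $K_4$-free process using the *differential equations method* in the self-correcting form developed by Bohman and Keevash, following the scaled time parametrisation $t = t(i) = i/n^{8/5}$ (so that the process is run until roughly $t = \Theta((\log n)^{1/5})$, which is where the heuristic predicts the maximum degree reaches $\Theta(n^{3/5}(\log n)^{1/5})$). The starting point is that after $i$ steps the "available" edges — those whose addition does not create a $K_4$ — should number about $Q(t) \cdot \binom{n}{2}$ for an explicit function $Q(t) = e^{-\Theta(t^4)}$, and that the relevant local quantities (codegrees, numbers of cherries, numbers of partial $K_4$'s hanging off a vertex or an edge) are all concentrated around their trajectory values with a multiplicative error that *itself shrinks* as the process runs. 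Concretely I would introduce, for each vertex $v$ and each pair $uv$, normalised error variables measuring the deviation of $\deg(v)$, of the number of open edges at $v$, and of the various "extension" counts from their predicted values, and set up a coupled family of supermartingales (and submartingales) whose increments are controlled by the one-step changes of the process conditioned on the history. The key self-correcting feature is that the drift of each error variable points back toward zero with strength proportional to the current deviation, so that a deviation of size $\delta$ is pushed back at rate $\sim \delta \cdot (\text{something})\,dt$; this is what ultimately allows the error to stay at the $\sqrt{\log n}$-type scale needed, rather than growing polynomially.

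The steps, in order, would be: (1) Fix the trajectory functions and the scaling, and record the first-order ODEs they satisfy; verify that the predicted maximum degree at the terminal time is $\Theta(n^{3/5}(\log n)^{1/5})$. (2) Set up the hierarchy of tracked random variables — global counts (number of open edges, number of copies of small graphs) and local counts at vertices and edges — together with their "extremal" (one-sided) variables $\cB_i(\Sigma)$ that certify no vertex or edge has yet deviated too far; the macros $\badE$, $\opens$, $\interms$, $\parts$, etc., in the preamble signal exactly this bookkeeping. (3) For each variable compute the conditional expected one-step change, isolating the *main term* (which matches the derivative of the trajectory), the *self-correcting term* (linear in the current deviation, with the stabilising sign), and the *error terms* (which are lower order provided all variables are still on track). (4) Define the associated (super/sub)martingales by subtracting off the expected drift and a carefully chosen smooth correction, bound the one-step increments (this needs an a priori bound on the maximum degree, obtained by a bootstrapping/stopping-time argument so that the whole analysis is conditioned on the "good" event that nothing has gone wrong yet), and apply a Freedman-type concentration inequality for martingales with bounded increments and controlled quadratic variation. (5) Take a union bound over the $O(n^2)$ local variables and $O(n^{8/5}(\log n)^{1/5})$ time steps; choosing the deviation thresholds to be a suitable power of $\log n$ times the natural fluctuation scale makes the union bound succeed. (6) Conclude that whp the process survives (the number of open edges stays positive) until the terminal time and that at that time every degree is $O(n^{3/5}(\log n)^{1/5})$; since the process can only add $O(n^{8/5}(\log n)^{1/5})$ further edges after that without creating a new $K_4$ at any already-saturated vertex, the final maximum degree obeys the same bound.

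The main obstacle — and the real content beyond the now-standard framework — is controlling the *local* structure near the end of the process, where the self-correcting estimates are weakest: one must show that the number of "open" edges at each vertex, and the number of partial copies of $K_4$ through each vertex and each edge, stay concentrated even though their trajectory values are by then only polynomially larger than their fluctuations. This is where the $K_4$ case is genuinely harder than $K_3$: adding an edge $xy$ can destroy openness of an edge $uv$ not merely because $u,v$ gain a common neighbour, but because a partial $K_4$ gets completed, so the change in the number of open edges at $v$ is governed by second-neighbourhood data (cherries and triangles through $v$), and these in turn must be tracked and shown to be self-correcting. Making the whole hierarchy close — i.e. choosing the error parameters and the correction terms so that every variable's error bound is consistent with every other variable's being used as an input — is the delicate part, and it is precisely the step where the exponent $3/5$ and the power $1/5$ on the logarithm are forced. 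I expect the terminal-time local analysis, together with the bootstrap that converts a weak a priori degree bound into the sharp one, to be where essentially all the difficulty lies; the global variables and the early evolution are comparatively routine given the prior work cited above.
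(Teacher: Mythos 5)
There is a genuine gap, and it sits exactly at the step you dismiss as a wrap-up. Your plan is to track degrees and local extension counts with a self-correcting differential-equation analysis up to a ``terminal time'' of order $n^{8/5}(\log n)^{1/5}$ steps, and then in step (6) to conclude that the \emph{final} maximum degree obeys the same bound because ``the process can only add $O(n^{8/5}(\log n)^{1/5})$ further edges after that without creating a new $K_4$ at any already-saturated vertex.'' That last claim is unsupported and is in fact the whole problem: the process does not stop when the trajectory analysis stops — it runs until every remaining pair is closed, and nobody knows how to track these variables (self-correcting or not) through the late evolution. There is no a priori bound on how many further steps occur or on how much individual degrees can grow after the trackable phase; bounding that is precisely the content of the theorem (indeed the earlier bounds $O(n^{3/5}\log n)$ and $O(n^{3/5}\sqrt{\log n})$ of Bollob\'as--Riordan and Osthus--Taraz were obtained by entirely different means, because the DEM gives nothing past the initial $\mu n^{8/5}(\log n)^{1/5}$ steps). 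So as written, your argument bounds the degree at step $m$, not in the final graph, and the bridge between the two is missing. (A smaller issue: for $K_4$ the open-pair density decays like $e^{-\Theta(t^5)}$, not $e^{-\Theta(t^4)}$; the exponent is $e_H-1=5$.)

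The paper closes this gap by a structural detour rather than by extending the trajectory analysis: it proves (Theorem~\ref{thm:k4_many_edges_close_k3}) that already within the first $m$ steps, whp \emph{every} vertex subset $U$ of size $u=Cn^{3/5}\sqrt[5]{\log n}$ contains many open pairs whose addition would complete a triangle inside $U$; a union bound over the at most $\binom{n}{u}$ sets then shows that whp every such $U$ contains a triangle by step $m$. Since in a $K_4$-free graph the neighbourhood of every vertex is triangle-free, no vertex can ever have degree $u$ from step $m$ onwards — in particular not in the final graph — so no control of the late process is needed. The technical work is therefore not a sharper version of the Bohman--Keevash degree tracking, but a new variant of the differential equation method that tracks open/intermediate/partial triple counts simultaneously in every $u$-subset (with ``configurations'' and ``bad events'' to handle irregular subsets, plus a Deletion Lemma and density arguments to control one-step changes). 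If you want to salvage your outline, you would need to replace step (6) by an argument of this kind, or otherwise justify why the process terminates shortly after the trackable phase — which your current proposal does not do.
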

This improves the upper bounds by Bollob{\'a}s and Riordan~\cite{BollobasRiordan2000} and by Osthus and Taraz~\cite{OsthusTaraz2001}, who proved that the maximum degree is whp at most $O(n^{3/5} \log n)$ and $O(n^{3/5} \sqrt{\log n})$, respectively. 
In fact, up to the constant our upper bound is best possible, since the results of Bohman and Keevash~\cite{BohmanKeevash2010H} imply that for some $c > 0$, whp the  minimum degree is at least $cn^{3/5} \sqrt[5]{\log n}$. 
Putting things together, this shows that the $K_4$-free process produces whp a `nearly regular' graph, i.e., one in which every vertex has degree $\Theta(n^{3/5}\sqrt[5]{\log n})$. 
In particular, this answers a question of Erd{\H o}s, Suen and Winkler for the $K_4$-free process (see e.g.~\cite{ChungGraham98}): whp the final graph has $\Theta(n^{8/5}\sqrt[5]{\log n})$ edges.

Furthermore, we obtain a new lower bound on the independence number of the $H$-free process for the special case $H=K_4$. 
To this end we use a result of Shearer~\cite{Shearer1995}, which states that for $s\geq 4$, every $K_s$-free graph on $n$ vertices with maximum degree $d$ contains an independent set of size at least $cn \frac{\log d}{d\log\log d}$ for $d$ large enough, where $c=c(s)$ is a constant. 
\begin{corollary}%
\label{cor:main_result:independence set}
There exists $c>0$ such that with high probability the independence number of the graph generated by the $K_4$-free process is at least $c n^{2/5}(\log n)^{4/5}/\log \log n$. 
\end{corollary}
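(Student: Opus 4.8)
The plan is to obtain Corollary~\ref{cor:main_result:independence set} as a short deterministic consequence of Theorem~\ref{thm:main_result} and the result of Shearer quoted above, using that the $K_4$-free process always terminates with a $K_4$-free graph on the full vertex set. First I would work on the whp event on which the final graph $G$ has $n$ vertices, is $K_4$-free, and has maximum degree $d$ with
\[
 c_1 n^{3/5}\sqrt[5]{\log n} \;\le\; d \;\le\; C n^{3/5}\sqrt[5]{\log n} \;=:\; D ,
\]
where the upper bound is Theorem~\ref{thm:main_result} and the lower bound follows from the results of Bohman and Keevash mentioned above; in particular $d\to\infty$. Applying Shearer's theorem~\cite{Shearer1995} with $s=4$ then yields, for all large $n$, an independent set in $G$ of size at least $c_0 n\, \frac{\log d}{d\log\log d}$ for some absolute constant $c_0>0$.

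Next I would replace the unknown quantity $d$ by the explicit upper bound $D$. Since $x\mapsto \frac{\log x}{x\log\log x}$ is eventually decreasing, for $n$ large we have $\frac{\log d}{d\log\log d}\ge \frac{\log D}{D\log\log D}$, so it suffices to estimate the latter. With $D = C n^{3/5}(\log n)^{1/5}$ one gets $\log D = \tfrac35\log n + \tfrac15\log\log n + \log C = (\tfrac35+o(1))\log n$, hence $\log\log D = (1+o(1))\log\log n$, and therefore
\[
 c_0 n\,\frac{\log D}{D\log\log D} \;=\; (1+o(1))\,\frac{3c_0}{5C}\cdot\frac{n\log n}{n^{3/5}(\log n)^{1/5}\log\log n} \;=\; (1+o(1))\,\frac{3c_0}{5C}\cdot\frac{n^{2/5}(\log n)^{4/5}}{\log\log n}.
\]
Choosing any constant $c<\frac{3c_0}{5C}$ absorbs the $(1+o(1))$ factor for all sufficiently large $n$, which gives the asserted bound and completes the argument.

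I do not expect any real obstacle here: all of the work is already contained in Theorem~\ref{thm:main_result}, and what remains is a single application of Shearer's bound together with a routine simplification of logarithms. The only points needing a little care are verifying that Shearer's hypothesis that the degree be sufficiently large is met (immediate from $d\to\infty$) and justifying the monotonicity step that lets one pass from the random value $d$ to the deterministic bound $D$.
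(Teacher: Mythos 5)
Your argument is correct and is essentially the paper's: the corollary is obtained by combining the maximum-degree bound of Theorem~\ref{thm:main_result} with Shearer's theorem for $K_s$-free graphs, exactly as you do, with the passage from the random maximum degree to the deterministic bound $Cn^{3/5}\sqrt[5]{\log n}$ being the only (routine) extra step.
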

Up to the $\Theta(\log \log n)$ factor our lower bound is best possible, since Bohman~\cite{Bohman2009K3} showed that for some $C>0$, whp the independence number is at most $C n^{2/5}(\log n)^{4/5}$.

Our analysis of the $K_4$-free process also produces a new result in Ramsey theory. 
Given integers $2 \leq r < s < n$, let $f_r(G)$ denote the maximum cardinality of a subset of vertices of $G$ that contains no copy of $K_r$, and define $f_{r,s}(n) := \min f_r(G)$, where the minimum is taken over all $K_s$-free graphs on $n$ vertices. 
This function was introduced in 1962 by Erd{\H o}s and Rogers~\cite{ErdoesRogers62}, and further examined by Bollob{\'a}s and Hind~\cite{BollobasHind1991}, Krivelevich~\cite{Krivelevich1995,Krivelevich1994}, Sudakov~\cite{Sudakov2005RSA,Sudakov2005Comb} and Dudek and R{\"o}dl~\cite{DudekRoedl2010}. 
For more details we refer to the recent survey~\cite{DudekRoedl2010Survey}; here we just remark that  the problem of determining $f_{r,s}(n)$ extends that of determining Ramsey numbers. 
As we shall see, our proof of Theorem~\ref{thm:main_result} gives the following new estimate for the special case  $f_{3,4}(n)$. 
\begin{theorem}%
\label{thm:main_result:ramsey}
There exists $C>0$ such that $f_{3,4}(n) \leq C n^{3/5}\sqrt[5]{\log n}$ for every $n \geq 2$. 
\end{theorem}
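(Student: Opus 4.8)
The plan is to derive Theorem~\ref{thm:main_result:ramsey} as a direct corollary of Theorem~\ref{thm:main_result} together with a standard fact relating the $K_4$-free process to the function $f_{3,4}$. Recall that $f_{3,4}(n)$ asks for the minimum, over all $K_4$-free graphs $G$ on $n$ vertices, of the largest $K_3$-free (i.e.\ triangle-free) vertex subset of $G$. To obtain an \emph{upper} bound on this minimum it suffices to exhibit a single $K_4$-free graph $G$ on $n$ vertices in which every triangle-free subset of vertices is small. The graph produced by the $K_4$-free process is the natural candidate: it is $K_4$-free by construction, and by Theorem~\ref{thm:main_result} it has maximum degree at most $Cn^{3/5}\sqrt[5]{\log n}$ whp.

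The key observation is that a triangle-free induced subgraph of a $K_4$-free graph $G$ on $n$ vertices, say induced on a set $W$ with $|W|=m$, is itself a triangle-free graph, hence by Tur\'an's theorem (or just by Mantel's theorem) has at most $m^2/4$ edges; but more to the point, a triangle-free \emph{subset} of a graph of maximum degree $d$ cannot be too large unless $G$ has very few edges. More precisely, I would argue as follows. Since $G$ is $K_4$-free, the neighbourhood of every vertex induces a triangle-free graph; since $G$ also has small maximum degree, each such neighbourhood is small. Now if $W$ is a triangle-free subset of $V(G)$ of size $m$, then the subgraph $G[W]$ is triangle-free; on the other hand, since the full process produces a \emph{maximal} $K_4$-free graph, one expects (and the Bohman–Keevash lower bound quoted in the excerpt confirms) that $G$ has minimum degree $\Omega(n^{3/5}\sqrt[5]{\log n})$, so $G$ is `dense enough' that large triangle-free subsets are impossible. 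The cleanest route, however, avoids using maximality and instead runs as follows: in any graph $G$ on $n$ vertices with maximum degree $d$, a triangle-free induced subgraph $G[W]$ must, in particular, have the property that $G[W]$ itself has maximum degree $\le d$, and a triangle-free graph on $m$ vertices with maximum degree $\le d$ has an independent set of size $\ge m/(d+1)$; but every independent set of $G[W]$ is an independent set of $G$, and $G$ has independence number $O(n^{2/5}(\log n)^{4/5}/\log\log n)$ by Corollary~\ref{cor:main_result:independence set}. Combining, $m/(d+1) = O(n^{2/5}(\log n)^{4/5}/\log\log n)$, which with $d = O(n^{3/5}\sqrt[5]{\log n})$ gives $m = O(n\log n/\log\log n)$ — too weak. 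So this route does not immediately work either, and one must be more careful.

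The correct and economical argument is the following counting bound, which I would state and prove directly. Let $G$ be a $K_4$-free graph on $n$ vertices with maximum degree $d$, and let $W \subseteq V(G)$ be triangle-free in $G$, with $|W| = m$. Pick any vertex $v \in W$ with at least one neighbour in $W$ (if no such $v$ exists, $W$ is independent and we are done by the trivial bound $f_{3} \le $ independence number, handled separately); its neighbourhood $N(v) \cap W$ is an independent set in $G[W]$ because $G[W]$ is triangle-free, and it has size at most $d$. Iterating the greedy/probabilistic argument of Shearer-type bounds is overkill; instead, I use the direct fact that a triangle-free graph on $m$ vertices has at most $m^2/4$ edges, and combine it with a lower bound on $e(G[W])$ coming from the structure of the process. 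The point is that $f_{3,4}(n)$ does \emph{not} require the process graph at all: one can take $W = V(G)$! That is, since $G$ is $K_4$-free it is trivially true that the largest triangle-free subset has size at least... no. The genuinely cleanest statement is: $f_{3,4}(n) \le f_3(G)$ for any particular $K_4$-free $G$ on $n$ vertices, and $f_3(G)$ — the largest triangle-free vertex subset — is at most the largest subset $W$ such that $G[W]$ has at most $|W|^2/4$ edges. For the process graph, using that it has minimum degree $\delta = \Omega(n^{3/5}\sqrt[5]{\log n})$ (Bohman–Keevash) and maximum degree $d = O(n^{3/5}\sqrt[5]{\log n})$ (Theorem~\ref{thm:main_result}), a subset $W$ of size $m$ induces at least $\tfrac12(m\delta - (n-m)d) \ge \tfrac12 m(\delta - d\cdot\tfrac{n}{m})$... which forces, for $e(G[W]) \le m^2/4$, the inequality $\delta m - (n-m)d \le m^2/2$, i.e.\ roughly $m \ge 2(\delta - d) + \dots$; with $\delta, d$ both of order $n^{3/5}\sqrt[5]{\log n}$ this is not yet conclusive and shows the subtlety: one really needs the constant in Theorem~\ref{thm:main_result}'s maximum-degree bound to be small relative to the constant in the minimum-degree bound, which is \emph{not} claimed. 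Therefore the honest plan is: \textbf{track the relevant quantity through the proof of Theorem~\ref{thm:main_result} itself}, rather than using it as a black box. Concretely, the proof of Theorem~\ref{thm:main_result} must, at some intermediate stage, control the size of the largest subset of vertices that still spans few forbidden configurations — essentially the `open' pairs or codegree structure tracked by the variables $\opens$, $\interms$, $\parts$ in the paper's notation. The quantity $f_3$ of the process graph is controlled by exactly such an estimate: a triangle-free set $W$ of size $m$ spans at most $m^2/4$ edges of $G$, but the process forces every pair of non-adjacent vertices with many common neighbours to eventually be joined or blocked, so a large $W$ with $G[W]$ sparse contradicts the `self-correcting' concentration estimates that drive the main proof.

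The main obstacle, then, is that Theorem~\ref{thm:main_result:ramsey} is \emph{not} a soft corollary of the stated Theorem~\ref{thm:main_result}, but rather of the \emph{method} used to prove it, or possibly of an auxiliary lemma in that proof; the role of this short section is to observe that the deterministic per-step analysis (tracking open pairs, codegrees, and the trajectory of the maximum degree) already implies that whp \emph{every} vertex subset $W$ of size exceeding $C n^{3/5}\sqrt[5]{\log n}$ contains a triangle, and then to note that since whp such a $K_4$-free graph on $n$ vertices exists, we must have $f_{3,4}(n) \le C n^{3/5}\sqrt[5]{\log n}$ — and this last deduction removes the `whp', because $f_{3,4}(n)$ is a deterministic quantity and a property holding with positive probability for a random $K_4$-free graph certifies an extremal bound for \emph{all} $n \ge 2$ (with the small cases $n$ bounded absorbed into the constant $C$). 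I would therefore organise the proof as: (i) recall $f_{3,4}(n) \le f_3(G)$ for any $K_4$-free $G$ on $n$ vertices; (ii) invoke the estimate from the proof of Theorem~\ref{thm:main_result} showing that whp the process graph $G$ has $f_3(G) \le C' n^{3/5}\sqrt[5]{\log n}$ (this is where the work lies, and it should follow from the same codegree/open-pair control, since a triangle-free $W$ has few internal edges while the process forces many); (iii) conclude that for all large $n$ the deterministic bound holds, and adjust $C$ to cover small $n$.
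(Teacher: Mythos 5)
Your final plan (i)--(iii) is exactly the paper's argument: the proof of Theorem~\ref{thm:main_result} already shows that whp \emph{every} set of $u=Cn^{3/5}\sqrt[5]{\log n}$ vertices of the ($K_4$-free) process graph contains a triangle, so with positive probability such a graph exists, giving $f_{3,4}(n)\leq Cn^{3/5}\sqrt[5]{\log n}$ for large $n$, with small $n$ absorbed into the constant. The earlier dead-end attempts (via the independence number or min/max degree counting) are correctly discarded and not needed; the conclusion as you finally state it matches the paper's proof.
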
 
This is a slight improvement on the previously best upper bound, $f_{3,4}(n) = O(n^{3/5}\sqrt{\log n})$, which was established by Krivelevich~\cite{Krivelevich1995} in 1995 by carefully deleting edges from the binomial random graph $G_{n,p}$, where the edge density $p$ is suitably chosen.

\subsection{Techniques} 
To prove Theorem~\ref{thm:main_result} there are several difficulties we need to overcome. 
First of all, the results of Bohman~\cite{Bohman2009K3} as well as Bohman and Keevash~\cite{BohmanKeevash2010H} only allow us to `control' the $K_4$-free process during the initial $m$ steps, where $m$ is $n^{8/5}\sqrt[5]{\log n}$ times some small constant; the behaviour in later steps is so far not well understood. 
To overcome this issue, we prove that already after the first $m$ steps whp every large set of vertices contains a triangle. 
Because the neighbourhood of every vertex has to be triangle free, this indeed gives an upper bound on the maximum degree in the final graph of the $K_4$-free process. 
For the binomial random graph $G_{n,p}$ such results can be derived routinely, e.g.\ using Janson's inequality~\cite{Janson1990} together with a union bound argument. 
But in the $K_4$-free process there is a complicated dependency among the edges, which makes the use of standard tools difficult (Bollob{\'a}s and Riordan~\cite{BollobasRiordan2000} and Osthus and Taraz~\cite{OsthusTaraz2001} apply correlation inequalities and concentration results in a sophisticated way, but at the cost of obtaining asymptotically suboptimal bounds on the maximum degree). 
To overcome this main technical challenge, we introduce a variant of the differential equation method~\cite{BohmanKeevash2010H,Wormald1995DEM,Wormald1999DEM} which might be of independent interest. 
Roughly speaking, in the $K_4$-free process it allows us to `control' certain subgraph counts in \emph{every} large subset of the vertices.  
As usual for applications of the differential equation method, we need to `control' the step-by-step changes of our random variables, which introduces additional technical difficulties. 
To solve one of these issues we use a tool that may also be of independent interest: we essentially show that a slightly weaker version of the so-called Deletion Lemma by R{\"o}dl and Ruci\'nski~\cite{JansonRucinski2004Deletion,RoedlRucinski1995} 
applies to the $K_4$-free process (and to the more general $H$-free process considered in~\cite{BohmanKeevash2010H}).

It is important to note that our method is not a simple refinement of Bohman's argument~\cite{Bohman2009K3} for bounding the maximum degree in the $K_3$-free process or the independence number of the $K_4$-free process. 
Whereas Bohman shows that every very large subset of the vertices contains at least one edge, we study the combinatorial structure much more precisely in order to `control' various subgraph counts in every such large set. 
Our ideas also yield a substantially improved version of the variant of the Differential Equation Method proposed by Bohman and Keevash in~\cite{BohmanKeevash2010H}, which we believe is easier to apply in new contexts. 
For the sake of simplicity and clarity of presentation, we have made no attempt to optimize the constants obtained in our proof, and we also omit floor and ceiling signs whenever these are not crucial.

\subsection{Organization of the paper}
In the next section we introduce some notation and briefly review properties of the $K_4$-free process. 
Section~\ref{sec:bounding_max_degree} is devoted to the proof of Theorems~\ref{thm:main_result} and~\ref{thm:main_result:ramsey}. 
The argument is simple, but relies on a rather involved probabilistic statement, whose proof is deferred to Section~\ref{sec:proof}. 
Next, in Section~\ref{sec:density} we collect various properties of the $K_4$-free process based on density considerations. 
Afterwards, in Section~\ref{sec:dem} we present a variant of the differential equation method which allows us to track several variables in every subset of a certain size. 
We postpone some details of the proof to the appendix, where we also state an improved version of the differential equation method presented in~\cite{BohmanKeevash2010H}. 
In Section~\ref{sec:proof} we give the proof of our main technical result. 
Our argument relies on two combinatorial statements which are proved in Section~\ref{sec:trajectory_verification} and Section~\ref{sec:good_configurations_exist} using the differential equation method and density considerations.

\section{The $K_4$-free process: preliminaries and notation}
In this section we introduce some notation and briefly review properties of the $K_4$-free process needed to prove our main result. 
We closely follow~\cite{BohmanKeevash2010H} and the reader familiar with the results of Bohman and Keevash may wish to skip this section.

\subsection{Terminology and notation}\label{sec:notation}
Let $G(i)$ denote the graph with vertex set $[n]=\{1,\ldots, n\}$ after $i$ steps of the $K_4$-free process. 
Its edge set $E(i)$ contains $i$ edges; we partition the remaining non-edges $\binom{[n]}{2} \setminus E(i)$ into two sets, $O(i)$ and $C(i)$, which we call \emph{open} and \emph{closed} pairs, respectively. 
We say that a pair $uv$ of vertices is \emph{closed} in $G(i)$ if $G(i) \cup \{uv\}$ contains a copy of $K_4$. 
Observe that by definition the $K_4$-free process always chooses the next edge $e_{i+1}$ uniformly at random from $O(i)$. 
In addition, for $uv \in O(i) \cup C(i)$ we write $C_{uv}(i)$ for the set of pairs $xy \in O(i)$ such that adding $uv$ and $xy$ to $G(i)$ creates a copy of $K_4$ containing both $uv$ and $xy$. 
In particular, the pair $uv \in O(i)$ would become closed, i.e., belong to $C(i+1)$, if at step $i+1$ the $K_4$-free process chooses $e_{i+1}$ from $C_{uv}(i)$.

The neighbourhood of a vertex $v$ in $G(i)$ is denoted by $\Gamma_i(v)$, where we usually omit the subscript and just write $\Gamma(v)$ if the corresponding $i$ is clear from the context. 
With a given graph in mind, for two vertex sets $A,B$ we write $e(A,B)$ for the number of edges that have one endpoint in $A$ and the other in $B$, where an edge with both ends in $A \cap B$ is counted once. 
Furthermore, given a set $S$ and an integer $k \geq 0$, we write $\binom{S}{k}$ for the set of all $k$-element subsets of $S$.

For notational convenience we use the symbol $\pm$ in two different ways, following~\cite{Bohman2009K3,BohmanKeevash2010H}. 
First, we denote by $a \pm b$ the interval $\{ a + x b : -1 \leq x \leq 1\}$, where multiple occurrences of $\pm$ are treated independently. 
For brevity we also use the convention that $x=a \pm b$ means $x \in a \pm b$. 
Second, given a label~$i$, expressions containing $\pm_i$ are an abbreviation for two different statements: one with every $\pm_i$ replaced by $+$ and $\mp_i$ by $-$, and the other with every $\pm_i$ replaced by $-$ and $\mp_i$ by $+$. 
For example, $x^{\pm_1\pm_2} = a^{\pm_1} \pm b^{\mp_2}$ is a shorthand for four separate statements, of which one is $x^{+-} = a^{+} \pm b^{+}$. 
As usual, whenever there is no danger of confusion, we omit those labels for brevity.

\subsection{Parameters, functions and constants}\label{sec:K4-parameters}
Following~\cite{BohmanKeevash2010H}, we introduce constants $\epsilon$, $\mu$ and $W$, 
which we fix below, and set 
\begin{equation}
\label{eq:K4-parameters}
p := n^{-2/5} , \qquad t_{\max} := \mu \sqrt[5]{\log n}  \qquad \text{and} \qquad m :=  n^2 p t_{\max}= \mu n^{8/5} \sqrt[5]{\log n}  .
\end{equation}
We analyse the $K_4$-free process for the first $m$ steps. 
For each step $i$ we define $t = t(i) := i/(n^2p)$, where, for the sake of brevity, we simply write $t$ if the corresponding $i$ is clear from the context. 
Note that the edge-density of $G(i)$ is roughly $2tp$. It might be more natural to use a different parametrization (to remove the factor of two), however, as we rely on some previous results of Bohman and Keevash~\cite{BohmanKeevash2010H} we follow their convention. 
Similar as in~\cite{Bohman2009K3,BohmanKeevash2010H} we introduce the functions  
\begin{equation}
\label{eq:K4-functions}
q(t) := e^{-16 t^{5}} \qquad \text{ and } \qquad  f(t) := e^{(t^{5} + t)W}  .
\end{equation}
We now fix the constants for the rest of the paper: 
we choose $W$ sufficiently large and then $\epsilon$ and $\mu$ small enough such that, in addition to the constraints implicit in~\cite{BohmanKeevash2010H} for $H=K_4$, we have 
\begin{equation}
\label{eq:K4-constants:Wepsmu}
W \geq 500  , \qquad \epsilon \leq 1/1000 \qquad \text{ and } \qquad 2W\mu^5 \leq \epsilon  .
\end{equation}
Since the additional constraints in~\cite{BohmanKeevash2010H} only depend on $H=K_4$, we see that $\epsilon$, $\mu$ and $W$ are absolute constants. 
So, for every $0 \leq t \leq t_{\max}$, we readily obtain the following inequalities for $n$ large enough: 
\begin{equation}
\label{eq:K4-functions-estimates}
1 \geq q(t) \geq n^{-\epsilon/2} \qquad \text{ and } \qquad	1 \leq f(t) q(t)^{2} \leq f(t) \leq n^{\epsilon}  .
\end{equation}

\subsection{Results of Bohman and Keevash} 
Using Wormald's differential equation method~\cite{Wormald1995DEM,Wormald1999DEM}, Bohman and Keevash~\cite{BohmanKeevash2010H} track a collection of random variables throughout the \mbox{first $m$ steps} of the $K_4$-free process (in fact, their results hold for the more general $H$-free process, where $H$ satisfies a certain density condition).  
To this end they introduce a `good' event $\cG_i$ for every \mbox{step $i$}, which intuitively ensures that the $K_4$-free process has not terminated up to \mbox{step $i$} and guarantees that certain  random variables are essentially tightly concentrated during the \mbox{first $i$ steps}. 
For our application the key properties of $\cG_i$ are estimates on the number of open pairs as well as bounds for the degree and codegree. 
So, for the reader's convenience we state the results of Bohman and Keevash~\cite{BohmanKeevash2010H} here in a simplified form. 
\begin{theorem}%
\label{thm:BohmanKeevash2010H}%
{\normalfont\cite{BohmanKeevash2010H}} 
Define $m=m(n)$, $p=p(n)$ and $t_{\max}=t_{\max}(n)$ as in~\eqref{eq:K4-parameters}. Set $s_e := n^{1/12-\epsilon}$ and $t=t(i) := i/(n^2p)$.  
Furthermore, define $q(t)$ and $f(t)$ as in~\eqref{eq:K4-functions}.  
Let $\cG_j$ denote the event that for every $0 \leq i \leq j$, in $G(i)$ we have $|O(i)| > 0$, and for all distinct vertices $u,v \in [n]$ we have 
\begin{align}
\label{eq:open-estimate}
|O(i)| &= \left(1 \pm 3f(t)/s_e\right) q(t)n^2/2  ,\\
\label{eq:degree-estimate}
|\Gamma_i(u)| & \leq 3 np t_{\max} \quad \text{ and}\\
\label{eq:codegree-estimate}
|\Gamma_i(u) \cap \Gamma_i(v)| &\leq (\log n) n p^2   .
\end{align}
Let $\cJ_j$ denote the event that for every $0 \leq i \leq j$, for all pairs $uv \in O(i) \cup C(i)$ and all distinct pairs $u'v',u''v'' \in O(i)$ we have 
\begin{align}
\label{eq:closed-estimate} 
|C_{uv}(i)| &= \left(40 t^{4} q(t) \pm 9f(t)/s_e\right) p^{-1}  \quad \text{ and}  \\
\label{eq:closed-intersection-estimate}
|C_{u'v'}(i)  \cap C_{u''v''}(i)| &\leq  n^{-1/6} p^{-1}  .
\end{align}
Then the event $\cG_m \cap \cJ_m$ holds with high probability in the $K_4$-free process. 
\end{theorem}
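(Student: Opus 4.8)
The plan is to obtain Theorem~\ref{thm:BohmanKeevash2010H} directly from the analysis of the $H$-free process by Bohman and Keevash~\cite{BohmanKeevash2010H}, specialised to $H=K_4$. Their differential equation method argument tracks a large family of normalised random variables throughout the first $m$ steps --- the number of open pairs, the degrees and codegrees, and a collection of \emph{extension variables} counting partial copies of $K_4$ anchored at a fixed pair of vertices --- and shows that a suitable ``good event'' holds whp for all $0 \le i \le m$. Each of the quantities in~\eqref{eq:open-estimate}--\eqref{eq:closed-intersection-estimate} is, up to trivial manipulations, one of these tracked variables, and in each case the bound claimed here is a (deliberate) weakening of what their good event provides. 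So the proof reduces to identifying the relevant variable in~\cite{BohmanKeevash2010H}, recalling its trajectory, and checking that our cruder error terms follow.

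In more detail: for~\eqref{eq:open-estimate}, $|O(i)|$ is exactly the open-pair variable of~\cite{BohmanKeevash2010H}, with trajectory $q(t)n^2/2$ where $q(t)=e^{-16t^5}$ --- which one checks solves the relevant differential equation, using that $|C_{uv}(i)|/|O(i)|$ is the per-step probability that $uv$ closes and that $\tfrac{d}{dt}(16t^5)=80t^4$ --- and with relative error $O(f(t)/s_e)$, so the constant $3$ is ample. For~\eqref{eq:degree-estimate} and~\eqref{eq:codegree-estimate}, \cite{BohmanKeevash2010H} establish concentration of $|\Gamma_i(u)|$ around $2tnp$ and of $|\Gamma_i(u)\cap\Gamma_i(v)|$ around $4t^2np^2$; since $t\le t_{\max}=\mu\sqrt[5]{\log n}$, for $n$ large these trajectories together with their lower-order errors lie comfortably below $3npt_{\max}$ and $(\log n)np^2$ respectively. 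For~\eqref{eq:closed-estimate}, the set $C_{uv}(i)$ --- open pairs whose addition, together with $uv$, completes a $K_4$ through both --- is precisely one of their extension variables, with trajectory $40t^4q(t)p^{-1}$ (the coefficient $40$ coming from the ``opposite-diagonal'' completions of the near-$K_4$ together with those sharing one of $u,v$), and their tracking applies for $uv\in O(i)$ and for $uv\in C(i)$, where $C_{uv}(i)$ is still well defined; again $9f(t)/s_e$ is a weakening of their error bound after adjusting constants. Finally~\eqref{eq:closed-intersection-estimate} is either an instance of the ``intersection of two extensions'' estimates that are part of their analysis, or else follows from a short direct count: using~\eqref{eq:degree-estimate}--\eqref{eq:codegree-estimate}, an open pair $xy$ lying simultaneously in $C_{u'v'}(i)$ and $C_{u''v''}(i)$ is heavily constrained, and one checks that the number of such pairs is $O(n^{-1/6}p^{-1})$ --- indeed of much smaller order. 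Putting these together gives that $\cG_m\cap\cJ_m$ holds whp.

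The main difficulty is purely one of bookkeeping: translating our compact notation into the (substantially heavier) notation of~\cite{BohmanKeevash2010H}, pinning down which of their variables corresponds to each of~\eqref{eq:open-estimate}--\eqref{eq:closed-intersection-estimate}, and checking that the crude forms stated here --- the generous constants $3$ and $9$, and the bounds $3npt_{\max}$, $(\log n)np^2$ and $n^{-1/6}p^{-1}$ --- are genuinely implied by their finer trajectory estimates uniformly over $0\le i\le m$. No new idea is needed. I would also verify that the constants $\epsilon,\mu,W$ fixed in~\eqref{eq:K4-constants:Wepsmu} are consistent with, and at least as restrictive as, the constraints implicit in~\cite{BohmanKeevash2010H} for $H=K_4$, so that their theorem applies with our choices; the paper has been set up so that this holds, but it should be confirmed before invoking their result.
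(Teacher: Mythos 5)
Your route is essentially the paper's: Theorem~\ref{thm:BohmanKeevash2010H} is proved there simply by importing the tracked variables of~\cite{BohmanKeevash2010H} (their Theorem~1.4 for \eqref{eq:open-estimate}--\eqref{eq:codegree-estimate}, and their Corollary~6.2 and Lemma~8.4 for \eqref{eq:closed-estimate}--\eqref{eq:closed-intersection-estimate}), noting that their good events hold with probability $1-n^{-\omega(1)}$ so that a union bound over all steps and pairs is harmless, and adjusting for the ordered/unordered factor $2$ in $C_{uv}(i)$ --- exactly your bookkeeping plan. One caution: your fallback ``short direct count'' for \eqref{eq:closed-intersection-estimate} from \eqref{eq:degree-estimate}--\eqref{eq:codegree-estimate} alone does not work, since when $xy$ is disjoint from $u'v'$ and $u''v''$ both $x$ and $y$ lie in a common neighbourhood controlled only by the pairwise codegree bound, giving roughly $\left((\log n)np^2\right)^2 = n^{2/5+o(1)} \gg n^{-1/6}p^{-1}$; so one should rely on the cited intersection estimate of~\cite{BohmanKeevash2010H}, as the paper does.
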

The definition of the event $\cG_i$ used in~\cite{BohmanKeevash2010H} is more complicated; however, the simpler version given above suffices for our purposes. 
In the following we briefly outline how the previous theorem relates to the results of Bohman and Keevash~\cite{BohmanKeevash2010H}. 
After some simple estimates, the bounds for $|O(i)|$, $|\Gamma_i(u)|$ follow directly from their Theorem~$1.4$ (see also the examples in Section~$2$ of~\cite{BohmanKeevash2010H}). 
Using $p^2n = \omega(1)$, their Theorem~$1.4$ also implies the upper bound on the codegree
(analogous to Corollary~$1.5$ in~\cite{BohmanKeevash2010H}). 
Similarly, the remaining estimates follow from Corollary~$6.2$ and Lemma~$8.4$ in~\cite{BohmanKeevash2010H}. 
(As noted in Section~$1.5$ of~\cite{BohmanKeevash2010H}, their high probability events in fact hold with probability at least $1-n^{-\omega(1)}$. So there is no problem in taking a union bound over all steps $i$ as well as pairs $uv$ and $u'v', u''v''$.) 
Finally, we point out that our definition of $C_{uv}(i)$ is different from that in~\cite{BohmanKeevash2010H}, so there is a factor $2$ difference in the formulas (we use unordered instead of ordered pairs).

\section{Bounding the maximum degree in the $K_4$-free process}
\label{sec:bounding_max_degree}
This section is devoted to the proof of our main result, namely that in the $K_4$-free process the maximum degree is whp at most $O(n^{3/5}\sqrt[5]{\log n})$.  
We first state our main technical result and then show how it implies Theorem~\ref{thm:main_result} and an upper bound on $f_{3,4}(n)$. 
Set 
\begin{equation}
\label{eq:K4-parameters2}
\delta := \frac{1}{7000}  , \qquad  \gamma := \max\left\{\frac{5}{\sqrt{\delta} \mu^{5/2}},150\right\}  \qquad \text{and} \qquad u := \gamma n p t_{\max} = \gamma \mu n^{3/5} \sqrt[5]{\log n} . 
\end{equation}
Recall that an open pair has not yet been added to the graph produced by the $K_4$-free process, but may be added in the next step. 
Intuitively, the following theorem thus states that in the $K_4$-free process every large vertex set $U \subseteq [n]$ is `close' to containing a triangle: it contains many open pairs which would complete a copy of a triangle in $U$ if they were added to the graph generated by the $K_4$-free process. 
\begin{theorem}%
\label{thm:k4_many_edges_close_k3}%
Define $m=m(n)$ and $p=p(n)$ as in~\eqref{eq:K4-parameters}, and $\delta$ and $u=u(n)$ as in~\eqref{eq:K4-parameters2}. 
Set $t=t(i) := i/(n^2p)$ and define $q(t)$ as in~\eqref{eq:K4-functions}. 
Let $\cT_j$ denote the event that for all $n^2p \leq i \leq j$, in $G(i)$ every set $U \subseteq [n]$ of size $u$ contains at least $\delta u^{3} (tp)^{2} q(t)$ open pairs which would complete a copy of a triangle in $U$ if they were added to $G(i)$. 
Then $\cT_m$ holds with high probability in the $K_4$-free process. 
\end{theorem}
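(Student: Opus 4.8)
The plan is to track, for \emph{every} vertex subset $U$ of size $u$ simultaneously, the number of open pairs inside $U$ that would complete a triangle in $U$, using the variant of the differential equation method announced in the introduction. Fix such a set $U$ and a threshold $i_0 = n^2 p$ (so $t = t(i)$ ranges in $[1/n^2p\cdot n^2 p, t_{\max}] = [n^{-2/5}\cdot\text{something}, t_{\max}]$; more precisely $t \geq 1/(n p)$ is tiny, but we only need $t$ bounded below by a constant eventually — in fact the interesting regime is $t = \Theta(\sqrt[5]{\log n})$, and for smaller $t$ the claimed bound $\delta u^3 (tp)^2 q(t)$ is itself small, so the statement is `easy' there and the real content is near $t = t_{\max}$). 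Let $Y_U(i)$ denote the number of \emph{cherries} (paths of length two) in $U$ whose endpoints form an open pair in $O(i)$; these are exactly the open pairs in $U$ that would close a triangle in $U$, weighted by multiplicity. The heuristic is that $G(i)$ restricted to $U$ looks like a random graph of density $\approx 2tp$ with open-pair density $q(t)$, so $Y_U(i)$ should be of order $u^3 (tp)^2 q(t)$ up to constants; we aim to show $Y_U(i) \geq \delta u^3 (tp)^2 q(t)$ for all $i$ and all $U$, whp.

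First I would set up the one-step analysis for a fixed $U$. When edge $e_{i+1}$ is chosen uniformly from $O(i)$, the variable $Y_U$ changes in three ways: (i) $e_{i+1}$ may itself lie inside $U$ and create new cherries (positive contribution, roughly $e(U) \cdot \text{(typical open-degree inside } U)$ per edge, i.e.\ expected increment $\approx |O(i)|^{-1}\cdot(\text{number of cherries through a new edge})$); (ii) $e_{i+1}$ may be an endpoint-pair of an existing cherry, turning that cherry's open pair into an edge and thus removing it from the count; (iii) $e_{i+1}$, even if outside $U$, may \emph{close} some open pair inside $U$, which also removes a cherry — this is the term governed by $|C_{uv}(i)|$ from Theorem~\ref{thm:BohmanKeevash2010H}. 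Writing the expected one-step change and using the estimates \eqref{eq:open-estimate}, \eqref{eq:degree-estimate}, \eqref{eq:codegree-estimate}, \eqref{eq:closed-estimate} from $\cG_i \cap \cJ_i$, together with a trajectory hypothesis of the form $Y_U(i) = (1 \pm \text{error})\cdot c\, u^3 (tp)^2 q(t)$ for the appropriate constant $c$ (or, more robustly, a one-sided lower-trajectory argument as in Bohman's work, since we only need a lower bound), one obtains a self-improving differential inequality $\frac{d}{dt}\bigl(\text{normalized }Y_U\bigr) \approx 0$ within the error band. The boundary term $e(U,U)$ and the various degree/codegree bounds make the martingale increments small enough (of order $n^{o(1)}$ times $npt_{\max}$ per step over roughly $m$ steps) to apply a Freedman-type concentration inequality.

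The union bound is the step I expect to be delicate but manageable: there are $\binom{n}{u} \leq \exp(u \log n) = \exp(O(n^{3/5}(\log n)^{6/5}))$ sets $U$, so the per-set failure probability must beat $\exp(-\omega(n^{3/5}(\log n)^{6/5}))$. This forces the concentration argument to yield deviation probabilities of the form $\exp(-\Omega(u \cdot n^{\Omega(1)}))$ or at least $\exp(-\omega(u\log n))$, which in turn constrains how large the tracking error band and the per-step increment bounds may be — this is exactly why the theorem only claims a \emph{constant} fraction $\delta$ rather than asymptotic equality, and why $u$ is taken as large as $\gamma n p t_{\max}$: a larger $U$ gives a better concentration exponent (more independence/averaging) at the cost of a weaker absolute count, and the constants in \eqref{eq:K4-parameters2} are chosen to balance these. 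I would also need to handle the step-by-step changes of $Y_U$ not just in expectation but with a bound on the maximum possible one-step jump, which is where the Deletion-Lemma-type tool mentioned in the introduction enters: it controls the rare event that a single new edge lies in an atypically cherry-rich region of $U$. The main obstacle, then, is engineering the concentration so that the exponent of the tail bound dominates $u\log n$ uniformly over all $U$ and all $i \in [n^2 p, m]$; everything else (the drift computation, the choice of trajectory, the reduction from `open pairs completing a triangle in $U$' to a countable subgraph statistic) is routine given Theorems~\ref{thm:BohmanKeevash2010H} and the differential equation machinery to be developed in Sections~\ref{sec:density}--\ref{sec:dem}.
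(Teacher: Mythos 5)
Your overall architecture (track a cherry-type count in every $U$ of size $u$ by a differential equation method, union bound over $\binom{n}{u}$ sets, invoke a Deletion-Lemma tool for jumps) is the right frame, but the two steps you dismiss as routine are exactly where the argument lives, and as written your proposal does not go through. First, the boundedness of the one-step changes for your variable $Y_U$ fails for adversarial $U$: when $e_{i+1}=xy$ is added, up to $|C_{xy}(i)|\approx p^{-1}$ open pairs inside $U$ are closed, and each such pair can lie in as many cherries as its codegree inside $U$; since $|\Gamma(v)|\approx u$ one cannot rule out sets $U$ (e.g.\ containing whole neighbourhoods) in which these quantities are polynomially larger than the per-step budget $\approx S_\sigma/(u\log n)\approx n^{2/5-o(1)}$ forced by your own union-bound requirement. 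This is not a "rare atypically cherry-rich region" that a Deletion Lemma can excise with high probability — it is a deterministic feature of some of the $\binom{n}{u}$ sets you must cover. The paper's fix is structural: it replaces $U$ by configurations $\Sigma=(U,\Pi)$ with $\Pi=(A,B,C)$ chosen to avoid high-degree-into-$U$ vertices, redefines the tracked quantity ($\parts$, with at most one triple per pair $uv$ and the removal/ignore rules (R2), (R3a), (R3b)) so that one-step changes are provably $\leq k^2p^2n^{-10\epsilon}$, introduces the local bad events $\badE$ to stop tracking a configuration that goes bad, and then proves \emph{deterministically} (Lemma~\ref{lem:dem:config}, Section~\ref{sec:good_configurations_exist}) that every $U$ admits a good configuration. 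None of this is present in your proposal, and without it the martingale increments are too large to beat $\exp(-\omega(u\log n))$.

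Second, your drift computation for the lower bound omits a correction the paper identifies as essential: when the process chooses $e_{i+1}=vw$ to complete a cherry $(u,v,w)$, the same edge may close the pair $uv$ (i.e.\ $uv\in C_{vw}(i)$), in which case no useful open pair is created. To show this loss is negligible one must bound, uniformly over all configurations, the number of such "self-closing" triples; the paper does this via the quadruple count $\Xi_{\Sigma}(i)$, the event $\cQ_i$ of Lemma~\ref{lem:deletion_lemma_subgraph_count_K3edge} (this is the real role of the Deletion Lemma, not the jump control), and the bad event $\cB_{3,i}(\Sigma)$. Relatedly, your $Y_U$ counts cherries "with multiplicity", so even after concentration you would still need a separate argument to convert a cherry count into a count of \emph{distinct} open pairs (the paper sidesteps this by allowing each pair $uv$ into at most one partial triple and by showing few triples are ever ignored, Section~\ref{sec:good_configuration:few_ignored}). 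So the proposal states the correct theorem-level strategy but leaves the genuinely hard parts — uniform jump control via alterations/bad events, the self-closing correction, and multiplicity — unaddressed.
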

As the proof of this result is rather involved, we defer it to Section~\ref{sec:proof}. 
Let us briefly sketch the main ideas for deducing Theorem~\ref{thm:main_result} from Theorem~\ref{thm:k4_many_edges_close_k3}. 
Observe that in the graph produced by the $K_4$-free process the neighbourhood of every vertex has to be triangle-free. 
In order to bound the maximum degree by $u = C n^{3/5}\sqrt[5]{\log n}$, where $C:=\gamma \mu$, it thus suffices to show that whp every set of $u$ vertices contains a triangle. 
Consider a fixed vertex set $U \subseteq [n]$ of size $u$.  
Intuitively, Theorem~\ref{thm:k4_many_edges_close_k3} implies that (after some initial steps) each step creates with reasonable probability a triangle in $U$. 
This suggests that with very high probability $U$ indeed contains a triangle after the first $m$ steps, which essentially suffices to complete the proof (using a union bound argument).  
\begin{proof}[Proof of Theorem~\ref{thm:main_result}]
As mentioned above, we prove the theorem with $C:=\gamma \mu$. Observe that indeed $u = C n^{3/5}\sqrt[5]{\log n}$. 
Given $U \subseteq [n]$ and  $i \leq m$, 
let $\cE_{U,i}$ denote the event that up to step $i$, the set $U$ is triangle-free in the $K_4$-free process. 
In addition, let $\cE_m$ denote the event that there exists a vertex set $U \subseteq [n]$ of size $u$ for which $\cE_{U,m}$ holds.  
Furthermore, for every $i \leq m$ we define the event $\cH_i := \cG_i \cap \cT_i$. 
Note that $\cH_i$ depends only on the first $i$ steps of the $K_4$-free process and furthermore that $\cH_{i+1}$ implies $\cH_i$. 
Now, to complete the proof of the theorem it suffices to show 
\begin{equation}
\label{eq:prob_triangle_free_small}
\begin{split}
\PP[\cE_m \cap \cH_m] = o(1)  .
\end{split}
\end{equation}
Indeed, by Theorems~\ref{thm:BohmanKeevash2010H} and~\ref{thm:k4_many_edges_close_k3} the event $\cH_m$ holds with high probability and thus \eqref{eq:prob_triangle_free_small} implies $\PP[\cE_m] = o(1)$. 
So, with high probability, every set of $u$ vertices contains a triangle and thus the maximum degree in the $K_4$-free process is bounded by $u = C n^{3/5}\sqrt[5]{\log n}$.

In the following we prove \eqref{eq:prob_triangle_free_small} using a union bound argument.
Fix $U \subseteq [n]$ with $|U|=u$, and let $T_U(i) \subseteq O(i)$ denote the open pairs after $i$ steps which would complete at least one copy of a triangle in $U$ if they were added to $G(i)$. 
Then 
\begin{equation}
\label{eq:prob_no_triangle_closed_0}
\begin{split}
\PP[\cE_{U,m} \cap \cH_m] \; &= \; \PP[\cE_{U,n^2p} \cap \cH_{n^2p}] \prod_{n^2p \leq i \leq m-1} \PP[\cE_{U,i+1} \cap \cH_{i+1} \mid \cE_{U,i} \cap \cH_i]\\
&\leq \; \prod_{n^2p \leq i \leq m-1} \PP[e_{i+1} \notin T_U(i) \mid \cE_{U,i} \cap \cH_i]  .
\end{split}
\end{equation}
Note that $\cE_{U,i} \cap \cH_i$ depends only on the first $i$ steps of the process, so given this, the next edge $e_{i+1}$ is chosen uniformly at random from $O(i)$. 
Furthermore, $\cG_i$ implies \eqref{eq:open-estimate}, which using \eqref{eq:K4-functions-estimates} implies $q(t) \geq |O(i)|/n^2$ for $n^2p \leq i \leq m$. 
Hence, writing $t = i/(n^{2}p)$ as usual, on $\cH_i=\cG_i \cap \cT_i$ we have 
\begin{equation}
\label{eq:bound_triangle_edges}
|T_U(i)| \geq \delta u^{3} (tp)^{2} q(t) =  \delta \frac{u^{3} i^{2}}{n^4} q(t) \geq \delta  \frac{u^{3} i^{2}}{n^6} |O(i)|  .
\end{equation} 
As the process fails to choose the next edge $e_{i+1}$ from $T_U(i)$ with probability $1-|T_U(i)|/|O(i)|$, from \eqref{eq:prob_no_triangle_closed_0} and \eqref{eq:bound_triangle_edges} as well as the inequality $1-x \leq e^{-x}$ we deduce that   
\begin{equation}
\label{eq:prob_no_triangle_closed}
\PP[\cE_{U,m} \cap \cH_m] \leq \exp\left\{- \delta \frac{u^{3}}{n^6} \sum_{n^2p \leq i \leq m-1} i^{2}\right\} \leq \exp\left\{- \frac{\delta}{4} \frac{u^{3} m^3}{n^6}\right\}  .
\end{equation}
Substituting the definitions of $m$, $u$, $p$ and $t_{\max}$ into \eqref{eq:prob_no_triangle_closed} we see that 
\[
\PP[\cE_{U,m} \cap \cH_m] \leq \exp\left\{- \frac{\gamma^2\delta}{4} n^2 p^{5} t_{\max}^5  u \right\} = \exp\left\{- \frac{\gamma^2 \delta \mu^5}{4}  u \log n\right\} \leq n^{- 2 u}  ,
\]
where the last inequality follows from the definition of $\gamma$ in \eqref{eq:K4-parameters2}. 
Finally, taking the union bound over all choices of $U \subseteq [n]$ with $|U|=u$ implies \eqref{eq:prob_triangle_free_small}, and, as explained, this completes the proof. 
\end{proof}
Clearly, Theorem~\ref{thm:main_result:ramsey} is an immediate consequence of the above proof (to prove $f_{3,4}(n)<x$ it suffices to construct an $K_4$-free graph on $n$ vertices such that every subset of $x$ vertices contains a copy of $K_3$).  
Note that we did \emph{not} use that $\cE_{U,i}$ holds when establishing \eqref{eq:bound_triangle_edges}. 
With this observation we can rewrite the proof (using stochastic domination and standard Chernoff bounds) in order to show that whp every subset $U \subseteq [n]$ with $|U|=u$ contains not only one, but at least $\Omega(u^3(pt_{\max})^3)$ copies of $K_3$; we leave the details to the interested reader.

\section{Basic density arguments}
\label{sec:density}
In this section we collect some useful properties of the $K_4$-free process (in fact, these also hold for the more general $H$-free process considered in~\cite{BohmanKeevash2010H}). 
Throughout we consider $m$ and $p$ as defined in \eqref{eq:K4-parameters} and $\epsilon$ as chosen in \eqref{eq:K4-constants:Wepsmu}.

\subsection{The occurrence of a set of edges}
\label{sec:density:edges}
Essentially all results in this section are based on the following lemma by Bohman and Keevash~\cite{BohmanKeevash2010H}, which also holds for the $H$-free process whenever $H$ is strictly $2$-balanced.  
Intuitively, it states that the probability that some set of edges is present in $G(m)$ is `comparable' to that in the binomial model $G_{n,p}$, where $m$ and $p$ are defined as in~\eqref{eq:K4-parameters} and $\epsilon>0$ is chosen as in~\eqref{eq:K4-constants:Wepsmu}. 
\begin{lemma}%
\label{lem:small_subgraph}%
{\normalfont\cite[Lemma~$4.1$]{BohmanKeevash2010H}} 
For any set of edges $F \subseteq \binom{[n]}{2}$, the probability that $\cG_m$ holds and $F \subseteq E(m)$ is at most $\left(p n^{2\epsilon}\right)^{|F|}$.
\end{lemma}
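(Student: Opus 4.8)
The plan is to expose the $K_4$-free process edge by edge, using that conditionally on the first $i$ steps the edge $e_{i+1}$ is uniform on $O(i)$, whose size is bounded below by the event $\cG_i$. Write $k := |F|$ and let $\cF_i$ denote the $\sigma$-algebra generated by the first $i$ steps. As a preliminary I would note that on $\cG_i$ the estimate~\eqref{eq:open-estimate}, combined with $s_e = n^{1/12-\epsilon}$, $f(t) \le n^{\epsilon}$ and $q(t) \ge n^{-\epsilon/2}$ from~\eqref{eq:K4-functions-estimates} (and $t = t(i) \le t_{\max}$ whenever $i \le m$), gives $|O(i)| \ge \tfrac14 q(t) n^2 \ge n^{2-\epsilon}$ once $n$ is large; hence on $\cG_i$ we have $\PP[e_{i+1} = e \mid \cF_i] \le n^{-2+\epsilon}$ for every pair $e$.

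Next, if $\cG_m$ holds and $F \subseteq E(m)$ then there is a unique injection $\phi\colon F \hookrightarrow \{1,\dots,m\}$ recording at which step each edge of $F$ was added, so
\[
\PP[\cG_m,\, F \subseteq E(m)] \;\le\; \sum_{\phi\colon F \hookrightarrow [m]} \PP\bigl[\cG_m,\ e_{\phi(f)} = f \text{ for all } f \in F\bigr],
\]
a sum of at most $m^k$ terms. For a fixed $\phi$ with image $i_1 < \cdots < i_k$, I would peel off the steps from the top: since $\cG_j$ is decreasing in $j$ we have $\indic{\cG_m} \le \indic{\cG_{i_k-1}}$, and both $\cG_{i_k-1}$ and the events $\{e_{i_l}=f_l\}$ with $l<k$ are $\cF_{i_k-1}$-measurable, so conditioning on $\cF_{i_k-1}$ and invoking the preliminary bound extracts a factor $n^{-2+\epsilon}$; iterating downward through $i_{k-1},\dots,i_1$ gives $\PP[\cG_m,\ e_{\phi(f)}=f\ \forall f] \le n^{(-2+\epsilon)k}$. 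Combining the two displays yields $\PP[\cG_m,\, F\subseteq E(m)] \le (m\,n^{-2+\epsilon})^k$, and substituting $m = \mu n^{8/5}\sqrt[5]{\log n}$ and $p = n^{-2/5}$ gives $m\,n^{-2+\epsilon} = \mu \sqrt[5]{\log n}\; n^{\epsilon} p \le n^{2\epsilon} p$ for $n$ large, which is exactly $(p n^{2\epsilon})^{|F|}$. (As elsewhere, the statement is read for $n$ sufficiently large.)

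The argument is short; the only points needing attention are bookkeeping ones. One must set up the telescoping conditional expectations with the right $\sigma$-algebras so that no factor is lost when conditioning on $\cG_{i_j-1}$ to use the lower bound on $|O(i_j-1)|$, and one should check that $3f(t)/s_e = o(1)$ and $q(t)\ge n^{-\epsilon/2}$ hold uniformly on $0\le t\le t_{\max}$ (both are immediate from~\eqref{eq:K4-functions-estimates} and $\epsilon \le 1/1000$). There is ample slack in the constants, since $m\,n^{-2+\epsilon}$ is smaller than $n^{2\epsilon}p$ by a factor $n^{\epsilon - o(1)}$, so no optimisation is required.
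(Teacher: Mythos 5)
Your proof is correct and follows essentially the same route as the paper, which does not reprove this lemma but cites Bohman--Keevash (Lemma~4.1) and remarks that the argument only needs the lower bound $|O(i)| > n^{2-\epsilon}/2$ on the number of open pairs — precisely the ingredient you extract from \eqref{eq:open-estimate} and \eqref{eq:K4-functions-estimates}. Your step-by-step conditioning, the union bound over the at most $m^{|F|}$ placements of $F$ among the steps, and the final comparison $m\,n^{-2+\epsilon} \leq p\,n^{2\epsilon}$ (for $n$ large, which is how the statement is meant to be read) is exactly the cited argument adapted to the simplified event $\cG_i$.
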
 
We remark that the proof given in~\cite{BohmanKeevash2010H} remains valid with our simpler definition of $\cG_i$, as it only uses that the number of open pairs is large, say $|O(i)| > n^{2-\epsilon}/2$, which readily follows from  \eqref{eq:K4-functions-estimates} and \eqref{eq:open-estimate}.

\subsection{The number of edges between two sets}
The next lemma essentially gives reasonable upper bounds on the number of edges between two (not necessarily disjoint) sets, and it is an easy consequence of Lemma~$4.2$ in~\cite{BohmanKeevash2010H}. 
\begin{lemma}%
\label{lem:edges_bounded}%
{\normalfont\cite[Lemma~$4.2$]{BohmanKeevash2010H}}
Let $\cD_i$ denote the event that for all $a,b \geq 1$ and every $A, B \subseteq [n]$ with $|A|=a$ and $|B|=b$, in $G(i)$ we have $e(A,B) < \max\{4\epsilon^{-1}(a+b),pabn^{2\epsilon}\}$.
Then the probability that $\cG_m$ holds and $\cD_m$ fails is $o(n^{-1})$. 
\end{lemma}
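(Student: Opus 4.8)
The plan is to deduce this from Lemma 4.2 in \cite{BohmanKeevash2010H} (quoted here as Lemma~\ref{lem:small_subgraph} in the form that any fixed edge set $F$ appears in $E(m)$ with probability at most $(pn^{2\epsilon})^{|F|}$ on $\cG_m$), combined with a union bound over all pairs of sets $A,B$ and all candidate edge sets witnessing a violation. First I would fix $a,b\ge 1$ and sets $A,B\subseteq[n]$ with $|A|=a$, $|B|=b$, and set $k := \lceil \max\{4\epsilon^{-1}(a+b), pabn^{2\epsilon}\}\rceil$. If $e(A,B)\ge k$ on $\cG_m$, then there is some set $F$ of exactly $k$ edges, each with one endpoint in $A$ and the other in $B$, with $F\subseteq E(m)$. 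There are at most $\binom{ab}{k}$ such edge sets $F$ (each edge lies among the at most $ab$ pairs between $A$ and $B$), so by Lemma~\ref{lem:small_subgraph} and a union bound,
\[
\PP[\cG_m \text{ and } e(A,B)\ge k] \;\le\; \binom{ab}{k}\left(pn^{2\epsilon}\right)^{k} \;\le\; \left(\frac{eab\, p n^{2\epsilon}}{k}\right)^{k}.
\]
The key point is the choice of $k$: since $k \ge pabn^{2\epsilon}$, the base $eabpn^{2\epsilon}/k$ is at most $e$, which is not yet $<1$, so I would instead exploit the \emph{other} lower bound $k \ge 4\epsilon^{-1}(a+b)$. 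Write $\left(\frac{eab\,pn^{2\epsilon}}{k}\right)^{k} \le e^{k}\cdot (k/(eab\,p n^{2\epsilon}))^{-k}$; using $k\ge pabn^{2\epsilon}$ gives $\le e^k \cdot(\text{something}\le e)^{k}$ — not enough — so the cleaner route is: split as $\left(\frac{eab p n^{2\epsilon}}{k}\right)^k = \left(\frac{eabpn^{2\epsilon}}{k}\right)^{k/2}\left(\frac{eabpn^{2\epsilon}}{k}\right)^{k/2}$, bound the first factor by $e^{k/2}\cdot (abpn^{2\epsilon}/k)^{k/2}\le e^{k/2}$ (since $k\ge abpn^{2\epsilon}$), and the second factor by $(e \cdot n^{2\epsilon}\cdot \text{const})^{k/2}$ absorbed against $k\ge 4\epsilon^{-1}(a+b)$, which forces a factor like $n^{-\Omega(\epsilon(a+b))}$ or at least $n^{-2(a+b)}$ after using $n^{2\epsilon}\le$ appropriate powers; the upshot is a bound of the form $n^{-c(a+b)}$ for a constant $c$ that can be made as large as we like (in particular $\ge 3$) by the constraints $\epsilon\le 1/1000$ in \eqref{eq:K4-constants:Wepsmu} and $n$ large.

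With a per-pair bound of $n^{-c(a+b)}$ in hand, I would sum over all choices: the number of pairs $(A,B)$ with $|A|=a,|B|=b$ is at most $\binom{n}{a}\binom{n}{b}\le n^{a+b}$, and then summing over all $a,b\ge 1$,
\[
\PP[\cG_m \text{ and } \cD_m \text{ fails}] \;\le\; \sum_{a\ge 1}\sum_{b\ge 1} n^{a+b}\cdot n^{-c(a+b)} \;=\; \sum_{a\ge 1}\sum_{b\ge 1} n^{-(c-1)(a+b)},
\]
which for $c\ge 3$ is a convergent geometric-type sum of order $O(n^{-2(c-1)}) = o(n^{-1})$, as claimed. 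One technical care point is that $a$ or $b$ may be as large as $n$, but that only makes the exponent more negative, so the sum is dominated by the $a=b=1$ term; another is handling the ceiling in the definition of $k$, which only helps.

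The main obstacle — really the only subtle point — is the bookkeeping in the second paragraph: one must be careful to use \emph{both} lower bounds on $k$ (the $pabn^{2\epsilon}$ bound kills the combinatorial factor $\binom{ab}{k}$, and the $4\epsilon^{-1}(a+b)$ bound supplies the spare $n^{-\Omega(a+b)}$ needed to beat the $n^{a+b}$ enumeration of set pairs), and to verify that the constants \eqref{eq:K4-constants:Wepsmu} make the final exponent strictly less than $-1$. Everything else is a routine union bound built directly on Lemma~\ref{lem:small_subgraph}.
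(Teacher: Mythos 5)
There is a genuine gap, and it sits exactly at the step you flag as "the only subtle point." From Lemma~\ref{lem:small_subgraph} you only get a per-edge probability of $pn^{2\epsilon}$, while the density threshold in the lemma is $pabn^{2\epsilon}$ — the \emph{same} power of $n^{\epsilon}$. Consequently your per-pair estimate $\binom{ab}{k}(pn^{2\epsilon})^{k}\le\bigl(eabpn^{2\epsilon}/k\bigr)^{k}$ has a base that is only guaranteed to be at most $e$; there is no negative power of $n$ anywhere in the expression, and splitting the $k$-th power into two halves cannot create one, since both halves have the same base. Your claim that the second factor is $\bigl(e\,n^{2\epsilon}\cdot\mathrm{const}\bigr)^{k/2}$ and gets "absorbed against $k\ge 4\epsilon^{-1}(a+b)$, forcing a factor like $n^{-\Omega(\epsilon(a+b))}$" is backwards: such a factor is \emph{large}, of order at least $n^{\epsilon k}\ge n^{4(a+b)}$. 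The failure is not just bookkeeping: in the crossover regime $a=b\approx 8\epsilon^{-1}p^{-1}n^{-2\epsilon}$ (a legitimate choice, of order $n^{2/5-2\epsilon}$) one has $k\approx pabn^{2\epsilon}\approx 4\epsilon^{-1}(a+b)$, and then $\binom{ab}{k}(pn^{2\epsilon})^{k}\ge(abpn^{2\epsilon}/k)^{k}\ge 1$, so the first-moment bound carries no information at all before you even multiply by the $\binom{n}{a}\binom{n}{b}$ choices of $(A,B)$. Thus the union bound as you set it up cannot deliver the claimed $o(n^{-1})$, let alone a per-pair bound $n^{-c(a+b)}$ with $c\ge 3$.

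What the argument needs is an $n^{\Omega(\epsilon)}$ gap between the per-edge probability and the threshold. This is what Bohman and Keevash's own Lemma~$4.1$ provides (morally the per-edge probability is about $m/\min_i|O(i)| = O\bigl(pn^{\epsilon}(\log n)^{1/5}\bigr)$, well below $pn^{2\epsilon}$); with a per-edge bound of, say, $pn^{3\epsilon/2}$ the base becomes at most $en^{-\epsilon/2}\le n^{-\epsilon/4}$, and then $k\ge 4\epsilon^{-1}(a+b)$ gives a per-pair bound $n^{-(a+b)}$ or better, after which your enumeration over $A,B$ and the sum over $a,b\ge 1$ go through exactly as you wrote them. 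Note also that the paper does not reprove this lemma at all: it quotes Lemma~$4.2$ of~\cite{BohmanKeevash2010H} and merely remarks that the proof there (which uses their sharper subgraph-probability estimate, not the weakened form recorded here as Lemma~\ref{lem:small_subgraph}) remains valid with the simplified event $\cG_i$. So your overall plan — witness set $F$ of size equal to the threshold, Lemma-4.1-type bound, union bound over $F$, $A$, $B$, $a$, $b$ — is indeed the standard route, but to make it correct you must either invoke the stronger per-edge estimate or build the $n^{\epsilon}$ slack into the threshold; with $2\epsilon$ on both sides the computation does not close.
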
 
With a similar reasoning as above, the proof given in~\cite{BohmanKeevash2010H} also works with our simpler version of $\cG_i$.

\subsection{Vertices which have many neighbours in some set}
Loosely speaking, the following lemma bounds the number of vertices which have many neighbours in some set $A$. 
It is a straightforward modification of Lemma~$4.3$ in~\cite{BohmanKeevash2010H}, taking into account that the `high degree' vertices may also lie in $A$. 
\begin{lemma}%
\label{lem:large_degree_bounded}%
For $A \subseteq [n]$ and $d \geq 1$, let $D_{A,d} \subseteq [n]$ denote the set of vertices which have at least $d$ neighbours in $A$. 
Let $\cN_i$ denote the event that for all $a \geq 1$ and $d \geq \max\{16\epsilon^{-1}, 2apn^{2\epsilon}\}$, in $G(i)$ we have $|D_{A,d}| < 16\epsilon^{-1}d^{-1}a$ for every $A \subseteq [n]$ with $|A| = a$.  
Then $\cD_i$ implies $\cN_i$.
\end{lemma}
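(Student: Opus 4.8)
The plan is to show that $\cN_i$ is implied by $\cD_i$ by a direct counting argument, contrapositively: if $\cN_i$ fails, then there exist $a \geq 1$, $d \geq \max\{16\epsilon^{-1}, 2apn^{2\epsilon}\}$ and a set $A \subseteq [n]$ with $|A|=a$ such that $|D_{A,d}| \geq 16\epsilon^{-1}d^{-1}a$, and I would derive from this a set-pair for which $e(A,B)$ is too large, contradicting $\cD_i$. The natural choice is $B := D_{A,d}$, with $b := |B|$. Every vertex of $B$ has, by definition, at least $d$ neighbours in $A$, so counting edges incident to $B$ with at least one endpoint in $A$ gives $e(A,B) \geq db/2$ (the factor $1/2$ is a safe lower bound accounting for the fact that an edge inside $A \cap B$ is counted only once in $e(A,B)$; in fact one can be more careful, but $db/2$ suffices).

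Next I would check that this lower bound $db/2$ beats the threshold $\max\{4\epsilon^{-1}(a+b),\, pabn^{2\epsilon}\}$ appearing in $\cD_i$, which would force $\cD_i$ to fail. For the second term: since $d \geq 2apn^{2\epsilon}$, we have $db/2 \geq apbn^{2\epsilon} = pabn^{2\epsilon}$, so the $pabn^{2\epsilon}$ branch is handled. For the first term, I need $db/2 \geq 4\epsilon^{-1}(a+b)$, i.e.\ $db \geq 8\epsilon^{-1}(a+b)$. Using $d \geq 16\epsilon^{-1}$ gives $db \geq 16\epsilon^{-1}b \geq 8\epsilon^{-1}b + 8\epsilon^{-1}b$, and I just need the remaining $8\epsilon^{-1}b \geq 8\epsilon^{-1}a$, i.e.\ $b \geq a$. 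This is exactly where the hypothesis $b = |D_{A,d}| \geq 16\epsilon^{-1}d^{-1}a$ comes in: combined with $d \geq 16\epsilon^{-1}$ it gives $b \geq 16\epsilon^{-1}d^{-1}a \cdot$, wait — this gives $b \geq a$ only if $16\epsilon^{-1}/d \geq 1$, which fails for large $d$. So the bound $b \geq a$ need not hold, and I must instead argue directly: from $b \geq 16\epsilon^{-1}d^{-1}a$ we get $db \geq 16\epsilon^{-1}a$, hence $db/2 \geq 8\epsilon^{-1}a$; and separately $db/2 \geq (16\epsilon^{-1}/2)b = 8\epsilon^{-1}b$; adding these two estimates (each of which is at most $db/2$, so their sum is at most $db$) yields $db \geq 8\epsilon^{-1}(a+b)$, i.e.\ $db/2 \geq 4\epsilon^{-1}(a+b)$, as needed.

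Putting the two cases together, $e(A,B) \geq db/2 \geq \max\{4\epsilon^{-1}(a+b),\, pabn^{2\epsilon}\}$, contradicting the definition of $\cD_i$ (which asserts a strict inequality $e(A,B) < \max\{\dots\}$ for all such $A,B$). Hence $\cD_i$ implies $\cN_i$. The main obstacle — really the only subtle point — is the bookkeeping in the previous paragraph: one must not carelessly assume $b \geq a$, but instead split $db/2$ into two halves and bound each half against $a$ and against $b$ respectively, which is exactly what the two lower bounds $d \geq 16\epsilon^{-1}$ and $b \geq 16\epsilon^{-1}d^{-1}a$ are designed to supply. I would also double-check the edge-counting constant: if one worries about edges within $A \cap B$ being undercounted, note that each vertex $v \in B$ contributes $\geq d$ edges to $A$, the total over $v \in B$ is $\geq db$, and each edge of $G(i)[A \cup B]$ with an endpoint in $A$ is counted at most twice in this sum, so $e(A,B) \geq db/2$; this is all that is used.
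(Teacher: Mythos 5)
Your proposal is correct and follows essentially the same argument as the paper: take $B=D_{A,d}$, use $e(A,B)\geq db/2$, and beat both branches of the $\cD_i$ threshold, with $d\geq 2apn^{2\epsilon}$ handling $pabn^{2\epsilon}$ and the combination of $d\geq 16\epsilon^{-1}$ and $b\geq 16\epsilon^{-1}d^{-1}a$ handling $4\epsilon^{-1}(a+b)$ (the paper splits as $e(A,B)-4\epsilon^{-1}b\geq db/4\geq 4\epsilon^{-1}a$, which is the same bookkeeping in slightly different form). Your care about edges inside $A\cap B$ and about not assuming $b\geq a$ is exactly the right level of attention, and no gap remains.
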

\begin{proof}
We closely follow the proof of Lemma~$4.3$ in~\cite{BohmanKeevash2010H}. 
Suppose $\cD_i$ holds. Pick $A \subseteq [n]$ with $|A|=a \geq 1$ and set $B=D_{A,d}$.
Suppose $|B|=b \geq \lceil 16\epsilon^{-1}d^{-1}a \rceil$. 
Since $e(A,B) \geq db/2$ and $d \geq 16\epsilon^{-1}$, we have $e(A,B) - 4\epsilon^{-1}b \geq db/4 \geq 4\epsilon^{-1}a$.
Furthermore, $d \geq 2apn^{2\epsilon}$ implies $e(A,B) \geq db/2 \geq pabn^{2\epsilon}$.
To summarize, we have $e(A,B) \geq \max\{4\epsilon^{-1}(a+b),pabn^{2\epsilon}\}$, which contradicts $\cD_i$. 
\end{proof}

\subsection{Disjoint pairs which each have very many common neighbours in some set} 
Intuitively, the following lemma states that the number of disjoint vertex pairs, where each pair has very many common neighbours in some set, is not too large. 
\begin{lemma}%
\label{lem:codegree_into_set_bounded}%
Let $\cM_i$ denote the event that for all $a \geq 1$ and $d \geq  \max\{300\epsilon^{-1}, ap^{2}n^{5\epsilon}, \epsilon^{-1/2} \sqrt{ap} n^{2\epsilon}\}$, for every $A \subseteq [n]$ with $|A| = a$, in $G(i)$  
the size of any set $C$ of disjoint vertex pairs with $|\Gamma(x) \cap \Gamma(y) \cap A| \geq d$ for all $xy \in C$ is at most $30\epsilon^{-1}d^{-1}a$. 
Then the probability that $\cG_m \cap \cD_m$ holds and $\cM_m$ fails is $o(n^{-1})$.
\end{lemma}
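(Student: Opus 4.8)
\textbf{Proof proposal for Lemma~\ref{lem:codegree_into_set_bounded}.}
The plan is to mimic the proof of Lemma~\ref{lem:large_degree_bounded}, but since we are now counting disjoint \emph{pairs} that have many common neighbours, the natural auxiliary graph to control is the count of cherries (paths of length two) between two sets, rather than the count of edges. First I would condition on $\cG_m \cap \cD_m$ and suppose, for contradiction, that $\cM_m$ fails: there exist $a \geq 1$, a value $d$ in the stated range, a set $A$ with $|A|=a$, and a collection $C$ of pairwise disjoint pairs $xy$ with $|\Gamma(x)\cap\Gamma(y)\cap A|\geq d$, where $|C| > 30\epsilon^{-1}d^{-1}a$. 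Let $B$ be the set of all endpoints of pairs in $C$, so $|B|=2|C|=:b$. Counting cherries with centre in $A$ and both ends in $B$, each pair in $C$ contributes at least $\binom{d}{2} \geq d^2/3$ such cherries (for $d$ large), so the number of cherries with centre in $A$ and ends in $B$ is at least $|C| d^2/3$. On the other hand, a cherry with centre $w \in A$ and ends in $B$ is determined by $w$ together with two of its neighbours in $B$, so this count is at most $\sum_{w\in A}\binom{|\Gamma(w)\cap B|}{2} \leq \frac{1}{2}\big(\max_{w\in A}|\Gamma(w)\cap B|\big)\cdot e(A,B)$.

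Next I would bound the two factors on the right using the hypotheses. The quantity $e(A,B)$ is controlled directly by $\cD_m$: $e(A,B) < \max\{4\epsilon^{-1}(a+b),\, pabn^{2\epsilon}\}$. To bound $\max_{w\in A}|\Gamma(w)\cap B|$ I would use Lemma~\ref{lem:small_subgraph} (the occurrence-of-edges lemma) together with a union bound, or alternatively Lemma~\ref{lem:edges_bounded} applied to singletons, to see that on $\cG_m$ no vertex has more than, say, $O(pbn^{2\epsilon}) + O(\epsilon^{-1})$ neighbours in $B$ — more precisely I would want a bound like $|\Gamma(w)\cap B| \leq \max\{c\epsilon^{-1}, c p b n^{2\epsilon}\}$ holding simultaneously for all $w$ and all sets $B$ of the relevant size, which is exactly the kind of statement that follows from $\cD_m$ applied with $A=\{w\}$ (so $a=1$) — hence this is already contained in $\cD_m$ and no extra high-probability event is needed beyond $\cG_m\cap\cD_m$. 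Wait: but $\cD_m$ as stated requires $a,b\geq 1$ and gives $e(\{w\},B) < \max\{4\epsilon^{-1}(1+b), pbn^{2\epsilon}\}$, which is a usable bound on $|\Gamma(w)\cap B|$. Combining, the upper bound on the cherry count is at most roughly $\frac{1}{2}\max\{4\epsilon^{-1}(1+b), pbn^{2\epsilon}\}\cdot\max\{4\epsilon^{-1}(a+b), pabn^{2\epsilon}\}$, and I would plug in $b = 2|C| \geq 60\epsilon^{-1}d^{-1}a$ and compare with the lower bound $|C|d^2/3$ to reach a contradiction.

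The crux is the arithmetic of which term dominates in each max, and checking that the three lower bounds imposed on $d$ — namely $d \geq 300\epsilon^{-1}$, $d \geq ap^2n^{5\epsilon}$, and $d \geq \epsilon^{-1/2}\sqrt{ap}\,n^{2\epsilon}$ — are precisely what is needed to kill every case. Roughly: the condition $d \geq 300\epsilon^{-1}$ handles the regime where both maxima are realized by their $\epsilon^{-1}$ terms (so the upper bound is $O(\epsilon^{-2}b^2)$ and must be beaten by $|C|d^2 \sim b d^2$, i.e.\ $d^2 \gg \epsilon^{-2} b$, which since $b \leq O(\epsilon^{-1}d^{-1}a)$ reduces to $d^3 \gg \epsilon^{-3}a$ — here I may actually need to be a little more careful and note $b$ cannot be too large either, using that $b\leq n$ and the other $d$-bounds); the condition $d \geq ap^2n^{5\epsilon}$ handles the regime where both maxima are realized by their $p\cdot(\text{stuff})\cdot n^{2\epsilon}$ terms; and $d \geq \epsilon^{-1/2}\sqrt{ap}\,n^{2\epsilon}$ handles the two mixed regimes. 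The main obstacle I anticipate is exactly this case analysis: juggling the two nested maxima while $b$ itself depends on $d$ and $a$ makes it easy to miss a subcase, so I would organize it by first substituting the bound $b \leq 60\epsilon^{-1}d^{-1}a$ into everything (note we may assume $|C|$ equals its minimal violating value), reducing every inequality to a relation purely among $a$, $d$, $p$, $\epsilon$ and $n^\epsilon$, and then verifying each of the (at most four) resulting inequalities against one of the three hypothesized lower bounds on $d$, possibly after observing that $apn^{2\epsilon} < 1$ or $\geq 1$ splits things cleanly. A clean way to present it is to set $d_0 := \max\{300\epsilon^{-1}, ap^2n^{5\epsilon}, \epsilon^{-1/2}\sqrt{ap}\,n^{2\epsilon}\}$ and prove the chain of inequalities once, for $d = d_0$, then note monotonicity in $d$ makes the general case immediate.
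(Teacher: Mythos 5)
Your plan has a structural problem, not just a computational one: it is purely deterministic. You assume the conclusions of $\cG_m\cap\cD_m$ and try to refute any oversized family $C$ by double counting, which would prove the containment $\cG_m\cap\cD_m\subseteq\cM_m$; but the lemma is deliberately stated as a probability bound (contrast Lemma~\ref{lem:large_degree_bounded}, which \emph{is} a deterministic implication), and the counting cannot be closed. Test your inequalities in the regime $ap\gg\epsilon^{-1}$, say $a=n^{3/5}$, $d=\epsilon^{-1/2}\sqrt{ap}\,n^{2\epsilon}\approx n^{1/10+2\epsilon}$, $r=|C|\approx 30\epsilon^{-1}d^{-1}a\approx n^{1/2-2\epsilon}$, $b=2r$. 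First, the guaranteed number of cherries with centre in $A$ and ends forming a pair of $C$ is $d$ per pair, not $\binom{d}{2}$ (the $\binom{d}{2}$ count belongs to the other orientation, centre at an endpoint and both ends in $A$), so the lower bound is $rd\approx n^{3/5}$; since the pairs of $C$ are disjoint, the upper bound this orientation yields is essentially $\tfrac12 e(A,B)<\tfrac12\max\{4\epsilon^{-1}(a+b),\,pabn^{2\epsilon}\}\approx parn^{2\epsilon}\approx n^{7/10+2\epsilon}$, and there is no contradiction: you would need the hypotheses to force $d\gtrsim apn^{2\epsilon}$, whereas they only give $d\geq\max\{ap^2n^{5\epsilon},\epsilon^{-1/2}\sqrt{ap}\,n^{2\epsilon}\}$, which is far smaller once $ap$ is large. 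Switching orientation does not rescue it, because your auxiliary bound $\max_{w\in A}|\Gamma(w)\cap B|<\max\{4\epsilon^{-1}(1+b),pbn^{2\epsilon}\}$ (that is, $\cD_m$ applied to a singleton) is vacuous — the additive term $4\epsilon^{-1}(1+b)$ always dominates since $pn^{2\epsilon}=o(1)$ — so the upper bound becomes roughly $\epsilon^{-1}b\,e(A,B)\approx n^{6/5}$ against a lower bound $rd^2\approx n^{7/10}$. Indeed, degree, codegree and pairwise edge-count constraints alone do not forbid about $n^{1/2}$ disjoint pairs each with $n^{1/10+2\epsilon}$ common neighbours in a set of size $n^{3/5}$ (one can sketch such a graph with nearly disjoint neighbourhood sets, all within the allowed ranges), so no argument of this shape can succeed.

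The missing ingredient is the probabilistic one, and it is the heart of the paper's proof: one bounds the probability that $G(m)$ contains the offending structure at all, via Lemma~\ref{lem:small_subgraph} and a union bound over a subfamily of about $r/2$ pairs together with a prescribed set of about $d/2$ common neighbours for each pair (the edge sets being disjoint); this is exactly where $d\geq ap^2n^{5\epsilon}$ is used, giving probability at most $n^{-\epsilon dr/6}\leq n^{-2(a+1)}$, small enough to sum over $a$, $d$ and $A$. The event $\cD_m$ is used deterministically only for the degenerate subcase in which the $\geq d/2$ common neighbours lie inside the set $A_C$ of endpoints of $C$ (where the prescribed edges are not disjoint from $C$ and the union bound is unavailable): there $e(A_C)\geq rd/16$ contradicts $\cD_m$, and that is precisely where the third hypothesis $d\geq\epsilon^{-1/2}\sqrt{ap}\,n^{2\epsilon}$ enters. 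Without this probabilistic step your case analysis cannot be completed, so the proposal as written does not prove the lemma.
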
 
Note that we allow the vertex pairs to intersect with $A$.
This produces some mild technical difficulties, but we overcome these using the `larger' lower bound $d \geq \epsilon^{-1/2} \sqrt{ap} n^{2\epsilon}$ on the codegree. 
\begin{proof} 
We first fix $1 \leq a \leq n$, $A \subseteq [n]$ with $|A|=a$ and $d \geq \max\{300\epsilon^{-1}, ap^{2}n^{5\epsilon}, \epsilon^{-1/2} \sqrt{ap} n^{2\epsilon}\}$, where $a,d$ are integers.   
Set $r := \lceil 30\epsilon^{-1}d^{-1}a \rceil$. It henceforth suffices to consider the case $d \leq a$, otherwise the claim is trivial. 
Assuming that $\cG_m \cap \cD_m$ holds, we now estimate the probability that there exists a set $C$ of disjoint vertex pairs with $|C|=r$, where in $G(m)$ each pair in $C$ has at least $d$ common neighbours in $A$. 
In the following we distinguish several cases, where $A_C$ denotes all vertices of $A$ which are contained in some pair in $C$.

First, suppose there exists $C_1 \subseteq C$ of size $\lceil r/2 \rceil$ in which each pair has at least $\lceil d/2 \rceil$ common neighbours in $A \setminus A_{C}$. 
To bound the probability of this event, we first use a union bound to account for all possible  $C_1$ of size $\lceil r/2 \rceil$ and choices of the $\lceil d/2 \rceil$ common neighbours $N_{xy}$ in $A \setminus A_{C}$ for each pair $xy \in C_1$, and then use Lemma~\ref{lem:small_subgraph} to bound the probability that $G(m)$ contains all the required edges, i.e., $F = \bigcup_{xy \in C_1} \{x,y\} \times N_{xy}$. 
Since by construction $|F|=2 \lceil d/2 \rceil\lceil r/2 \rceil$, whenever $\cG_m$ holds the probability of this case is bounded by 
\[
\binom{n^2}{\lceil r/2 \rceil} {\binom{a}{\lceil d/2 \rceil}}^{\lceil r/2 \rceil} \left(p n^{2 \epsilon}\right)^{2 \lceil d/2 \rceil\lceil r/2 \rceil} \leq n^{3r} \left(\frac{2 e a p^2 n^{4 \epsilon}}{d}\right)^{dr/4} \leq n^{(3-\epsilon d/5)r} \leq n^{-\epsilon dr/6}  \leq n^{-2(a+1)}  ,
\] 
where we used $\binom{x}{y} \leq (ex/y)^{y}$ as well as $d \geq ap^{2}n^{5\epsilon}$, $\epsilon d \geq 300$ and $\epsilon d r \ge 30 a$.

Second, assume there exists $C_2 \subseteq C$ of size $\lceil r/2 \rceil$ in which each pair has at least $\lceil d/2 \rceil$ common neighbours in $A_{C} \subseteq A$.
If there exists $C_3 \subseteq C_2$ of size $\lceil r/4 \rceil$ in which all pairs are outside of $A$, then for every pair $xy \in C_3$ its at least $d$ common neighbours in $A$ are (trivially) disjoint from $C_3$. 
So, with similar reasoning as above, whenever $\cG_m$ holds this occurs with probability at most  
\[
\binom{n^2}{\lceil r/4 \rceil} {\binom{a}{d}}^{\lceil r/4 \rceil} \left(p n^{2 \epsilon}\right)^{2 d \lceil r/4 \rceil} \leq n^{r} \left(\frac{e a p^2 n^{4 \epsilon}}{d}\right)^{d r/4} \leq n^{(1-\epsilon d/5)r} \leq n^{-2(a+1)}  .
\] 
Otherwise there exists $C_4 \subseteq C_2$ of size $\lceil r/4 \rceil$, in which each pair has at least one vertex in $A_C$ and at least $\lceil d/2 \rceil$ common neighbours in $A_C$.
But then $e(A_{C}) \geq rd/16$, so $d \geq 300\epsilon^{-1}$ implies $e(A_{C}) > 16\epsilon^{-1}r$. 
Using $r \leq  60\epsilon^{-1}d^{-1}a$ and $d \geq \epsilon^{-1/2} \sqrt{ap} n^{2\epsilon}$ we see that $d/r \geq \epsilon (60a)^{-1} d^2 \geq (60)^{-1}pn^{4\epsilon}$, thus $e(A_{C}) \geq rd/16 > 4 r^2pn^{2\epsilon}$. 
To sum up,  $e(A_{C}) > \max\{16\epsilon^{-1}r,4r^{2}pn^{2\epsilon}\}$, which contradicts $\cD_m$ because of $|A_{C}| \leq 2r$, so this case can not occur.

Finally, taking the union bound over all choices of $a$, $d$ and $A$ implies 
\[
\PP[ \cG_m \cap \cD_m \cap \neg\cM_m ] \leq \sum_{a \geq 1} n \binom{n}{a} 2 n^{-2(a+1)} = o(n^{-1})  ,
\]
as required. 
\end{proof}

\subsection{Deletion Lemma} 
In our proof we need good exponential upper-tail bounds on the probability that some subset of the vertices contains `too many' copies of some graph $F$.
Unfortunately, even in $G_{n,p}$ this probability is often not as small as it would need to be in order to apply union bounds, see e.g.~\cite{JansonOleszkiewiczRucinski2004SubgraphCounts,JansonRucinski2004UpperTail}.
However, R{\"o}dl and Ruci\'nski showed in~\cite{RoedlRucinski1995} that for $G_{n,p}$ such bounds can be obtained if we allow for deleting a few edges; this is usually referred to as the Deletion Lemma~\cite{JansonRucinski2004Deletion}. 
In the following we extend the classical proof to our scenario at the cost of obtaining slightly worse bounds.
As this lemma may be of independent interest we state it in a slightly more general form than needed for our purposes. 
As usual, here $\epsilon>0$ is any constant for which Lemma~\ref{lem:small_subgraph} holds.  
\begin{lemma}[`Deletion Lemma']%
\label{lem:deletion_lemma}%
Suppose $\ell \geq 1$ and that $\cS$ is a family of $\ell$-element subsets from $\binom{[n]}{2}$. 
We set  $\mu' := |\cS| p^{\ell} n^{2\ell\epsilon}$ and say that a graph $G$ \emph{contains} $\alpha \in \cS$ if all the edges of $\alpha$ are present in $G$. 
Let $\cD\cL_i(d,k)$ denote the event that there exists a set $\cI_0 \subseteq \cS$ with $|\cI_0| \leq d$ such that, setting $E_0 := \bigcup_{\alpha \in \cI_0} \alpha \subseteq \binom{[n]}{2}$, the graph $G(i) \setminus E_0$ contains at most $\mu'+k$ elements from $\cS$. 
Then for every $d,k>0$ the probability that $\cG_m$ holds and $\cD\cL_m(d,k)$ fails is at most
\begin{equation*}
\label{eq:lem:deletion_lemma}
\left(1+\frac{k}{\mu'}\right)^{-d} \leq \exp\left\{-\frac{d k}{\mu'+k}\right\}  . 
\end{equation*}
\end{lemma}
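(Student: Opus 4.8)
The plan is to mimic the classical R\"odl--Ruci\'nski deletion argument, with Lemma~\ref{lem:small_subgraph} playing the role that independence (or the FKG inequality) plays in the $G_{n,p}$ setting. Fix $d,k>0$. The key object is a maximal (or greedy) collection of elements of $\cS$ that are present in $G(m)$ and pairwise edge-disjoint \emph{apart from a small overlap}; more precisely, I would argue by the following dichotomy. Suppose $\cG_m$ holds but $\cD\cL_m(d,k)$ fails. Consider repeatedly picking an element $\alpha\in\cS$ that is contained in $G(m)$ and removing its edges; after removing the edge sets of at most $d$ chosen elements $\alpha_1,\dots,\alpha_j$ (with $j\le d$), the graph $G(m)\setminus(\alpha_1\cup\cdots\cup\alpha_j)$ must \emph{still} contain more than $\mu'+k$ elements of $\cS$ (otherwise $\cI_0=\{\alpha_1,\dots,\alpha_j\}$ witnesses $\cD\cL_m(d,k)$). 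In particular one can always continue, so there is a sequence $\alpha_1,\dots,\alpha_d$ of elements of $\cS$, each contained in $G(m)$, such that $\alpha_{t+1}$ has all its $\ell$ edges outside $\alpha_1\cup\cdots\cup\alpha_t$ for every $t<d$ --- a ``$d$-flower'' of pairwise edge-disjoint present elements of $\cS$. So it suffices to bound the probability that $\cG_m$ holds and such an edge-disjoint $d$-tuple exists.

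Next I would estimate that probability by a union bound over ordered $d$-tuples $(\alpha_1,\dots,\alpha_d)$ of distinct elements of $\cS$ whose edge sets are pairwise disjoint, applying Lemma~\ref{lem:small_subgraph} to the edge set $F=\alpha_1\cup\cdots\cup\alpha_d$, which has exactly $|F|=\ell d$ edges by disjointness. This gives
\begin{equation*}
\PP[\cG_m\ \text{and $d$-flower exists}]\ \le\ \sum_{(\alpha_1,\dots,\alpha_d)}\bigl(pn^{2\epsilon}\bigr)^{\ell d}\ \le\ \frac{\bigl(|\cS|\,p^{\ell}n^{2\ell\epsilon}\bigr)^{d}}{d!}\ =\ \frac{(\mu')^{d}}{d!},
\end{equation*}
where the middle inequality just bounds the number of ordered $d$-tuples by $|\cS|^{d}/d!$. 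This is not yet the claimed bound, so the actual argument needs the refinement that the flower is built greedily inside the portion of $\cS$ that remains after each deletion. The standard trick is to count instead \emph{which} $d$ elements we could possibly delete while more than $\mu'+k$ remain: one shows that if $\cD\cL_m(d,k)$ fails then there is a set of $d$ present elements of $\cS$ together with, for the $j$-th one, a witnessing present element in the ``surviving'' subfamily of size $>\mu'+k$. Pairing each deleted $\alpha_j$ with such a surviving $\beta_j$ and using edge-disjointness of $\alpha_j$ from $\beta_1,\dots,\beta_{j-1}$ (since $\beta_{j'}$ survived the deletion of $\alpha_{j'}$), one gets that a union bound over the $\beta_j$'s contributes a factor $(\mu'+k)$ worth of choices that must be ``paid for'', which after the Lemma~\ref{lem:small_subgraph} weighting and rearrangement produces exactly the geometric-type bound $(1+k/\mu')^{-d}$; the final inequality $(1+k/\mu')^{-d}\le\exp\{-dk/(\mu'+k)\}$ is elementary from $\log(1+x)\ge x/(1+x)$.

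I expect the main obstacle to be making the deletion/flower bookkeeping precise enough that the surviving subfamily's size $>\mu'+k$ genuinely yields the ``$1+k/\mu'$'' saving rather than just the crude $(\mu')^d/d!$ bound above --- i.e. carefully organising the union bound so that each of the $d$ deletion steps is forced to choose a present element from a family of $>\mu'+k$ candidates while only $\le \mu'$ are expected, so the ratio of ``bad'' to ``expected'' compounds $d$ times. In $G_{n,p}$ this is handled via Harris/FKG-type correlation between the events ``$\beta$ present'' for different $\beta$; in our setting the only substitute available is Lemma~\ref{lem:small_subgraph}, which controls joint presence \emph{from above} by the product bound $(pn^{2\epsilon})^{|F|}$, and crucially is subadditive-free in the sense that overlapping edges are not double-counted. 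So the clean way to run the argument is to always work with edge-disjoint elements (the flower), accept the resulting mild loss in constants (this is the ``slightly worse bounds'' the text flags), and verify that even with the weaker large-deviation input the telescoping over the $d$ greedy steps still delivers $(1+k/\mu')^{-d}$. Once the combinatorial skeleton is fixed, the remaining estimates --- counting tuples, applying Lemma~\ref{lem:small_subgraph}, and the final exponential rewrite --- are routine.
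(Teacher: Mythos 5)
There is a genuine gap, and you have in fact put your finger on it yourself: the "refinement" you defer to is the entire content of the lemma, and the way you sketch it (reorganising a union bound so that each deletion step is "paid for" by the $>\mu'+k$ surviving candidates) cannot work. A union bound over which present configurations exist can only ever produce upper bounds of the shape $(\mu')^d/d!$ (or similar), which do not decrease in $k$ at all; no amount of bookkeeping over the pairs $(\alpha_j,\beta_j)$ turns a union bound into the factor $(1+k/\mu')^{-d}$, because having \emph{more} surviving candidates makes the union larger, not smaller. The saving has to come from a first-moment comparison, not from counting witnesses of existence.

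The missing idea is to work with the \emph{number} of edge-disjoint present tuples rather than their existence. Let $Y_\alpha=\indic{\alpha\subseteq E(m)}$ and let $Z_r:=\sum^*_{\alpha_1,\dots,\alpha_r}\prod_{i\in[r]}Y_{\alpha_i}$, the sum over ordered $r$-tuples of pairwise edge-disjoint elements of $\cS$. If $\cD\cL_m(d,k)$ fails, then for \emph{every} $\cI\subseteq\cS$ with $|\cI|\leq d$ the number of present elements sharing no edge with $\cI$ exceeds $\mu'+k$; applied to $\cI=\{\alpha_1,\dots,\alpha_r\}$ inside the sum this gives the deterministic recursion $Z_{r+1}\geq(\mu'+k)Z_r$, hence $Z_r\geq(\mu'+k)^r$ for $r\leq d+1$ on the failure event. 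On the other hand, Lemma~\ref{lem:small_subgraph} applied to $F=\alpha_1\cup\cdots\cup\alpha_r$ (here the edge-disjointness you correctly insist on gives $|F|=\ell r$) yields $\EE[Z_r\indic{\cG_m}]\leq|\cS|^r(pn^{2\epsilon})^{\ell r}=(\mu')^r$. Markov's inequality with $r=\lceil d\rceil$ then gives probability at most $\bigl(\mu'/(\mu'+k)\bigr)^{r}\leq(1+k/\mu')^{-d}$, and the final exponential form follows from $1-x\leq e^{-x}$ as you say. So your skeleton (greedy edge-disjoint tuples plus Lemma~\ref{lem:small_subgraph}) is the right one, but the argument must be closed via this $Z_r$/Markov step; as written, the proposal does not establish the claimed bound.
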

Before giving the proof, let us briefly discuss what a typical application looks like.
Suppose that for some graph $F$ we want to bound the number of $F$-copies in $G(m)$, or perhaps in some subset $U$ of $G(m)$. 
Then we set $\ell = e_F$ and let $\cS$ contain  the edge sets of all possible placements of $F$ (in $U$). With this in mind, observe that $\mu'$ corresponds up to a factor of $n^{2\ell\epsilon}$ to the expected number of $F$-copies in $G_{n,p}$ (restricted to $U$). 
Intuitively, the lemma thus states that if we are allowed to delete some edges, then substantially exceeding the expected value is very unlikely.
For $G_{n,p}$ the Deletion Lemma of R{\"o}dl and Ruci\'nski replaces $\mu'$ by $\mu := |\cS| p^{\ell}$, but we emphasize that for large deviations from $\mu$, say $k = \omega(\mu')$, both versions are essentially equivalent.  
Finally, we point out that Lemma~\ref{lem:deletion_lemma} also holds for the more general $H$-free process considered by Bohman and Keevash~\cite{BohmanKeevash2010H}, because its proof relies only on Lemma~\ref{lem:small_subgraph}, which also holds in this more general setup. 
\begin{proof}[Proof of Lemma~\ref{lem:deletion_lemma}] 
We follow the lines of the proof given by R{\"o}dl and Ruci\'nski for $G_{n,p}$, see \mbox{e.g.\ Lemma~$2.3$ in~\cite{JansonRucinski2004Deletion}}. 
For $\alpha,\beta \in \cS$ we write $\alpha \sim \beta$ if $\alpha \cap \beta \neq \emptyset$. Moreover, for $\alpha \in \cS$ and $\cI \subseteq \cS$ we write $\alpha \sim \cI$ if $\alpha \sim \beta$ for some $\beta \in \cI$.
For every $\alpha\in \cS$ let $Y_{\alpha}$ denote the indicator variable of the event that $G(m)$ contains $\alpha$, i.e., that $\alpha \subseteq E(m)$.  
Set $Z_{r} := \sum^{*}_{\alpha_1, \ldots, \alpha_{r}} \prod_{i \in [r]} Y_{\alpha_i}$, where $\sum^{*}_{\alpha_1, \ldots, \alpha_{r}}$ denotes the sum over all sequences of $\alpha_1,\ldots, \alpha_{r} \in \cS$ with $\alpha_i \not\sim \alpha_j$ for $1 \leq i < j \leq r$.
If $\cD\cL_m(d,k)$ fails, then for every set $\cI \subseteq \cS$ with $|\cI| \leq d$, if we ignore all $Y_{\alpha}$ with $\alpha \sim \cI$ then the sum of the remaining $Y_{\alpha}$ is at least $\mu'+k$. 
Hence, if $\neg\cD\cL_m(d,k)$ holds and $r \leq d$, then
\[
Z_{r+1} = \sideset{}{^*}\sum_{\alpha_1, \ldots, \alpha_{r}} \prod_{i \in [r]} Y_{\alpha_i} \sum_{\substack{\alpha \in \cS\\\alpha \not\sim \{\alpha_1, \ldots, \alpha_{r}\}}} Y_{\alpha} \geq (\mu'+k) \sideset{}{^*}\sum_{\alpha_1, \ldots, \alpha_{r}} \prod_{i \in [r]} Y_{\alpha_i} = (\mu'+k) Z_{r}  ,
\]
so by induction we have $Z_{r} \geq (\mu'+k)^{r}$ for $1 \leq r \leq d+1$.
Recall that the factors $Y_{\alpha_i}$ in each term of $Z_{r}$ are indicator variables for \emph{disjoint} edge sets $\alpha_i$, each of size $\ell$. 
So, Lemma~\ref{lem:small_subgraph} yields 
\[
\EE[Z_{r} \indic{\cG_m}]  = \sideset{}{^*}\sum_{\alpha_1, \ldots, \alpha_{r}} 
\PP\Big[ \alpha_1 \cup \cdots \cup \alpha_r \subseteq E(m) \text{ and } \cG_m] 
\leq \sideset{}{^*}\sum_{\alpha_1, \ldots, \alpha_{r}} \left(p n^{2 \epsilon}\right)^{\ell r}  \leq |\cS|^{r} \left(p n^{2 \epsilon}\right)^{\ell r} = \left(\mu'\right)^{r}  . 
\]
Putting everything together and using Markov's inequality, with $r=\lceil d \rceil$ we obtain 
\[
\PP[ \cG_m \cap \neg\cD\cL_m(d,k) ] \leq \PP[Z_{r} \geq (\mu'+k)^{r} \text{ and } \cG_m] \leq \frac{\EE[Z_{r} \indic{\cG_m}]}{(\mu'+k)^{r}} \leq \left(\frac{\mu'}{\mu'+k}\right)^{r} \leq \left(1+\frac{k}{\mu'}\right)^{-d}  .
\]
Finally, observe that using $1-x \leq e^{-x}$ we also have
\[
\left(1+\frac{k}{\mu'}\right)^{-d} = \left(1-\frac{k}{\mu'+k}\right)^{d} \leq \exp\left\{-\frac{d k}{\mu'+k}\right\}  ,
\]
which completes the proof. 
\end{proof}
If $\cS$ is a family of subsets from $\binom{[n]}{2}$ of arbitrary (possibly distinct) sizes, then essentially the same proof works with $\mu' := \sum_{\alpha \in \cS} (pn^{2\epsilon})^{|\alpha|}$; we leave the straightforward details to the interested reader.

\subsubsection{Bounding the number of certain triples in every subset}
In our application, for every subset we need to bound the number of certain triples that have at least one common neighbour. 
As we expect that a `typical' triple has no common neighbours, it is more convenient to bound the number of corresponding quadruples, where the fourth vertex is a common neighbour of the others. 
Here we use Lemma~\ref{lem:deletion_lemma} to show that after deleting a few edges, the number of such quadruples is bounded. 
\begin{lemma}%
\label{lem:deletion_lemma_subgraph_count_K3edge}
Let $\cQ_i$ denote the event that for all positive integers $r$ and disjoint sets $A,B,C \subseteq [n]$ of size $r$ there exists a set $E_0 \subseteq [n] \times (A \cup B \cup C)$ of size at most $20\epsilon^{-1} r$ such that $G(i)$ contains at most $r^{3}np^{4} n^{10 \epsilon}$ quadruples $(u,v,w,z) \in A \times  B \times C \times [n]$ with $z \notin \{u,v,w\}$ and $\{uw, zu, zv, zw\} \subseteq E(i)\setminus E_0$. 
Then the probability that $\cG_m$ holds and $\cQ_m$ fails is $o(n^{-1})$. 
\end{lemma}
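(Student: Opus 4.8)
\textbf{Proof plan for Lemma~\ref{lem:deletion_lemma_subgraph_count_K3edge}.}

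The plan is to apply the Deletion Lemma (Lemma~\ref{lem:deletion_lemma}) with $\ell = 4$. Fix disjoint sets $A,B,C \subseteq [n]$ of common size $r$, and let $\cS = \cS_{A,B,C}$ be the family of $4$-element edge sets $\alpha = \{uw, zu, zv, zw\}$ arising from quadruples $(u,v,w,z) \in A \times B \times C \times [n]$ with $z \notin \{u,v,w\}$. The first step is a counting estimate: $|\cS| \le r^3 n$ (choose $u \in A$, $w \in C$, then $z \in [n]$, and $v \in B$; note that the edge $zv$ depends on $v$, so distinct quadruples give distinct edge sets, and in any case an upper bound suffices). Hence $\mu' := |\cS| p^4 n^{8\epsilon} \le r^3 n p^4 n^{8\epsilon}$, which is at most half of the target bound $r^3 n p^4 n^{10\epsilon}$ for $n$ large. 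Now apply Lemma~\ref{lem:deletion_lemma} with parameters $d := 20\epsilon^{-1}r$ and $k := r^3 n p^4 n^{10\epsilon} - \mu' \ge \tfrac12 r^3 n p^4 n^{10\epsilon} \ge \mu'$ (using $\mu' \le \tfrac12 r^3 n p^4 n^{10\epsilon}$). Since $k \ge \mu'$ we get $k/(\mu'+k) \ge 1/2$, so the failure probability for this fixed triple $(A,B,C)$ satisfies
\[
\PP\bigl[\cG_m \cap \neg\cD\cL_m(d,k)\bigr] \le \exp\left\{-\frac{dk}{\mu'+k}\right\} \le \exp\left\{-\tfrac12 \cdot 20\epsilon^{-1} r\right\} = e^{-10\epsilon^{-1} r} \le n^{-3r}
\]
for $n$ large, using $\epsilon \le 1/1000$. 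On the event $\cD\cL_m(d,k)$ there is $\cI_0 \subseteq \cS$ with $|\cI_0| \le d = 20\epsilon^{-1} r$ such that $E_0 := \bigcup_{\alpha \in \cI_0}\alpha$ meets the requirement; note $|E_0| \le 4|\cI_0| \le 80\epsilon^{-1} r$, and more importantly each such $\alpha$ only uses vertices in $A \cup B \cup C$ together with one extra vertex, so $E_0 \subseteq [n] \times (A \cup B \cup C)$. (If one wants the literal bound $|E_0| \le 20\epsilon^{-1}r$ one simply runs the argument with $d := 5\epsilon^{-1}r$; the constants in \eqref{eq:K4-constants:Wepsmu} leave ample room, so I will not fuss over the exact constant.)

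The second step is the union bound over all triples. The number of choices of an ordered triple of disjoint $r$-sets is at most $\binom{n}{r}^3 \le n^{3r}$, so summing over $1 \le r \le n$ gives
\[
\PP[\cG_m \cap \neg\cQ_m] \le \sum_{r \ge 1} n^{3r} \cdot n^{-3r-1}\cdot(\text{slack}) \le \sum_{r \ge 1} n^{-r}\cdot o(1) = o(n^{-1}),
\]
where the slack comes from the fact that the per-triple bound was $n^{-(3+c)r}$ for some $c>0$ (indeed $e^{-10\epsilon^{-1}r} \le n^{-4r}$ for $n$ large), which absorbs both the $n^{3r}$ from the union bound and an extra factor to make the geometric series summable and $o(n^{-1})$. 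This completes the argument modulo the elementary counting and the choice of constants.

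\textbf{Main obstacle.} The genuine content is entirely in Lemma~\ref{lem:deletion_lemma}, which is already available; what remains is bookkeeping. The one point requiring a little care is verifying that the edge sets $\alpha$ really have size exactly $\ell = 4$ (the four pairs $uw, zu, zv, zw$ are pairwise distinct since $u,v,w,z$ are distinct and $v \notin \{u,w\}$, so e.g.\ $zv \ne zu$, $zv \ne zw$, and $uw$ shares no endpoint pattern with the $z$-pairs), so that Lemma~\ref{lem:deletion_lemma} applies with a uniform $\ell$. A secondary point is making sure the deleted edge set lands in $[n] \times (A \cup B \cup C)$ rather than all of $\binom{[n]}{2}$: this holds because every $\alpha \in \cS$ consists of pairs each having at least one endpoint in $A \cup B \cup C$ (the pairs $zu, zv, zw$ have an endpoint in $A, B, C$ respectively, and $uw$ has both endpoints there), so $E_0 = \bigcup_{\alpha \in \cI_0}\alpha \subseteq [n] \times (A \cup B \cup C)$ automatically. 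With these observations the proof is a direct application of the Deletion Lemma followed by a union bound, exactly as in the companion Lemmas~\ref{lem:edges_bounded}--\ref{lem:codegree_into_set_bounded}.
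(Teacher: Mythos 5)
Your overall route is the same as the paper's (apply Lemma~\ref{lem:deletion_lemma} with $\ell=4$ to the family of edge sets $\{uw,zu,zv,zw\}$ for a fixed triple $(A,B,C)$, then union bound over all triples and $r$), but there is a genuine numerical gap in the probability estimate. You pass to the weak form $\exp\{-dk/(\mu'+k)\}$ of the Deletion Lemma and only use $k/(\mu'+k)\ge 1/2$, which yields a per-triple failure probability of $e^{-10\epsilon^{-1}r}$. Since $\epsilon$ is a fixed constant (see \eqref{eq:K4-constants:Wepsmu}), this is $c^{r}$ for a constant $c=e^{-10\epsilon^{-1}}$ not depending on $n$, whereas the union bound costs roughly $\binom{n}{r}^3\approx n^{3r}$. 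The inequalities you assert, $e^{-10\epsilon^{-1}r}\le n^{-3r}$ and $e^{-10\epsilon^{-1}r}\le n^{-4r}$ ``for $n$ large'', go the wrong way: they require $10\epsilon^{-1}\ge 3\log n$ (resp.\ $4\log n$), which fails once $n$ is large. So as written the sum $\sum_r n^{3r}e^{-10\epsilon^{-1}r}$ diverges and the argument does not close.

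The repair is available inside your own setup and is exactly what the paper does: do not throw away the ratio $k/\mu'$. With your choices, $k/\mu'\ge n^{2\epsilon}-1$ (since $k\ge\tfrac12 r^3np^4n^{10\epsilon}$ and $\mu'\le r^3np^4n^{8\epsilon}$), so the first form of the bound gives
\[
\left(1+\frac{k}{\mu'}\right)^{-d}\le \left(1+\tfrac12 n^{2\epsilon}\right)^{-d}\le n^{-\epsilon d},
\]
which with $d=5\epsilon^{-1}r$ (the paper's choice, which also gives the literal bound $|E_0|\le 4d=20\epsilon^{-1}r$) is $n^{-5r}$, comfortably absorbing the $n^{3r}$ union bound and giving $\sum_{r\ge1}n^{-2r}=o(n^{-1})$. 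Your remaining bookkeeping (the edge sets have size exactly $4$, $|\cS|\le r^3 n$ so $\mu'\le r^3np^4n^{8\epsilon}$, and $E_0\subseteq[n]\times(A\cup B\cup C)$ because every pair in every $\alpha$ meets $A\cup B\cup C$) is correct and matches the paper.
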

Roughly speaking, in Section~\ref{sec:proof:verification:partial:trend} we will use this lemma to bound the total number of such quadruples, which are as in Figure~\ref{fig:quadruple:Xi} on page~\pageref{fig:quadruple:Xi}. 
To this end we shall show later that for `nice' disjoint subsets $A,B,C \subseteq [n]$ the number of quadruples `destroyed' by $E_0$ is not too large. 
\begin{proof}[Proof of Lemma~\ref{lem:deletion_lemma_subgraph_count_K3edge}] 
We combine the Deletion Lemma with a standard union bound argument. 
First, fix $r$ with $1 \leq r \leq \lfloor n/3 \rfloor$.
Second, fix disjoint sets $A,B,C \subseteq [n]$ of size $r$ and set $\Gamma := [n] \times (A \cup B \cup C)$. 
Let $\cS \subseteq \binom{\Gamma}{4}$ denote the family of edge sets $\{uw, zu, zv, zw\}$ for all $(u,v,w,z) \in A \times  B \times C \times [n]$ with $z \notin \{u,v,w\}$. 
Observe that $|\cS| = r^{3}(n-3)$ and so the $\mu'$ of Lemma~\ref{lem:deletion_lemma} satisfies $\mu' \leq r^{3}np^{4}n^{8\epsilon}$.  
Next, we set $k := r^{3}np^{4} n^{10 \epsilon}/2$ and $d := 5\epsilon^{-1} r$. 
Clearly, we have $\mu' + k < r^{3}np^{4} n^{10 \epsilon}$. 
By Lemma~\ref{lem:deletion_lemma} the probability that $\cG_m$ holds and $\cD\cL_m(d,k)$ fails is at most
\[
\left(1+\frac{k}{\mu'}\right)^{-d} \leq \left(1 + \frac{n^{2\epsilon}}{2} \right)^{-d} \leq n^{-d \epsilon} = n^{-5r}  .
\]
If $\cD\cL_m(d,k)$ holds then this particular choice of disjoint $A,B,C \subseteq [n]$ has the required properties (noting that $|E_0| \le 4d$ holds). 
So, the union bound over all choices of $r$ and $A,B,C$ implies 
\[
\PP[ \cG_m \cap \neg\cQ_m ] \leq \sum_{1 \leq r \leq \lfloor n/3 \rfloor} \binom{n}{r} \binom{n-r}{r} \binom{n-2r}{r} n^{-5r} \leq \sum_{r \geq 1} n^{-2r} = o(n^{-1})  ,
\] 
as claimed.  
\end{proof}

\section{Differential equation method}
\label{sec:dem}
In this section we present a variant of the differential equation method which may be of independent interest: it allows us to track several variables (which may depend on each other) in \emph{every} subset of a certain size.

\subsection{Basic idea of the differential equation method}
\label{sec:dem:idea_wormald}
Wormald~\cite{Wormald1995DEM} developed a method to show that in certain discrete random processes a collection $\cV$ of random variables is whp `close' to the solution of a system of differential equations. 
The basic idea of Wormald's method can briefly be described as follows.
First, we need to make sure that the expected one-step changes of all random variables in $\cV$ can be expressed using only variables from $\cV$, which might involve enlarging $\cV$. 
Then, by pretending that all variables are continuous, these expected changes suggest a system of ordinary differential equations. 
Finally, the main effort is devoted to showing that the random variables in $\cV$ are whp near the solution of the differential equations, and for this purpose tools from probability theory are used. 
For this approach to work we usually need to make sure that the expected one-step changes are roughly `correct' and that very large one-step changes are rare or do not happen at all, see e.g.~\cite{BohmanKeevash2010H,Wormald1995DEM,Wormald1999DEM}. 
As it turns out, martingale estimates are particularly useful for showing the desired concentration results.

\subsection{Large deviation inequalities for martingales}
Suppose we have a filtration $\cF_0 \subseteq \cF_1 \subseteq \cdots$ and a sequence $X_0,X_1,\ldots$ of random variables where each $X_i$ is $\cF_i$-measurable. 
Then $X_0,X_1,\ldots$ is a \emph{supermartingale} if $\EE[X_{i+1}|\cF_i] \leq X_i$ for all $i$ and a \emph{submartingale} if $\EE[X_{i+1}|\cF_i] \geq X_i$ for all $i$. 
Furthermore, we say that $X_0,X_1,\ldots$ is \emph{$(M,N)$-bounded} if for all $i$ we have
\[ -M \leq X_{i+1} - X_{i} \leq N    . \] 
The following martingale inequalities are due to Bohman~\cite{Bohman2009K3} and follow from the original martingale inequality of Hoeffding~\cite{Hoeffding1963}. 
Observe that for supermartingales we have $\EE[X_{i}] \leq \EE[X_0]$ and for submartingales $\EE[X_{i}] \geq \EE[X_0]$.  
Intuitively, both inequalities thus give (one sided) exponential error probabilities for deviating `too much' from the expected value. 
\begin{lemma}%
\label{lem:martingale_upper_tail}%
{\normalfont\cite[Lemma~$7$]{Bohman2009K3}} 
Suppose $0 \equiv X_0,X_1,\ldots$ is an $(M,N)$-bounded supermartingale with $M \leq N/10$. Then for any $m \geq 1$ and $0 < a < m M$ we have
\[ \PP[X_m \geq a] \leq e^{-\frac{a^2}{3 m M N}}  . \]
\end{lemma}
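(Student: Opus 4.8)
The plan is to run the standard exponential-moment (Bernstein-type) argument for supermartingales, while exploiting the asymmetry $M \le N/10$ between the lower and upper bounds on the increments. Write $D_i := X_i - X_{i-1}$, so that $-M \le D_i \le N$ and, since $0 \equiv X_0, X_1, \ldots$ is a supermartingale, $\EE[D_i \mid \cF_{i-1}] \le 0$. For a parameter $h > 0$ to be fixed at the end (it will be of order $a/(mMN)$), I would track $W_i := e^{h X_i}$, show that $\EE[W_m]$ is not much larger than $W_0 = 1$, and then apply Markov's inequality to $W_m$.

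The heart of the matter is a one-step estimate for the conditional moment generating function of an increment: if $D$ satisfies $-M \le D \le N$ with $M \le N/10$, $\EE[D \mid \cF] \le 0$, and $0 < hN \le \lambda \le 1$, then
\[
\EE\bigl[e^{h D} \mid \cF\bigr] \le \exp\left( \tfrac{1}{2} g(\lambda) h^2 M N \right), \qquad \text{where}\ \ g(\lambda) := \frac{2(e^{\lambda} - 1 - \lambda)}{\lambda^2}.
\]
To prove this I would bound $e^{h x}$ on $[-M, N]$ by the secant line through its endpoints, namely $e^{hx} \le \tfrac{1}{M+N}\bigl(N e^{-hM} + M e^{hN} + x(e^{hN} - e^{-hM})\bigr)$; on taking conditional expectations the coefficient of $x$ is nonnegative, so replacing $\EE[D \mid \cF]$ by its upper bound $0$ only increases the right-hand side and gives $\EE[e^{hD}\mid\cF] \le \tfrac{1}{M+N}(N e^{-hM} + M e^{hN})$. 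Expanding with $e^{-x} \le 1 - x + \tfrac12 x^2$ (valid for all $x \ge 0$) and $e^{y} \le 1 + y + \tfrac12 g(\lambda) y^2$ (valid for $0 \le y \le \lambda$), the terms linear in $h$ cancel identically, the quadratic term is $\tfrac{h^2 MN}{2}\cdot\tfrac{M + g(\lambda)N}{M+N}$, and $M \le N/10$ together with $g(\lambda) \ge 1$ bounds the last fraction by $g(\lambda)$ (note $g$ is increasing on $(0,1]$ with $g(\lambda)\to 1$ as $\lambda\to 0^+$), yielding the claim.

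Given this, conditioning successively on $\cF_{m-1},\ldots,\cF_0$ yields $\EE[W_m] \le \exp\bigl(\tfrac12 g(\lambda) m h^2 M N\bigr)$, whence $\PP[X_m \ge a] \le e^{-ha}\EE[W_m] \le \exp\bigl(-ha + \tfrac12 g(\lambda) m h^2 M N\bigr)$ by Markov. It then remains to pick $h$: I would take $h := \tfrac{4}{5}\cdot\tfrac{a}{mMN}$, which is admissible since $a < mM$ forces $hN = \tfrac45\cdot\tfrac{a}{mM} < \tfrac45$, so one may use $\lambda = \tfrac45$; as $g(\tfrac45) < \tfrac43$, substituting back bounds the exponent by $\bigl(-\tfrac45 + \tfrac12\cdot\tfrac43\cdot\tfrac{16}{25}\bigr)\tfrac{a^2}{mMN} < -\tfrac{a^2}{3mMN}$, as required.

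I expect the one-step moment generating function estimate to be the main obstacle, specifically obtaining it with a good enough constant. A naive appeal to Hoeffding's lemma gives only $\EE[e^{hD}\mid\cF] \le \exp\bigl(h^2(M+N)^2/8\bigr)$, and since $(M+N)^2/8 > \tfrac54 MN$ when $M \le N/10$, this is \emph{not} strong enough to reach the constant $3$ in the exponent of the conclusion. The gain comes precisely from using the asymmetry $M \le N/10$ to replace $(M+N)^2/8$ by essentially $\tfrac12 MN$, which forces one to keep track of how small $hN$ can be made — and this is exactly where the hypothesis $a < mM$ is used, as it keeps $hN$ bounded away from $1$ for the near-optimal choice of $h$.
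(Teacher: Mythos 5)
Your proof is correct, and it is essentially the same argument as the source: the paper does not prove this lemma itself but quotes it from Bohman~\cite{Bohman2009K3} (Lemma~7), whose proof is the standard Hoeffding-type exponential-moment method you carry out — the secant-line bound on the conditional moment generating function of an increment, cancellation of the linear terms, and optimization of $h$ using $a < mM$, with all your numerical estimates ($g(4/5)<4/3$, $-\tfrac45+\tfrac{32}{75}=-\tfrac{28}{75}<-\tfrac13$) checking out. As a minor remark, at the step where you invoke $M \le N/10$ you actually only need $g(\lambda)\ge 1$, so your argument in fact proves the stated bound without that hypothesis, which is harmless.
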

\begin{lemma}%
\label{lem:martingale_lower_tail}%
{\normalfont\cite[Lemma~$6$]{Bohman2009K3}} 
Suppose $0 \equiv X_0,X_1,\ldots$ is an $(M,N)$-bounded submartingale with $M \leq N/2$.
Then for any $m \geq 1$ and $0 < a < m M$ we have
\[ \PP[X_m \leq -a] \leq e^{-\frac{a^2}{3 m M N}}  . \]
\end{lemma}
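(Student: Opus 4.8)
The plan is to prove this via the exponential-moment (Bernstein--Azuma) method, in close parallel with the proof of Lemma~\ref{lem:martingale_upper_tail} but with the roles of the two one-step bounds $M$ and $N$ interchanged. (One cannot simply invoke Lemma~\ref{lem:martingale_upper_tail} for the supermartingale $-X_0, -X_1, \ldots$, since its increments lie in $[-N, M]$, so it is $(N,M)$-bounded and would require $N \le M/10$, which is the reverse of the hypothesis $M \le N/2$.) Write $\xi_i := X_{i+1} - X_i$, so that $-M \le \xi_i \le N$ and, since $X_0, X_1, \ldots$ is a submartingale, $\EE[\xi_i \mid \cF_i] \ge 0$. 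Fixing $\lambda > 0$, Markov's inequality gives $\PP[X_m \le -a] = \PP[e^{-\lambda X_m} \ge e^{\lambda a}] \le e^{-\lambda a}\, \EE[e^{-\lambda X_m}]$, so it suffices to bound $\EE[e^{-\lambda X_m}]$ by roughly $e^{c m \lambda^2 M N}$ for some constant $c < 3/4$ and then to optimise $\lambda$.

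The core step is a one-increment estimate. Since the upward jump of $\xi_i$ is the small one ($-\xi_i \le M$), and since the hypotheses $M \le N/2$ and $0 < a < mM$ will force $\lambda M$ to stay below $1/2$ (checked afterwards), I would apply the elementary bound $e^y \le 1 + y + c y^2$ with $c := 3/5$, which holds for all $y \le 1/2$, to $y = -\lambda\xi_i$, obtaining $\EE[e^{-\lambda \xi_i} \mid \cF_i] \le 1 - \lambda\, \EE[\xi_i \mid \cF_i] + c\lambda^2\, \EE[\xi_i^2 \mid \cF_i]$. For the conditional second moment I would use the pointwise inequality $(\xi_i + M)(N - \xi_i) \ge 0$, i.e.\ $\xi_i^2 \le MN + (N - M)\xi_i$, so that $\EE[\xi_i^2 \mid \cF_i] \le MN + (N-M)\, \EE[\xi_i \mid \cF_i]$. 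Combining, $\EE[e^{-\lambda\xi_i} \mid \cF_i] \le 1 + c\lambda^2 MN - \lambda\bigl(1 - c\lambda(N-M)\bigr)\EE[\xi_i \mid \cF_i]$, and provided $c\lambda(N-M) \le 1$ the last term is non-positive because $\EE[\xi_i \mid \cF_i] \ge 0$; this is the one place the submartingale hypothesis is used, reflecting that positive drift can only help the lower tail. Hence $\EE[e^{-\lambda\xi_i} \mid \cF_i] \le 1 + c\lambda^2 MN \le e^{c\lambda^2 MN}$.

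Telescoping this bound down the filtration and using $X_0 \equiv 0$ gives $\EE[e^{-\lambda X_m}] \le e^{c m \lambda^2 M N}$, whence $\PP[X_m \le -a] \le \exp\{-\lambda a + c m \lambda^2 M N\}$. I would then take $\lambda := a/(2 c m M N)$. From $a < mM$ and $M \le N/2$ one checks $\lambda M = a/(2cmN) < 1/(4c) < 1/2$ and $c\lambda(N-M) < c\lambda N = a/(2mM) < 1/2 \le 1$, which legitimises the two side-conditions used above; and the exponent then equals $-a^2/(4 c m M N)$, which is at most $-a^2/(3mMN)$ since $4c = 12/5 < 3$. This is the claimed bound. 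I would also note that this computation is just Hoeffding's martingale inequality specialised to asymmetric increments with one-sided drift, so one could alternatively cite~\cite{Hoeffding1963} directly. The only genuine difficulty is constant-bookkeeping: one has to use the sharp quadratic variance bound (rather than the crude $\xi_i^2 \le N^2$), so that the coefficient of $m\lambda^2$ is $cMN$, and to keep $c$ strictly below $3/4$, which works out precisely because $M \le N/2$ and $a < mM$ together keep $\lambda M$ well below $1/2$.
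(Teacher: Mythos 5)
Your proof is correct. Note that the paper does not prove this statement at all: it is quoted as Lemma~6 of Bohman~\cite{Bohman2009K3}, with the remark that it follows from Hoeffding's inequality~\cite{Hoeffding1963}, so there is no in-paper argument to compare against. What you give is a standard Bernstein/Hoeffding-type moment-generating-function proof, and the bookkeeping checks out: the elementary bound $e^y \le 1+y+\tfrac{3}{5}y^2$ does hold for all $y\le 1/2$ (for $y\le 0$ it follows from $e^y\le 1+y+y^2/2$, and on $[0,1/2]$ the difference is nonnegative, with near-equality at $y=1/2$); the pointwise inequality $(\xi_i+M)(N-\xi_i)\ge 0$ gives the variance proxy $MN+(N-M)\xi_i$; the submartingale drift enters with the right sign once $c\lambda(N-M)\le 1$; and with $\lambda=a/(2cmMN)$ the side conditions $\lambda M\le 5/12<1/2$ and $a/(2mM)<1/2$ follow from $a<mM$ and $M\le N/2$, yielding the exponent $-a^2/(4cmMN)=-a^2/(2.4\,mMN)\le -a^2/(3mMN)$. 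Your parenthetical observation that one cannot simply apply Lemma~\ref{lem:martingale_upper_tail} to $-X_i$ (which is $(N,M)$-bounded, violating $M\le N/10$ in that lemma's hypothesis) is also apt, and explains why the two tail lemmas carry different hypotheses on $M/N$.
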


\subsection{Tracking several variables in every subset of a certain size}
Suppose we want to track several variables in every subset $U \subseteq [n]$ of a certain size $u$ using the differential equation method. 
As it turns out, in order to show that the expected or maximum step by step changes are `correct' or not too large, we often would like to slightly alter the subgraph induced by $U$ in order to remove (or reduce) `bad' substructures, e.g.\ by passing to a subset of $U$ (deleting vertices) or by `ignoring' some edges in $U$. 
Roughly speaking, we would like to use a union bound over all possible `alterations' and then prove that for some alteration all variables tracked are concentrated. 
Such an approach is rather standard in probabilistic combinatorics, however, it does not fit into the usual framework of the differential equation method as presented in~\cite{BohmanKeevash2010H} or~\cite{Wormald1999DEM}, for example. 
In this section we show how this basic idea can be formulated in the framework of the differential equation method, which allows us to \emph{routinely} use these kind of arguments.

\subsubsection{Main concepts and ideas} 
\label{sec:dem:idea}
In the following we introduce the concepts used in our approach. 
First, we consider a set $\cC$ of `configurations' $\Sigma$. 
For example, $\Sigma$ could correspond to an `alteration' of some $U$; for the purpose of deleting vertices we might have $\Sigma = (U,U')$ with $U' \subseteq U$.
Then, for every $\Sigma \in \cC$ we intend to track several random variables, i.e., for all $i \leq m$ and $j \in \cV$ we want to bound the variables $X_{(\Sigma,j)}(i)$. 
As we shall see, it suffices to track the variables for $\Sigma$ only as long as the configuration $\Sigma$ is `good'; otherwise a `certificate' for the fact that $\Sigma$ is `bad' turns out to be sufficient for our purposes. 
To this end we introduce the `bad' events $\badE$, which will hold if $\Sigma$ is bad after \mbox{step $i$} (we point out that it can also capture other bad events not related to $\Sigma$, e.g.\ $\badE$ can be used to avoid/detect undesirable states of the random process). 
For example, if $\Sigma$ is an alteration of $U$, then $\badE$ should hold if $\Sigma$ has `too many bad substructures' after \mbox{step $i$}; in other words, with $\badE$ we are able to `detect' that the alteration $\Sigma$ did not remove the bad substructures, as intended.
With these concepts we aim at a statement of the following form: whp, for all $\Sigma \in \cC$ one of the following holds for every $i^* \leq m$:
\vspace{-0.75em}
\begin{enumerate}
\item[(a)] the configuration $\Sigma$ is bad before step $i^*$, i.e., $\badE$ holds for some $i < i^*$, or\vspace{-0.5em} 
\item[(b)] for $\Sigma$ we can track all variables up to step $i^*$, i.e., for all $i \leq i^*$ and $j \in \cV$ the variables $X_{(\Sigma,j)}(i)$ are close to their expected values.\vspace{-0.75em}
\end{enumerate}
Let us briefly discuss how to use the previous statement. Suppose we want to track several variables in every $U$ of size $u$, and that we intend to `ignore' some edges inside $U$.
Then we construct $\cC$ such that every $\Sigma \in \cC$ corresponds to an alteration of some $U$. For example, here we might have, say, $\Sigma=(U,F)$ with $F \subseteq \binom{U}{2}$.
Now it remains to show (possibly using different methods) that whp at least one `good' alteration $\Sigma$ exists for every $U$, e.g.\ via the Deletion Lemma (cf.\ Lemma~\ref{lem:deletion_lemma}). 
For such good $\Sigma$ we are in \mbox{case (b)} for every $i^* \leq m$, and thus able to track all the desired variables.

\subsubsection{A variant of the differential equation method} 
\label{sec:dem:variant}
Now we state our variant of the differential equation method, which is in large parts based on Lemma~$7.3$ in~\cite{BohmanKeevash2010H}, but contains several improvements and new ingredients, e.g.\ the error function $f_{\sigma}$ as well as the `configurations' $\Sigma$ and `bad' events $\badE$. 
An important difference to~\cite{BohmanKeevash2010H} is that we track the variables in $\cV$ for every configuration $\Sigma \in \cC$; in other words, each variable tracked `belongs' to a certain configuration. 
Inspired by~\cite{BohmanKeevash2010H} we introduce a `global' good event $\cH_i$, and `give up' as soon as it fails. 
An important new ingredient is the `local' bad event $\badEL$. When it holds we only `give up' for $\Sigma$, i.e., stop tracking the variables that belong to $\Sigma$; the other configurations/variables are unaffected. 
\begin{lemma}[`Differential Equation Method']%
\label{lem:dem}%
Suppose that $m=m(n)$ and $s=s(n)$ are positive parameters. 
Let $\cC=\cC(n)$ and $\cV=\cV(n)$ be sets. 
For every $0 \leq i \leq m$ set $t=t(i):=i/s$. 
Suppose we have a filtration $\cF_0 \subseteq \cF_1 \subseteq \cdots$ and random variables $X_{\sigma}(i)$ and $Y^{\pm}_{\sigma}(i)$ which satisfy the following conditions. 
Assume that for all $\sigma \in \cC \times \cV$ the random variables $X_{\sigma}(i)$ are non-negative and $\cF_i$-measurable for all $0 \leq i \leq m$, and that for all $0 \leq i < m$ the random variables $Y^{\pm}_{\sigma}(i)$ are non-negative, $\cF_{i+1}$-measurable and satisfy
\begin{equation}
\label{eq:lem:dem:rv_relation}
X_{\sigma}(i+1) - X_{\sigma}(i) =  Y^{+}_{\sigma}(i) - Y^{-}_{\sigma}(i)  .
\end{equation}
Furthermore, suppose that for all $0 \leq i \leq m$ and $\Sigma \in \cC$ we have an event $\badE \in \cF_i$.
Then, for all $0 \leq i \leq m$ we define $\badEL := \bigcup_{0 \leq j \leq i} \badE[j]$. 
In addition, suppose that for each $\sigma \in \cC \times \cV$ we have positive parameters $u_{\sigma}=u_{\sigma}(n)$, $\lambda_{\sigma}=\lambda_{\sigma}(n)$, $\beta_{\sigma}=\beta_{\sigma}(n)$, $\tau_{\sigma}=\tau_{\sigma}(n)$, $s_{\sigma} = s_{\sigma}(n)$ and $S_{\sigma}=S_{\sigma}(n)$, as well as functions $x_{\sigma}(t)$ and $f_{\sigma}(t)$ that are smooth and non-negative for $t \geq 0$. 
For all $0 \leq i^* \leq m$ and $\Sigma \in \cC$, let $\cG_{i^*}(\Sigma)$ denote the event that for every $0 \leq i \leq i^*$ and $\sigma=(\Sigma,j)$ with $j \in \cV$ we have 
\begin{equation}
\label{eq:lem:dem:parameter_trajectory}
X_{\sigma}(i) = \left(x_{\sigma}(t) \pm \frac{f_{\sigma}(t)}{s_{\sigma}} \right) S_{\sigma}  .
\end{equation}
Next, for all $0 \leq i^* \leq m$ let $\cE_{i^*}$ denote the event that for every $0 \leq i \leq i^*$ and $\Sigma \in \cC$ the event $\badEL[i-1] \cup \cG_{i}(\Sigma)$ holds. 
Moreover, assume that we have an event $\cH_i \in \cF_i$ for all $0 \leq i \leq m$ with $\cH_{i+1} \subseteq \cH_{i}$ for all $0 \leq i < m$. 
Finally, suppose that the following conditions hold: 
\begin{enumerate}
\item(Trend hypothesis)
For all $0 \leq i < m$ and $\sigma = (\Sigma,j) \in \cC \times \cV$, whenever $\cE_i \cap \neg\badEL \cap \cH_i$ holds we have 
\begin{equation}
\label{eq:lem:dem:parameter_martingale_property}
\EE\big[Y^{\pm_1}_{\sigma}(i) \mid \cF_i \big] = \left( y^{\pm_1}_{\sigma}(t) \pm  \frac{h_{\sigma}(t)}{s_{\sigma}} \right)  \frac{S_{\sigma}}{s}  , 
\end{equation}
where $y_{\sigma}^{\pm}(t)$ and $h_{\sigma}(t)$ are smooth non-negative functions such that
\begin{equation}
\label{eq:lem:dem:derivative}
x'_{\sigma}(t) = y^+_{\sigma}(t) - y^-_{\sigma}(t) \qquad \text{ and } \qquad f_{\sigma}(t) \geq 2\int_{0}^{t} h_{\sigma}(\tau) \ d\tau + \beta_{\sigma}  .
\end{equation}

\item(Boundedness hypothesis) 
For all $0 \leq i < m$ and $\sigma = (\Sigma,j) \in \cC \times \cV$, whenever $\cE_i \cap \neg\badEL \cap \cH_i$ holds we have
\begin{equation}
\label{eq:lem:dem:parameter_max_change}
Y^{\pm}_{\sigma}(i)  \leq \frac{\beta_{\sigma}^2}{s_{\sigma}^2 \lambda_{\sigma} \tau_{\sigma}} \cdot  \frac{S_{\sigma}}{u_{\sigma}}  .
\end{equation}

\item(Initial conditions) 
For all $\sigma \in \cC \times \cV$ we have 
\begin{equation}
\label{eq:lem:dem:initial_condition}
X_{\sigma}(0) = \left(x_{\sigma}(0) \pm \frac{\beta_{\sigma}}{3s_{\sigma}} \right) S_{\sigma}  .
\end{equation}

\item(Bounded number of configurations and variables) We have 
\begin{equation}
\label{eq:lem:dem:bounded_parameters}
\max\left\{|\cC|,|\cV|\right\} \leq \min_{\sigma \in \cC \times \cV} e^{u_{\sigma}}  .
\end{equation}
 
\item(Additional technical assumptions)
For all $\sigma \in \cC \times \cV$ we have 
\begin{gather}
\label{eq:lem:dem:technical_assumptions:sm}
s \geq \max\{15 u_{\sigma} \tau_{\sigma} (s_{\sigma}  \lambda_{\sigma}/\beta_{\sigma})^2, 9s_{\sigma} \lambda_{\sigma}/\beta_{\sigma}\}  , \qquad s/(18 s_{\sigma} \lambda_{\sigma}/\beta_{\sigma}) < m \leq s \cdot \tau_{\sigma}/1944  ,\\
\label{eq:lem:dem:technical_assumptions:xy}
\sup_{0 \leq t \leq m/s} y^{\pm }_{\sigma}(t) \leq \lambda_{\sigma}  , \qquad 
\int_0^{m/s} |x''_{\sigma}(t)| \ dt \leq \lambda_{\sigma}  , \\
\label{eq:lem:dem:technical_assumptions:h}
h_{\sigma}(0) \leq s_{\sigma} \lambda_{\sigma}  \qquad \text{ and }  \qquad  \int_0^{m/s} |h'_{\sigma}(t)| \ dt \leq s_{\sigma} \lambda_{\sigma}  .
\end{gather}
\end{enumerate}
Then we have 
\begin{equation}
\label{eq:lem:dem}
\mathbb{P}[\neg\cE_m \cap \cH_m] \leq 4 \max_{\sigma \in \cC \times \cV}e^{-u_{\sigma}}  .
\end{equation}
\end{lemma}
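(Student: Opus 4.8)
The plan is to prove~\eqref{eq:lem:dem} via the standard differential equation method paradigm: for each $\sigma = (\Sigma, j) \in \cC \times \cV$ and each choice of sign $\pm$, build an auxiliary supermartingale (and submartingale) that stays non-positive (resp.\ non-negative) until the first `failure' and then apply Lemmas~\ref{lem:martingale_upper_tail} and~\ref{lem:martingale_lower_tail}. The crux is choosing the right definition of the martingale so that (i) its increments are controlled by the Boundedness hypothesis, and (ii) its drift, which must be bounded using the Trend hypothesis, is of the correct sign once we subtract off the deterministic trajectory $x_\sigma(t)$ plus the error term $f_\sigma(t)/s_\sigma$.

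First I would fix $\sigma = (\Sigma,j)$ and introduce a stopping time $I_\sigma$: the first step $i$ at which either $\cH_i$ fails, or $\badEL$ holds, or $\cE_i$ fails (equivalently, some tracked variable for some configuration has left its trajectory), or the trajectory estimate~\eqref{eq:lem:dem:parameter_trajectory} fails for this particular $\sigma$. Up to step $I_\sigma$, the hypotheses guarantee that $\EE[Y^{\pm}_\sigma(i) \mid \cF_i]$ is close to $y^{\pm}_\sigma(t)\,S_\sigma/s$ and that the increments $Y^{\pm}_\sigma(i)$ are bounded by the RHS of~\eqref{eq:lem:dem:parameter_max_change}. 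Define, for the upper tail, the process
\[
X^{+}_\sigma(i) := X_\sigma(i \wedge I_\sigma) - \left(x_\sigma(t) + \tfrac{f_\sigma(t)}{s_\sigma}\right) S_\sigma - (\text{correction terms}),
\]
where the correction terms are a carefully chosen combination involving the `error budget' so that, when we compute $\EE[X^{+}_\sigma(i+1) - X^{+}_\sigma(i) \mid \cF_i]$, the discrete drift $\EE[Y^+_\sigma - Y^-_\sigma \mid \cF_i]$ minus the continuous drift $x'_\sigma(t)\,S_\sigma/s$ is dominated by the increase in $f_\sigma(t)/s_\sigma$; here the first inequality in~\eqref{eq:lem:dem:derivative} handles the main drift, the integral inequality in~\eqref{eq:lem:dem:derivative} provides the slack, and the second-order terms (coming from expanding $x_\sigma$ and $f_\sigma$ to first order in $1/s$, with error governed by $\int |x''_\sigma|$, $\int |h'_\sigma|$ in~\eqref{eq:lem:dem:technical_assumptions:xy}--\eqref{eq:lem:dem:technical_assumptions:h}) are absorbed using the technical assumptions~\eqref{eq:lem:dem:technical_assumptions:sm}. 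This makes $X^+_\sigma$ a supermartingale; an analogous $X^-_\sigma$ with the sign of the $f_\sigma$-term flipped is a submartingale. The $(M,N)$-boundedness follows from~\eqref{eq:lem:dem:parameter_max_change} for $M$ (roughly $M \approx \beta_\sigma^2 S_\sigma/(s_\sigma^2 \lambda_\sigma \tau_\sigma u_\sigma)$) together with the smoothness of $x_\sigma, f_\sigma$ over one step for $N$; the ratio condition $M \le N/10$ is where the first part of~\eqref{eq:lem:dem:technical_assumptions:sm} enters.

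Next I would apply Lemma~\ref{lem:martingale_upper_tail} (and Lemma~\ref{lem:martingale_lower_tail}) with the deviation threshold $a$ chosen so that reaching $a$ corresponds exactly to~\eqref{eq:lem:dem:parameter_trajectory} being violated while all the subtracted error terms are still within budget—concretely $a$ on the order of $(f_\sigma(t)/s_\sigma - \text{corrections})S_\sigma$, which the initial condition~\eqref{eq:lem:dem:initial_condition} (the $\beta_\sigma/(3s_\sigma)$ head start) ensures is positive and of size $\Omega(\beta_\sigma S_\sigma/s_\sigma)$. Plugging the bounds for $M$, $N$, $m$ into the exponent $a^2/(3mMN)$ and using $m \le s\,\tau_\sigma/1944$ and $s \ge 15 u_\sigma \tau_\sigma (s_\sigma\lambda_\sigma/\beta_\sigma)^2$ gives an exponent of at least (a constant times) $u_\sigma$, so each of the two tail probabilities is at most $e^{-u_\sigma}$, and for the two choices of $\pm_1$ in~\eqref{eq:lem:dem:parameter_martingale_property} this is at most $4e^{-u_\sigma}$ after the obvious union over signs. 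Then I take a union bound over $\sigma \in \cC \times \cV$: by~\eqref{eq:lem:dem:bounded_parameters} we have $|\cC\times\cV| \le (\min_\sigma e^{u_\sigma})^2$, and a standard argument (the exponential rate $u_\sigma$ beats the count because the threshold was tuned with the extra factors of $u_\sigma$ and $|\cC|,|\cV|$ in mind—this is precisely why $u_\sigma$ appears in~\eqref{eq:lem:dem:technical_assumptions:sm}) shows $\sum_\sigma 4e^{-u_\sigma} \cdot |\cC\times\cV| \le 4\max_\sigma e^{-u_\sigma}$ after rescaling $a$ by a further factor, or more honestly one reorganizes so that the final bound is $4\max_\sigma e^{-u_\sigma}$ as stated. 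The key bookkeeping point making the union bound legitimate is that on the complement event $\neg\cE_m$, there is a \emph{first} step and a \emph{first} configuration $\Sigma$ at which tracking fails, and because $\badEL$ has not yet occurred for that $\Sigma$ (else we'd be in case~(a)) and $\cE_i$ held up to that point, all the hypotheses were in force up to the failure step, so the corresponding martingale genuinely deviated.

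\textbf{Main obstacle.} I expect the delicate part to be the construction of the correction terms in $X^\pm_\sigma$ and verifying the supermartingale/submartingale property: one must simultaneously (a) use the Trend hypothesis~\eqref{eq:lem:dem:parameter_martingale_property}, whose error term $h_\sigma(t)/s_\sigma$ must integrate (via the second inequality of~\eqref{eq:lem:dem:derivative}) into at most half of $f_\sigma$, leaving the other half plus $\beta_\sigma$ as genuine slack; (b) control the discretization error from replacing $x_\sigma(t+1/s) - x_\sigma(t)$ by $x'_\sigma(t)/s$ and similarly for $f_\sigma$, which is where $\int|x''_\sigma|, \int|h'_\sigma| \le \lambda_\sigma$ (resp.\ $s_\sigma\lambda_\sigma$) and $h_\sigma(0) \le s_\sigma\lambda_\sigma$ are consumed; and (c) ensure the whole thing still has small enough increments for the martingale inequalities. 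Threading all the constants (the $9$, $15$, $18$, $1944$ in~\eqref{eq:lem:dem:technical_assumptions:sm}, the $3$ in~\eqref{eq:lem:dem:initial_condition}, the final $4$) so that the arithmetic closes is tedious but routine once the martingale is set up correctly; getting the martingale itself right—in particular making sure the `local' failure for $\Sigma$ does not pollute other configurations, which is handled by freezing $X_\sigma$ at the stopping time $I_\sigma$ and by the fact that $\cE_i$ only requires $\badEL[i-1] \cup \cG_i(\Sigma)$—is the conceptual heart of the proof.
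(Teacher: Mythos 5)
Your overall strategy (freeze at the first failure, convert a violation of \eqref{eq:lem:dem:parameter_trajectory} into a large deviation of a bounded super/submartingale, apply Lemmas~\ref{lem:martingale_upper_tail} and~\ref{lem:martingale_lower_tail}, then union bound over $\sigma\in\cC\times\cV$ via \eqref{eq:lem:dem:bounded_parameters}) is the paper's strategy, and your bookkeeping of the first failing step and configuration, and the point that $\badEL$ failures only stop the tracking for that $\Sigma$, are both fine. But there is a genuine gap in the central construction: you build the martingale from the \emph{combined} process $X_\sigma(i\wedge I_\sigma)-\bigl(x_\sigma(t)+f_\sigma(t)/s_\sigma\bigr)S_\sigma-(\text{corrections})$. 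Its one-step change is $Y^+_\sigma(i)-Y^-_\sigma(i)$ minus a deterministic term, and \emph{both} $Y^+_\sigma(i)$ and $Y^-_\sigma(i)$ may be as large as the bound in \eqref{eq:lem:dem:parameter_max_change}, so the increments are two-sided of ``jump'' order. Then Bohman's inequalities do not apply (they need the decrements to be an order of magnitude smaller than the increments, whereas here $M\approx N$), and a symmetric Azuma-type bound would give exponent of order $a^2/(mN^2)$ with $N\approx\beta_\sigma^2S_\sigma/(s_\sigma^2\lambda_\sigma\tau_\sigma u_\sigma)$ and $a\approx\beta_\sigma S_\sigma/s_\sigma$, which the stated hypotheses do not bound below by $u_\sigma$ (in the paper's own application this exponent is even $o(1)$). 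Your assignment of $(M,N)$ is a symptom of the problem: you take $M$ to be the jump bound and $N$ the one-step drift/smoothness bound and claim $M\le N/10$ from \eqref{eq:lem:dem:technical_assumptions:sm}, but that assumption yields exactly the opposite inequality (drift at most a tenth of the jump bound).

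The missing idea is to center $Y^+_\sigma$ and $Y^-_\sigma$ \emph{separately}: on the good event set $Y^{\pm_1\pm_2}_\sigma(i):=Y^{\pm_1}_\sigma(i)-\bigl(y^{\pm_1}_\sigma(t)\mp_2 h_\sigma(t)/s_\sigma\bigr)S_\sigma/s$ (and $0$ after the first failure), and sum to obtain four processes $Z^{\pm_1\pm_2}_\sigma$. Each has decrements bounded by the drift, $M_\sigma=3\lambda_\sigma S_\sigma/s$, and increments bounded by $N_\sigma=2\beta_\sigma^2S_\sigma/(s_\sigma^2\lambda_\sigma\tau_\sigma u_\sigma)$; now $M_\sigma\le N_\sigma/10$ is precisely the first inequality of \eqref{eq:lem:dem:technical_assumptions:sm}, and the asymmetric product $M_\sigma N_\sigma$ (drift times jump) is what makes $a^2/(3mM_\sigma N_\sigma)\ge 3u_\sigma$ with $a=\beta_\sigma S_\sigma/(6s_\sigma)$ and $m\le s\tau_\sigma/1944$. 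A first upward violation of \eqref{eq:lem:dem:parameter_trajectory} is then converted, via the identity $Z^{+-}_\sigma(i^*)-Z^{-+}_\sigma(i^*)=X_\sigma(i^*)-X_\sigma(0)-\tfrac1s\sum_i x'_\sigma(t(i))S_\sigma-\tfrac1s\sum_i h_\sigma(t(i))\cdot 2S_\sigma/s_\sigma$ together with Euler--Maclaurin estimates, the initial condition \eqref{eq:lem:dem:initial_condition} and \eqref{eq:lem:dem:derivative}, into $Z^{+-}_\sigma(i^*)\ge a$ or $Z^{-+}_\sigma(i^*)\le-a$ (and symmetrically $Z^{--}_\sigma,Z^{++}_\sigma$ for a downward violation), after which the union bound closes as you describe. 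This four-fold splitting — the reason the trend hypothesis is stated for $Y^+_\sigma$ and $Y^-_\sigma$ individually rather than for their difference — is what your proposal lacks, and without it the quantitative conclusion \eqref{eq:lem:dem} cannot be reached from the stated assumptions.
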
  
Note that Lemma~\ref{lem:dem} allows us to deduce that $\cE_m \cap \cH_m$ holds whp, if the above conditions $1$--$5$ are satisfied for $n$ large enough, $\cH_m$ holds whp and $u_{\sigma} = \omega(1)$ for all $\sigma \in \cC \times \cV$.  
Furthermore, observe that $\cE_i \cap \neg\badEL$ implies $\cG_{i}(\Sigma)$. 
So, to calculate the expected value in \eqref{eq:lem:dem:parameter_martingale_property} for some $\sigma=(\Sigma,j)$, we may assume that in the previous step all variables are essentially `correct' for $\Sigma$, i.e., for all $\sigma=(\Sigma,j')$ with $j' \in \cV$ we have \eqref{eq:lem:dem:parameter_trajectory}.

As our result is optimized for a slightly different setup, we only briefly compare it with Lemma~$7.3$ in~\cite{BohmanKeevash2010H}. 
One important advantage of Lemma~\ref{lem:dem} is that we state the approximation error differently and allow for a larger family of error functions. 
To this end we point out that by setting $h_{\sigma}(t) := (e_{\sigma} \cdot x_{\sigma} + \gamma_{\sigma})'(t)/4$ and $f_{\sigma}(t) := e_{\sigma}(t)x_{\sigma}(t) -\theta_{\sigma}(\sigma)e_{\sigma}(t)/s_{\sigma}+\theta_{\sigma}(t)$ we obtain essentially the same approximation error as in~\cite{BohmanKeevash2010H}, but by setting e.g.\ $\beta_{\sigma}^{-1} := \lambda_{\sigma} := n^{\epsilon/7}$, and $\tau_{\sigma} := n^{\epsilon/2}$ and  and then choosing $s_{\sigma} \geq n^{\epsilon}$ and $u_{\sigma} = 2k_{\sigma} \log n$, we can weaken several of the assumptions significantly (for example, in the additional technical assumptions we relax $y^{\pm }_{\sigma}(t) = O(1)$ to $y^{\pm }_{\sigma}(t) \leq n^{\epsilon/7}$, their $c$ from $\Omega(1)$ to $\Omega(n^{-\epsilon/7})$, and the lower bound on $m$ from $m>s$ to, say, $m > s n^{-\epsilon}$).  
The simpler formulation of the approximation error using the function $f_{\sigma}$ was suggested by Oliver Riordan. 
Another new ingredient is the introduction of the parameters $\lambda_{\sigma}$, $\beta_{\sigma}$ and $\tau_{\sigma}$, which allow for a trade-off between the approximation error, the boundedness hypothesis and the additional technical assumptions. 
For example, in certain applications this might allow for larger one-step changes in \eqref{eq:lem:dem:parameter_max_change}, since in contrast to Lemma~$7.3$ in~\cite{BohmanKeevash2010H}, we do not rule out the possibility that our parameters are small, say, $o(n^{\epsilon})$. 
Finally, we remark that essentially all of our improvements/modifications also apply to the setup of Lemma~$7.3$ in~\cite{BohmanKeevash2010H}, and for the readers convenience we have stated the resulting improved variant of the differential equation method in Appendix~\ref{apx:dem:general}.

Next, let us briefly discuss the typical usage of certain parameters and give some intuition for a few assumptions  (we refer to Sections~7 and~5.1 in~\cite{BohmanKeevash2010H} and~\cite{Wormald1999DEM} for a further discussion of the setup). 
The parameter $u_{\sigma}$ relates the number of variables and configurations in \eqref{eq:lem:dem:bounded_parameters} to the error probability in \eqref{eq:lem:dem}. 
For example, if we want to track a few variables inside every subset of size $u$, then the left hand side of \eqref{eq:lem:dem:bounded_parameters} is usually dominated by the number of subsets $O(n^{u})$, so $u_{\sigma} = \Omega(u \log n)$ is a convenient choice. 
With a union bound argument in mind, this indicates that $u_{\sigma}$ also restricts the kind of random variables we can hope to track: their expected values $\mu$ should be larger than $u_{\sigma}$ since (in the independent case) large deviations often occur with probabilities that are exponential in $\mu$. 
Up to additional error terms this is essentially reflected by the boundedness hypothesis: 
assuming that the maximum step-wise changes of $X_{\sigma}$ are at least one the right hand side  of~\eqref{eq:lem:dem:parameter_max_change} must also be at least one, which basically means that $S_{\sigma}$ needs to be larger than $u_{\sigma}$, 
where by \eqref{eq:lem:dem:parameter_trajectory} the `scaling' $S_{\sigma}$ is roughly comparable to the expected value. 
Turning to the error terms, if possible, it is convenient to guess some appropriate function $f_{\sigma}(t)$ with $f_{\sigma}(0) \geq \beta_{\sigma}$ and then define $h_{\sigma}(t) := f'_{\sigma}(t)/2$; this satisfies the corresponding constraint in~\eqref{eq:lem:dem:derivative}. 
Furthermore, if possible, it might be useful to choose the parameters such that $\tau_{\sigma} := \lambda_{\sigma}$ and $\beta_{\sigma} := \lambda_{\sigma}^{-1}$ or $\beta_{\sigma} = \Theta(1)$, since this reduces the number of parameters and simplifies several conditions. 
Finally, we remark that the conditions in~\eqref{eq:lem:dem:technical_assumptions:sm} essentially ensure that  $s$ and $m$ are roughly the same and not too small. 
\begin{proof}[Proof of Lemma~\ref{lem:dem}] 
The proof is similar to the proof of Lemma~$7.3$ in~\cite{BohmanKeevash2010H}. 
The important differences here are the more involved definition of the desired event $\cE_m$, the modified error functions $f_{\sigma}(t)$ and $h_{\sigma}(t)$, as well as the new parameters $\lambda_{\sigma}$, $\beta_{\sigma}$ and $\tau_{\sigma}$. The main new ingredients are the configurations $\Sigma$ and bad events $\badE$. 
Let us briefly outline the main ideas of the proof. 
First, using \eqref{eq:lem:dem:parameter_martingale_property} we add and subtract appropriate functions from $Y^{\pm}_{\sigma}(i)$ 
in order to construct super-/submartingales with an initial value of $0$. 
Suppose $\cH_m$ holds and that $\cE_m$ fails for the first time at step $i$. 
Roughly speaking, it suffices to consider the case when $\cG_{i}(\Sigma)$ fails. 
But, if \eqref{eq:lem:dem:parameter_trajectory} is violated, then, as we shall see, this implies that at least one of our super-/submartingales deviates substantially from $0$. 
By Lemma~\ref{lem:martingale_upper_tail} and~\ref{lem:martingale_lower_tail} these large deviations 
are very unlikely, and it turns out that even after using a union bound over all such events the resulting error probability is negligible.

First, we derive some additional inequalities that our functions satisfy. 
Using \eqref{eq:lem:dem:technical_assumptions:h} we see that
\begin{equation}
\label{eq:lem:dem:technical_assumptions:h:function}
\sup_{0 \leq t \leq m/s} h_{\sigma}(t) \leq h_{\sigma}(0) + \int_0^{m/s} |h'_{\sigma}(t)| \ dt \leq 2 s_{\sigma} \lambda_{\sigma}  .
\end{equation}
We claim that the following estimates hold for all $0 \leq i^* \leq m$, writing $t(i)=i/s$ and $t^*=i^*/s$: 
\begin{align}
\label{eq:lem:dem:sum:x}
\frac{1}{s} \sum_{i=0}^{i^*-1} x'_{\sigma}\big(t(i)\big) \cdot S_{\sigma}&=  \left( x_{\sigma}(t^*) - x_{\sigma}(0) \pm \frac{\lambda_{\sigma}}{s} \right) S_{\sigma} \qquad \text{ and }\\
\label{eq:lem:dem:sum:h:upper}
\frac{1}{s} \sum_{i=0}^{i^*-1} h_{\sigma}\big(t(i)\big) \cdot  \frac{2S_{\sigma}}{s_\sigma} &=  \left( 2\int_{0}^{t^*}h_{\sigma}(t)\ dt \pm \frac{2s_{\sigma}\lambda_{\sigma}}{s} \right) \frac{S_{\sigma}}{s_\sigma}  .
\end{align} 
Both bounds are obtained with very similar calculations as in the proof of Lemma~$7.3$ in~\cite{BohmanKeevash2010H},
 using the Euler-Maclaurin summation formula (see e.g.~\cite{Apostol1999}) and then estimating the approximation error with the additional technical assumptions \eqref{eq:lem:dem:technical_assumptions:xy} and \eqref{eq:lem:dem:technical_assumptions:h}; therefore we defer the details to Appendix~\ref{apx:proof:lem:dem}.

Second, we define several random variables and start with $Y^{\pm_1 \pm_2}_{\sigma}$ (recall that this is an abbreviation for four different variables, one for each way of choosing $\pm_1$ and $\pm_2$). 
For all $(\Sigma,j) = \sigma \in \cC \times \cV$, if $\cE_i \cap \neg\badEL \cap \cH_i$ holds we set 
\begin{equation}
\label{eq:lem:dem:def:rv:Y}
Y^{\pm_1 \pm_2}_{\sigma}(i) := Y^{\pm_1}_{\sigma}(i) - \left(y^{\pm_1}_{\sigma}(t) \mp_2 \frac{h_{\sigma}(t)}{s_{\sigma}}\right) \frac{S_{\sigma}}{s}  ,
\end{equation}
and, otherwise (i.e., if $\neg\cE_i \cup \badEL \cup \neg\cH_i$ holds) we define $Y^{\pm_1 \pm_2}_{\sigma}(i)$ to be $0$.
Note that in this case $Y^{\pm_1 \pm_2}_{\sigma}(i') = 0$ for all $i' \geq i$. 
Next, we define
\begin{equation}
\label{eq:lem:dem:def:rv:ZMN}
 Z^{\pm_1\pm_2}_{\sigma}(i) := \sum_{i'=0}^{i-1} Y^{\pm_1\pm_2}_{\sigma}(i')  , \qquad M_{\sigma} :=  \frac{3\lambda_{\sigma} S_{\sigma}}{s} \qquad \mbox{ and } \qquad N_{\sigma} := \frac{2\beta_{\sigma}^2}{s_{\sigma}^2 \lambda_{\sigma} \tau_{\sigma}} \cdot \frac{S_{\sigma}}{u_{\sigma}} 
\end{equation}
We claim that $Z^{\pm-}_{\sigma}(i)$ and $Z^{\pm+}_{\sigma}(i)$ are $(M_{\sigma},N_{\sigma})$-bounded super-/submartingales with $Z^{\pm_1\pm_2}_{\sigma}(0) = 0$ and $M_{\sigma} \leq N_{\sigma}/10$. 
Clearly, $Z^{\pm_1\pm_2}_{\sigma}(0)=0$ holds, and, furthermore, \eqref{eq:lem:dem:technical_assumptions:sm} implies $M_{\sigma} \leq N_{\sigma}/10$. 
Using the trend hypothesis it is easy to establish the super-/submartingale properties, and we deduce that the random variables are $(M_{\sigma},N_{\sigma})$-bounded using the boundedness hypothesis, the additional technical assumptions and \eqref{eq:lem:dem:technical_assumptions:h:function}.  
We defer the straightforward details to Appendix~\ref{apx:proof:lem:dem}.

In the following we estimate the probability of the event $\neg\cE_m \cap \cH_m$.  
Loosely speaking, we focus on the first step $i^* \leq m$ where $\cE_i$ fails, and, in particular, on the $\Sigma \in \cC$ for which $\badEL[i^*-1] \cup \cG_{i^*}(\Sigma)$ fails. 
Note that \eqref{eq:lem:dem:derivative} and \eqref{eq:lem:dem:initial_condition} ensure that $\cG_{0}(\Sigma)$ holds for all $\Sigma \in \cC$, and thus $\cE_{0}$ holds. 
So, considering all $i^* \leq m$, $\Sigma \in \cC$ and using $\cH_m \subseteq \cH_{i^*-1}$, we have 
\begin{equation}
\label{eq:lem:dem:union_bound:0}
\begin{split}
\neg \cE_m \cap \cH_m & \subseteq \bigcup_{1 \leq i^* \leq m} \big[\cH_m \cap \cE_{i^*-1} \cap \neg\cE_{i^*} \big] \\
& \subseteq \bigcup_{1 \leq i^* \leq m} \bigcup_{\Sigma \in \cC} \left[\cH_{i^*-1} \cap \cE_{i^*-1} \cap \neg\big(\badEL[i^*-1] \cup \cG_{i^*}(\Sigma)\big) \right]   . 
\end{split}
\end{equation}
Henceforth we fix $1 \leq i^* \leq m$ and $\Sigma \in \cC$. 
Using that $\cE_i \cap \neg\badEL$ implies $\cG_{i}(\Sigma)$, we see that 
\begin{equation*}
\label{eq:lem:dem:fails}
 \cH_{i^*-1} \cap \cE_{i^*-1} \cap \neg\big(\badEL[i^*-1] \cup \cG_{i^*}(\Sigma)\big) \ = \ \cH_{i^*-1} \cap \cE_{i^*-1} \cap \neg\badEL[i^*-1] \cap \cG_{i^*-1}(\Sigma) \cap \neg\cG_{i^*}(\Sigma)  . 
\end{equation*} 
Observe that when $\cG_{i^*-1}(\Sigma)$ holds, the event $\cG_{i^*}(\Sigma)$ can only fail if $X_{\sigma}(i^*)$ violates  \eqref{eq:lem:dem:parameter_trajectory} for some $\sigma=(\Sigma,j)$ with $j \in \cV$, and for the following calculations we fix such a $\sigma=(\Sigma,j)$.

Suppose that $\cH_{i^*-1} \cap \cE_{i^*-1} \cap \neg\badEL[i^*-1]$ holds and $X_{\sigma}(i^*)$ fails to satisfy \eqref{eq:lem:dem:parameter_trajectory} because $X_{\sigma}(i^*) > (x_{\sigma}(t^*) + f_{\sigma}(t^*)/s_{\sigma}) S_{\sigma}$. 
With a virtually identical calculation as in the proof of the Lemma~$7.3$ in~\cite{BohmanKeevash2010H}, using the relation \eqref{eq:lem:dem:rv_relation} as well as the definitions \eqref{eq:lem:dem:def:rv:Y} and \eqref{eq:lem:dem:def:rv:ZMN}, we obtain
\begin{equation}
\label{eq:lem:dem:ineq:Z+-Z-+}
Z^{+-}_{\sigma}(i^*) - Z^{-+}_{\sigma}(i^*) = X_{\sigma}(i^*)-X_{\sigma}(0) - \frac{1}{s} \sum_{i=0}^{i^*-1} x'_{\sigma}\big(t(i)\big) \cdot S_{\sigma} - \frac{1}{s} \sum_{i=0}^{i^*-1} h_{\sigma}\big(t(i)\big) \cdot  \frac{2S_{\sigma}}{s_\sigma}  ,
\end{equation}
and we include its short proof in Appendix~\ref{apx:proof:lem:dem} for the sake of completeness. 
With this in hand, using the lower bound on $X_{\sigma}(i^*)$, the initial condition \eqref{eq:lem:dem:initial_condition} as well as \eqref{eq:lem:dem:sum:x} and  \eqref{eq:lem:dem:sum:h:upper}, we deduce 
\[
Z^{+-}_{\sigma}(i^*) - Z^{-+}_{\sigma}(i^*)  > \left( f_{\sigma}(t^*) - 2\int_{0}^{t^*}h_{\sigma}(t)\ dt - \frac{\beta_{\sigma}}{3} - \frac{3s_{\sigma}\lambda_{\sigma}}{s} \right) \frac{S_{\sigma}}{s_{\sigma}} \geq \frac{\beta_{\sigma}}{3} \frac{S_{\sigma}}{s_{\sigma}}  ,
\]
where we used \eqref{eq:lem:dem:derivative} and \eqref{eq:lem:dem:technical_assumptions:sm}, i.e., $s \geq 9s_{\sigma} \lambda_{\sigma}/\beta_{\sigma}$, for the last inequality. 
This readily implies 
\begin{equation}
\label{eq:lem:dem:error_event}
Z^{+-}_{\sigma}(i^*) \geq \frac{\beta_{\sigma}}{6} \frac{S_{\sigma}}{s_{\sigma}} =: a \qquad \text{ or } \qquad Z^{-+}_{\sigma}(i^*) \leq - \frac{\beta_{\sigma}}{6} \frac{S_{\sigma}}{s_{\sigma}} = -a  . 
\end{equation} 
Since the variables $Z^{\pm_1\pm_2}_{\sigma}(i)$ are `frozen' once $X_{\sigma}(i)$ leaves the allowed range \eqref{eq:lem:dem:parameter_trajectory}, we deduce that $Z^{+-}_{\sigma}(m) \geq a$ or $Z^{-+}_{\sigma}(m) \leq - a$ holds.

Similarly, if $\cH_{i^*-1} \cap \cE_{i^*-1} \cap \neg\badEL[i^*-1]$ holds and $X_{\sigma}(i^*)$ fails to satisfy \eqref{eq:lem:dem:parameter_trajectory} because $X_{\sigma}(i^*) < (x_{\sigma}(t^*) - f_{\sigma}(t^*)/s_{\sigma}) S_{\sigma}$, with calculations completely analogous to those of the previous case, we deduce that  $Z^{--}_{\sigma}(m) \geq a$ or  $Z^{++}_{\sigma}(m) \leq -a$ holds.

Plugging our findings into \eqref{eq:lem:dem:union_bound:0}, we obtain 
\begin{equation}
\label{eq:lem:dem:union_bound:1}
\neg \cE_m \cap \cH_m \subseteq \bigcup_{\sigma \in \cC \times \cV}  
\big[ \{ Z^{+-}_{\sigma}(m) \geq a \} \cup \{ Z^{--}_{\sigma}(m) \geq a \} \cup \{ Z^{++}_{\sigma}(m) \leq -a \} \cup \{ Z^{-+}_{\sigma}(m) \leq -a \} \big] .
\end{equation}
Recall that $Z^{\pm-}_{\sigma}(i)$ and $Z^{\pm+}_{\sigma}(i)$ are $(M_{\sigma},N_{\sigma})$-bounded super-/submartingales with $M_{\sigma} \leq N_{\sigma} / 10$ and initial values of $0$.
Note that \eqref{eq:lem:dem:technical_assumptions:sm}, i.e., $m > s/(18 s_{\sigma} \lambda_{\sigma}/\beta_{\sigma})$, implies $a < m M_{\sigma}$. 
Therefore, using Lemmas~\ref{lem:martingale_upper_tail} and~\ref{lem:martingale_lower_tail} as well as the definition of $a$, $M_{\sigma}$ and $N_{\sigma}$, we deduce that the probabilities of $Z^{\pm-}_{\sigma}(m) \geq a$ and $Z^{\pm+}_{\sigma}(m) \leq - a$ are each bounded by 
\begin{equation}
\label{eq:lem:dem:error_probability}
\exp\left\{- \frac{a^2}{3 m M_{\sigma} N_{\sigma}} \right\} = \exp\left\{- \frac{1}{648} \frac{s}{m} \tau_{\sigma} u_{\sigma} \right\} 
\leq \exp\left\{- 3u_{\sigma} \right\}  ,
\end{equation}
where we used \eqref{eq:lem:dem:technical_assumptions:sm}, i.e., $m \leq s \cdot \tau_{\sigma}/1944$, for the last inequality. 
Finally, we estimate \eqref{eq:lem:dem:union_bound:1} with a union bound argument. Using \eqref{eq:lem:dem:bounded_parameters} and \eqref{eq:lem:dem:error_probability} we deduce
\[
\PP[\neg \cE_m \cap \cH_m] \leq \sum_{\sigma \in \cC \times \cV} 4 e^{-3u_{\sigma}} \leq 4 \max_{\sigma \in \cC \times \cV }e^{-u_{\sigma}}  ,
\]
and the proof is complete. 
\end{proof}

\section{Proof of the main technical result}
\label{sec:proof}
This section is devoted to the proof of Theorem~\ref{thm:k4_many_edges_close_k3} and is organized as follows. 
First, in Section~\ref{sec:motivation} we sketch some of the ideas used. 
Next, in Section~\ref{sec:formal_setup} we introduce the formal setup. 
In Section~\ref{sec:finishing_the_proof} we then give the proof of Theorem~\ref{thm:k4_many_edges_close_k3}. 
The argument is simple, but relies on two involved combinatorial statements, which are proved in Sections~\ref{sec:trajectory_verification} and~\ref{sec:good_configurations_exist}.

\subsection{Proof idea} 
\label{sec:motivation}
In this section we informally outline some of the ideas used in the proof of Theorem~\ref{thm:k4_many_edges_close_k3}; we stress that most of the notions and statements are refined and made precise in later sections. 
To focus on the main ideas we will assume that $i$, the number of steps of the $K_4$-free process, is large, and furthermore ignore constants as well as $n^{\epsilon}$ factors whenever they are not crucial. Intuitively, this allows us to ignore whether an edge is open or not in our rough calculations (because by \eqref{eq:K4-functions-estimates} and \eqref{eq:open-estimate} we have $|O(i)| \geq n^{2-\epsilon}$). 
Moreover, by \eqref{eq:K4-parameters}, \eqref{eq:degree-estimate}, \eqref{eq:closed-estimate} and \eqref{eq:K4-parameters2} we obtain the approximations $|C_{e_{i+1}}(i)| \approx p^{-1} = n^{2/5}$ and $|\Gamma(v)| \approx |U| \approx n^{3/5}$.

Fix $U \subseteq [n]$ of size $|U| \approx n^{3/5}$. 
Let $T_U(i)$ contain all open pairs in $U$ which would create a copy of $K_3$ in $U$ if they were added to $G(i)$.  
In order to prove Theorem~\ref{thm:k4_many_edges_close_k3} it suffices to establish a lower bound on $|T_U(i)|$.  
Note that for every pair $uv \in T_U(i)$ there exists at least one $w \in U$ such that $w \in \Gamma(u) \cap \Gamma(v)$ in $G(i)$. 
Let $Z_U(i)$ denote all triples $(u,v,w) \in U^3$ where $uv \in O(i)$ and $w \in \Gamma(u) \cap \Gamma(v)$ in $G(i)$. 
As mentioned in the introduction, we expect that the graph generated by the $K_4$-free process shares many properties with the uniform random graph, which in turn is similar to the binomial random graph $G_{n,p}$. 
With this in mind, for each $uv \in T_U(i)$ the number of $w \in U$ with $(u,v,w) \in Z_U(i)$ should typically be roughly $|U|p^{2}=o(1)$.    
In other words, we expect that up to constants $|Z_U(i)| \approx |T_U(i)|$, and so for proving Theorem~\ref{thm:k4_many_edges_close_k3} it should suffice to prove a lower bound on $|Z_U(i)|$. 
For this we intend to apply the differential equation method of Section~\ref{sec:dem}, and so we introduce additional variables, $X_U(i)$ and $Y_U(i)$, in order to keep track of the step-wise changes resulting in elements of $Z_U(i)$. 
More precisely, let $X_U(i)$ denote all $(u,v,w) \in U^3$ with $\{uv,vw,uw\} \subseteq O(i)$ and let $Y_U(i)$ denote all $(u,v,w) \in U^3$ with $\{uv,vw\} \subseteq O(i)$ and $uw \in E(i)$.

We remark that $|X_U(i)|$ can easily be tracked with the differential equation method. 
However, $|Y_U(i)|$ causes some difficulties with respect to the one-step changes, but these are easy to resolve using the ideas of Section~\ref{sec:dem:idea}. 
Guided by our random graph intuition (and our convention that we ignore $n^{\epsilon}$ factors), we expect the `scaling' $S_{\sigma}$ of $|Y_U(i)|$, which essentially corresponds to the expected value of $|Y_U(m)|$, to satisfy $S_{\sigma} \approx |U|^3p \approx n^{7/5}$. 
Recall that in order to use the differential equation method, the maximum one-step change must be bounded by \eqref{eq:lem:dem:parameter_max_change}, i.e., by roughly $|U|^2p \approx n^{4/5}$.
A triple $(u,v,w) \in X_U(i)$ is only added to $Y_U(i+1)$ if $e_{i+1} = uw$, and so at most $|U| \approx n^{3/5}$ triples  are added in one step, which causes no problems. 
Note that triples $(u,v,w) \in Y_U(i)$ are removed, i.e., not in $Y_U(i+1)$, if $e_{i+1} \in \{uv,vw\}$ or $\{uv,vw\} \cap C_{e_{i+1}}(i) \neq \emptyset$. 
Therefore the number of triples removed in one step can be up to $(1+|C_{e_{i+1}}(i)|) \cdot \max_{v \in U} |\Gamma(v) \cap U| \approx p^{-1} \cdot \max_{v \in U} |\Gamma(v) \cap U|$. 
So, as long as, say, $\max_{v \in U} |\Gamma(v) \cap U| \leq n^{1/3}$ holds, the one-step changes are bounded by $p^{-1}n^{1/3} = n^{11/15} = n^{4/5 - 1/15}$, which is small enough for using the differential equation method. 
Note that although we expect $|\Gamma(v) \cap U| \approx |U|p \approx n^{1/5}$, we can not even guarantee $|\Gamma(v) \cap U| =o(|U|)$ for every $v \in U$ and $U \subseteq [n]$ with $|U| \approx n^{3/5}$, since $|\Gamma(v)| \approx |U|$. 
But, as it turns out, by removing some vertices from $U$ we can overcome this issue. 
Indeed, as we shall see, from Lemma~\ref{lem:edges_bounded} and~\ref{lem:large_degree_bounded} we can deduce that $U$ typically contains a subset $U'$ of size $|U'| \approx |U|$ such that for every $v \in U'$ we have  $|\Gamma(v) \cap U'| \leq n^{1/3}$. 
Of course, at the beginning of the $K_4$-free process, when we start tracking the variables, we do not know which $U' \subseteq U$ satisfy these properties in later steps. 
So, intuitively we `try' all possible $U'$ for each $U$ and `stop' tracking as soon as the maximum degree inside $U'$ is too large. 
This approach is sound because our previous line of argument suggests that for every $U$ at least one `good' $U'$ with bounded degree exists. 
Using the terminology of Section~\ref{sec:dem:idea}, this idea can be formalized as follows. 
We introduce configurations of the form $\Sigma=(U,U')$, where $U' \subseteq U$ satisfies $|U'| \approx |U|$, and for each such $\Sigma$ we track the variables only inside $U'$. Furthermore, we define the bad event $\badE$ such that it holds whenever the maximum degree inside $U'$ is too large. 
These ideas would already suffice to track the variables $|X_{U'}(i)|$ and $|Y_{U'}(i)|$ through the evolution of the $K_4$-free process using Lemma~\ref{lem:dem}.

Obtaining a lower bound on $|Z_U(i)|$ with the differential equation method is more difficult and actually the main technical challenge of our proof. 
In addition to the bound on the one-step changes, we also need to deal with the issue that although the next edge satisfies $e_{i+1}=vw$, the triple $(u,v,w) \in Y_{U}(i)$ is not always added to $Z_{U}(i+1)$, since adding $vw$ might close $uv$. 
This is an important difference to bounding the independence number, where this issue does not arise, cf.~\cite{Bohman2009K3,BohmanKeevash2010H}. 
For the other variables they track, Bohman and Keevash~\cite{BohmanKeevash2010H} use a union bound argument (based on density considerations) to overcome this issue. 
However, in contrast to~\cite{BohmanKeevash2010H} we have to handle this for \emph{every} large subset, and therefore a slight variation of their approach is unlikely to work here. 
To overcome the difficulties arising, we substantially refine the definition of configurations and bad events. 
Unfortunately, the additional ideas used are rather technical and at this point an informal sketch would probably fail to be of much help.

\subsection{Formal setup}
\label{sec:formal_setup}
In this section we present the formal setup which is used in our application of the differential equation method. 
First, we define the configurations as well as the variables we want to track in every subset of a certain size. 
Afterwards we introduce the high probability events $\cH_i$ and `bad' events $\badE$.

\subsubsection{Configurations and random variables}
\label{sec:main-proof:configs_variables}
Recall that by \eqref{eq:K4-parameters2} we have $u = \gamma n p t_{\max} = \gamma \mu n^{3/5} \sqrt[5]{\log n}$. We set
\begin{equation}
\label{eq:K4-parameters3}
k:=u/15 = \gamma/15 \cdot n p t_{\max} = \gamma \mu /15 \cdot  n^{3/5} \sqrt[5]{\log n}  .
\end{equation} 
Now, we define the configurations $\cC$ to be the set of all $\Sigma=(U,\Pi)$, where $U \in \binom{[n]}{u}$ and $\Pi=(A,B,C)$ with disjoint $A,B,C \in \binom{U}{k}$. 
For the sake of brevity we write $K = K(\Sigma) := A \cup B \cup C$.

\begin{figure}[tp]
	\centering
  \setlength{\unitlength}{1bp}%
  \begin{picture}(116.49, 87.81)(0,0)
  \put(0,0){\includegraphics{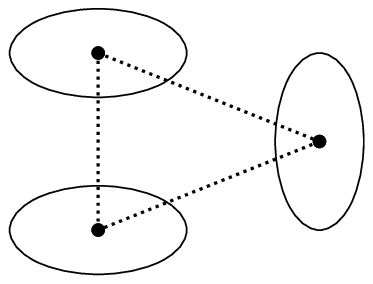}}
  \put(25.71,71.23){\fontsize{11.38}{13.66}\selectfont \makebox[0pt]{$u$}}
  \put(49.99,67.16){\fontsize{11.38}{13.66}\selectfont \makebox[0pt]{$A$}}
  \put(94.83,59.67){\fontsize{11.38}{13.66}\selectfont \makebox[0pt]{$C$}}
  \put(101.06,45.71){\fontsize{11.38}{13.66}\selectfont \makebox[0pt]{$w$}}
  \put(25.71,19.75){\fontsize{11.38}{13.66}\selectfont \makebox[0pt]{$v$}}
  \put(49.99,13.30){\fontsize{11.38}{13.66}\selectfont \makebox[0pt]{$B$}}
  \end{picture}%
	\hspace{2.75em}
  \setlength{\unitlength}{1bp}%
  \begin{picture}(116.49, 87.81)(0,0)
  \put(0,0){\includegraphics{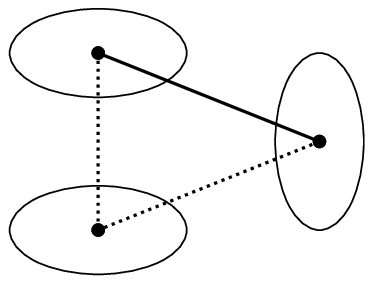}}
  \put(25.71,71.23){\fontsize{11.38}{13.66}\selectfont \makebox[0pt]{$u$}}
  \put(49.99,67.16){\fontsize{11.38}{13.66}\selectfont \makebox[0pt]{$A$}}
  \put(94.83,59.67){\fontsize{11.38}{13.66}\selectfont \makebox[0pt]{$C$}}
  \put(101.06,45.71){\fontsize{11.38}{13.66}\selectfont \makebox[0pt]{$w$}}
  \put(25.71,19.75){\fontsize{11.38}{13.66}\selectfont \makebox[0pt]{$v$}}
  \put(49.99,13.30){\fontsize{11.38}{13.66}\selectfont \makebox[0pt]{$B$}}
  \end{picture}%
	\hspace{2.75em}
  \setlength{\unitlength}{1bp}%
  \begin{picture}(116.49, 87.81)(0,0)
  \put(0,0){\includegraphics{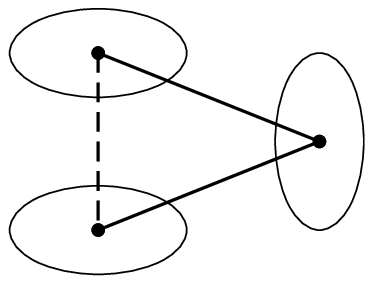}}
  \put(25.71,71.23){\fontsize{11.38}{13.66}\selectfont \makebox[0pt]{$u$}}
  \put(49.99,67.16){\fontsize{11.38}{13.66}\selectfont \makebox[0pt]{$A$}}
  \put(94.83,59.67){\fontsize{11.38}{13.66}\selectfont \makebox[0pt]{$C$}}
  \put(101.06,45.71){\fontsize{11.38}{13.66}\selectfont \makebox[0pt]{$w$}}
  \put(25.71,19.75){\fontsize{11.38}{13.66}\selectfont \makebox[0pt]{$v$}}
  \put(49.99,13.30){\fontsize{11.38}{13.66}\selectfont \makebox[0pt]{$B$}}
  \end{picture}%
	\caption{\label{fig:triples}$(u,v,w)$ is open/intermediate/partial with respect to $\Sigma=(U,\Pi)$ with $\Pi=(A,B,C)$. Solid lines represent edges, dotted lines open pairs and dashed lines pairs that are open or closed. }
\end{figure}

We track several random variables for every $\Sigma \in \cC$, which count certain triples $(u,v,w) \in A \times B \times C$ with $uv \in O(i) \cup C(i)$ and $uw,vw \in E(i)\cup O(i)$. 
The sets of triples which are called \emph{open wrt.\ $\Sigma$} and \emph{intermediate wrt.\ $\Sigma$}, respectively, are defined as
\begin{align}
\label{eq:def:open_triples}
	\opens &:= \big\{ (u,v,w) \in A \times B \times C \;:\; \{uv,vw,uw\} \subseteq O(i) \big\} \quad \text{ and}\\
\label{eq:def:intermediate_triples}
	\interms &:= \big\{ (u,v,w) \in A \times B \times C \;:\; \{uv,vw\} \subseteq O(i) \wedge uw \in E(i) \big\}  .
\end{align}
Finally, we introduce a set $\parts$, whose triples are called \emph{partial wrt.\ $\Sigma$}, which will satisfy 
\begin{equation}
\label{eq:def:partial_inclusion}
\parts \subseteq \big\{ (u,v,w) \in A \times B \times C \;:\; uv \in O(i) \cup C(i) \wedge \{uw,vw\} \subseteq E(i) \big\}  .
\end{equation}
We define $\parts$ inductively as follows. 
At the beginning we set $\parts[0] := \emptyset$. 
Suppose the process chooses $e_{i+1}=xy \in O(i)$ as the next edge in step $i+1$. 
Then a triple $(u,v,w) \in \interms$ is \emph{added} to $\parts[i+1]$, i.e., is in $\parts[i+1]$, if $vw = e_{i+1}$, $uv \notin C_{vw}(i)$,  and there is no $w' \in C$ such that $(u,v,w') \in \parts$. 
Furthermore, a triple $(u,v,w) \in \parts$ is \emph{removed}, i.e., not in $\parts[i+1]$, or \emph{ignored}, i.e., remains in $\parts[i+1]$, according to the following rules (see also Figure~\ref{fig:pairs_R3} on page~\pageref{fig:pairs_R3}):   
\begin{center}
\begin{minipage}[b]{0.95\linewidth}
\begin{description}
\item[Case 1.] If $uv = e_{i+1}$, then the triple $(u,v,w)$ is removed,\vspace{-0.2em}
\item[Case 2.] If $e_{i+1}=xy \in C_{uv}(i)$ and $e_{i+1} \cap uv = \emptyset$, then the triple $(u,v,w)$ is\vspace{-0.5em}
	\begin{enumerate}
		\item[(R2)] removed if $\min\{|\Gamma(x) \cap \Gamma(y) \cap A|,|\Gamma(x) \cap \Gamma(y) \cap B|\} \leq kpn^{-20\epsilon}$ holds in $G(i)$, and\vspace{-0.2em}
		\item[(I2)] ignored otherwise.\vspace{-0.2em}
	\end{enumerate} 
\item[Case 3.] If $e_{i+1}=xy \in C_{uv}(i)$ and $e_{i+1} \cap uv = x$, then the triple $(u,v,w)$ is\vspace{-0.5em}
	\begin{enumerate}
		\item[(R3a)] removed if $|\Gamma(y) \cap K| \leq p^{-1}n^{-15\epsilon}$,\vspace{-0.2em}
		\item[(R3b)] removed if there exists a vertex $z \in \Gamma(x) \cap \Gamma(y)$ such that $\{u,v\} \setminus \{x\} \subseteq \Gamma(y) \cap \Gamma(z) \cap K$ and $|\Gamma(y) \cap \Gamma(z) \cap K| \leq kpn^{-20\epsilon}$ hold in $G(i)$, and\vspace{-0.2em}
		\item[(I3)] ignored otherwise.\vspace{-0.30em}
	\end{enumerate}
\end{description}
\end{minipage}
\end{center}
The way in which triples are added ensures that for every $u \in A$ and $v \in B$ there is at most one triple in $\parts$ which contains $uv$. 
This is an important ingredient and will be exploited repeatedly in our proof. 
The rules for removing triples from $\parts$ ensure that the step-wise changes are not too large (see Section~\ref{sec:proof:verification:partial:bounded} for more details). 
Intuitively, the `ignored' cases occur only infrequently and, as we shall later see, their contribution to $\parts$ turns out to be negligible.  
With the bound on the codegree given by \eqref{eq:codegree-estimate} in mind, these rules are rather natural, with the possible exception of (R3b), which is inspired by~\cite{Bohman2009K3}. 
Finally, the inclusion \eqref{eq:def:partial_inclusion} clearly holds and we remark that every $(u,v,w) \in \parts$ with $uv \in C(i)$ was ignored in some step $i' \leq i$.

We are mainly interested in $\parts$, the other sets $\opens$ and $\interms$ are needed in order to keep track of the step-wise changes resulting in elements of $\parts$, cf.\ Figure~\ref{fig:triples}. 
In order to prove Theorem~\ref{thm:k4_many_edges_close_k3}, it suffices to obtain a lower bound on the number of triples in $\parts$ which can still be completed to a copy of $K_3$. 
For this we define $\partOs$ as follows:
\begin{equation}
\label{eq:def:partial_open}
\partOs := \big\{ (u,v,w) \in \parts \;:\; uv \in O(i) \big\}  .
\end{equation}
The definition of $\partOs$ may seem overly complicated, and one could think that a simpler definition, say similar to $\opens$ and $\interms$, could be sufficient as well.
It turns out (see Lemmas~\ref{lem:dem:trajectories} and~\ref{lem:dem:config}) that this is in fact an important part of our proof: on the one hand we \emph{need} to relax the definition, i.e.,  to ignore some triples and allow for $uv \in C(i)$, in order to ensure that the  step-wise changes are not too large, and on the other hand we must use special rules for removing triples from $\parts$ in order to ensure that the expected changes are still `correct' and furthermore to make sure that we do not ignore too many triples, i.e., to guarantee that $|\parts| \approx |\partOs|$. 
Together with the events defined in the next section this allows us to track $|\parts|$ using our variant of the differential equation method, and, furthermore, to obtain a lower bound on $|\partOs|$.

\subsubsection{`Bad' events and high probability events}
\label{sec:main-proof:bad_high_probability_events}
In this section we introduce the high probability events $\cH_i$ and `bad' events $\badE$. 
Intuitively, $\cH_i$ denotes the event that in addition to the results of Bohman and Keevash~\cite{BohmanKeevash2010H} the density results of Section~\ref{sec:density} hold. 
More precisely, for every $0 \leq i \leq m$ we define the event $\cH_i$ as
\begin{equation}
\label{eq:def:high_probability_Hi}
\cH_i := \cD_i \cap \cG_i \cap \cJ_i  \cap \cM_i \cap \cN_i \cap \cQ_i  , 
\end{equation}
where $\cG_i$ and $\cJ_i$ are defined as in Theorem~\ref{thm:BohmanKeevash2010H}, $\cD_i$, $\cM_i$ and $\cN_i$ as in Lemmas~\ref{lem:edges_bounded}--\ref{lem:codegree_into_set_bounded}, and $\cQ_i$ as in Lemma~\ref{lem:deletion_lemma_subgraph_count_K3edge}. 
Clearly, $\cH_i$ depends only on the first $i$ steps, and $\cH_{i+1} \subseteq \cH_i$ holds.

\begin{figure}[tp]
	\centering
  \setlength{\unitlength}{1bp}%
  \begin{picture}(168.19, 87.81)(0,0)
  \put(0,0){\includegraphics{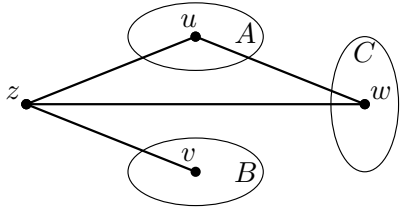}}
  \put(101.68,67.16){\fontsize{11.38}{13.66}\selectfont \makebox[0pt]{$A$}}
  \put(146.52,59.67){\fontsize{11.38}{13.66}\selectfont \makebox[0pt]{$C$}}
  \put(152.75,45.71){\fontsize{11.38}{13.66}\selectfont \makebox[0pt]{$w$}}
  \put(80.24,22.59){\fontsize{11.38}{13.66}\selectfont \makebox[0pt]{$v$}}
  \put(101.68,14.44){\fontsize{11.38}{13.66}\selectfont \makebox[0pt]{$B$}}
  \put(13.86,45.71){\fontsize{11.38}{13.66}\selectfont \makebox[0pt]{$z$}}
  \put(80.24,72.87){\fontsize{11.38}{13.66}\selectfont \makebox[0pt]{$u$}}
  \end{picture}%
 	\caption{\label{fig:quadruple:Xi}A quadruple $(u,v,w,z) \in \Xi_{\Sigma}(i)$: adding the pair $vw$ closes $uv$ (solid lines represent edges). The vertex $z$ may also be in one of the vertex classes, i.e., in $A \cup B \cup C$. }
\end{figure}

Next, we introduce the `bad' event $\badE$. 
To this end recall that we use the abbreviation $K = K(\Sigma) = A \cup B \cup C$. 
Now, for every $0 \leq i \leq m$ and $\Sigma \in \cC$ we define the event $\badE$ as 
\begin{equation}
\label{eq:def:bad_Bi}
\badE := \badE[1,i] \cup \badE[2,i] \cup \badE[3,i]  , 
\end{equation}
where we define as $\badE[1,i]$, $\badE[2,i]$ and $\badE[3,i]$ as follows:
\vspace{-0.5em}
\begin{flushright}
\begin{minipage}[c]{0.95\linewidth}
\begin{center}
\begin{minipage}[b]{0.95\linewidth}
\begin{enumerate}
\item[$\cB_{1,i}(\Sigma)$ :=] in $G(i)$ the maximum degree inside $K$ is larger than $kpn^{5\epsilon}$, \vspace{-0.4em} 
\item[$\cB_{2,i}(\Sigma)$ :=] 
the number of $xy \in \binom{[n]}{2}$ with $\min\{|\Gamma(x) \cap \Gamma(y) \cap A|,|\Gamma(x) \cap \Gamma(y) \cap B|\} \geq kpn^{-20\epsilon}$ is larger than $k n^{-20 \epsilon}$ in $G(i)$, and\vspace{-0.5em}
\item[$\cB_{3,i}(\Sigma)$ :=] in $G(i)$ there exists a pair $xy \in \binom{[n]}{2}$ that satisfies $xy \in \{uw, zu, zv, zw\}$ for more than $k^2pn^{-15\epsilon}$ quadruples $(u,v,w,z) \in \Xi_{\Sigma}(i)$, where $\Xi_{\Sigma}(i)$ contains all quadruples $(u,v,w,z) \in A \times B \times C \times [n]$ with $z \notin \{u,v,w\}$ and $\{uw, zu, zv, zw\} \subseteq E(i)$. 
\vspace{-0.4em}
\end{enumerate} 
\end{minipage}
\end{center}
\end{minipage}
\end{flushright}
Note that $\badE$ depends only on the first $i$ steps and is increasing, so $\cB_{i}(\Sigma) \subseteq \cB_{i+1}(\Sigma)$ holds. 
Loosely speaking, the definition of $\badE$ ensures that whenever it holds certain `bad' substructures do not appear during the first $i$ steps. 
This allows us to track the random variables we defined in Section~\ref{sec:main-proof:configs_variables} with our variant of the differential equation method.

In the following we briefly give some intuition for the definition of $\badE$. 
The event $\neg\badE[1,i]$ ensures that the degree inside $K$ is not too large, which is mainly used to bound the one-step changes. 
Loosely speaking, $\neg\badE[2,i]$ ensures that not too many triples are ignored because of (I2), which will be important for showing  $|\parts| \approx |\partOs|$. 
Finally, together with $\cQ_i$ the event $\neg\badE[3,i]$ essentially implies that the number of triples $(u,v,w) \in \interms$ with $uv \in C_{vw}(i)$ is not too large (observe that for every such triple there exists $z \in [n]$ for which $(u,v,w,z) \in \Xi_{\Sigma}(i)$, cf.\ Figures~\ref{fig:triples} and~\ref{fig:quadruple:Xi}). 
This will be crucial for showing that the expected number of triples added to $\parts[i+1]$ in one step is not too small (see Section~\ref{sec:proof:verification:partial:trend} for more details).

\subsection{Proof of Theorem~\ref{thm:k4_many_edges_close_k3}} 
\label{sec:finishing_the_proof}
We use the following two statements to prove Theorem~\ref{thm:k4_many_edges_close_k3}.  
Intuitively, the first (probabilistic) lemma  implies that for `good' configurations $\Sigma$ the variables $|\opens|$, $|\interms|$ and $|\parts|$ are tightly concentrated. 
Roughly speaking, the second (deterministic) lemma states that for every set $U$ of size $u$ there exists a good configuration $\Sigma^* = (U,\Pi)$ for which $\partOCs \subseteq \partCs$ is large. 
\begin{lemma}
\label{lem:dem:trajectories}
Suppose $\cC$, $\badE$, $\cH_i$ as well as $\opens$, $\interms$ and $\parts$ are defined as in Sections~\ref{sec:main-proof:configs_variables} and~\ref{sec:main-proof:bad_high_probability_events}. 
Furthermore, define $m=m(n)$ and $p=p(n)$ as in~\eqref{eq:K4-parameters}, and $k=k(n)$ as in~\eqref{eq:K4-parameters3}. 
Set $s_{o} := n^{2\epsilon}$ and $t=t(i) := i/(n^2p)$. 
Define $q(t)$ and $f(t)$ as in~\eqref{eq:K4-functions}. 
For all $0 \leq i^* \leq m$ and $\Sigma \in \cC$, let $\cG_{i^*}(\Sigma)$ denote the event that for every $0 \leq i \leq i^*$ we have 
\begin{align}
\label{eq:lem:dem:trajectories:opens}
|\opens| &= \left( {q(t)}^3	 \pm \frac{f(t){q(t)}^2}{s_{o}} \right) k^3  ,\\
\label{eq:lem:dem:trajectories:interms}
|\interms| &= \left( 2 t {q(t)}^2 \pm \frac{f(t){q(t)}}{s_{o}} \right) k^3p  \text{ and}\\
\label{eq:lem:dem:trajectories:parts}
|\parts| &= \left( 2 t^2 q(t) \pm \frac{f(t)}{s_{o}} \right) k^3p^2  .
\end{align}  
Next, let $\cE_{i^*}$ denote the event that for all $0 \leq i \leq i^*$ and $\Sigma \in \cC$ the event $\cB_{i-1}(\Sigma) \cup \cG_{i}(\Sigma)$ holds. 
Then $\cE_m \cap \cH_m$ holds with high probability. 
\end{lemma}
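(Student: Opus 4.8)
The plan is to derive Lemma~\ref{lem:dem:trajectories} directly from our Differential Equation Method, Lemma~\ref{lem:dem}, applied with step parameter $s:=n^2p$, with $\cC$ as in Section~\ref{sec:main-proof:configs_variables}, and with $\cV$ a three-element set indexing the families $\opens$, $\interms$, $\parts$. For the index corresponding to $\opens$ we take $X_\sigma(i):=|\opens|$, $S_\sigma:=k^3$, $x_\sigma(t):=q(t)^3$ and $f_\sigma(t):=f(t)q(t)^2$; for $\interms$ we take $X_\sigma(i):=|\interms|$, $S_\sigma:=k^3p$, $x_\sigma(t):=2tq(t)^2$ and $f_\sigma(t):=f(t)q(t)$; and for $\parts$ we take $X_\sigma(i):=|\parts|$, $S_\sigma:=k^3p^2$, $x_\sigma(t):=2t^2q(t)$ and $f_\sigma(t):=f(t)$, in every case with $s_\sigma:=s_o=n^{2\epsilon}$. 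The increments $Y^{\pm}_\sigma(i)$ are the natural ``added''/``removed'' counts resulting from the definitions in Section~\ref{sec:main-proof:configs_variables} --- with $Y^+_\sigma\equiv 0$ for $\opens$ (open triples are never created), and with $Y^-_\sigma$ for $\parts$ counting only triples that are \emph{removed}, not \emph{ignored}, in step $i+1$ --- so that \eqref{eq:lem:dem:rv_relation} holds; no further variables are needed, since the trend of $\interms$ refers back only to $\opens$ and that of $\parts$ only to $\interms$. With this dictionary the events $\cG_{i^*}(\Sigma)$, $\cE_{i^*}$, $\badE$ and $\cH_i$ of the present lemma coincide with the corresponding objects in Lemma~\ref{lem:dem} (noting $\badEL=\cB_i(\Sigma)$, since $\badE$ is increasing). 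The remaining parameters are chosen along the lines suggested after Lemma~\ref{lem:dem}, e.g.\ $\beta_\sigma:=1$, $\lambda_\sigma:=n^{\epsilon/7}$, $\tau_\sigma:=n^{\epsilon/2}$, $u_\sigma:=2u\log n$ and $h_\sigma:=f_\sigma'/2$; the trend functions, read off from $q(t)=e^{-16t^5}$, are $y^-_\sigma=240t^4q^3$ for $\opens$, $y^+_\sigma=2q^2$ and $y^-_\sigma=320t^5q^2$ for $\interms$, and $y^+_\sigma=4tq$ and $y^-_\sigma=160t^6q$ for $\parts$, so that the first identity in \eqref{eq:lem:dem:derivative}, $x'_\sigma=y^+_\sigma-y^-_\sigma$, holds identically.

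With this setup fixed, the bookkeeping hypotheses of Lemma~\ref{lem:dem} are immediate. The initial conditions \eqref{eq:lem:dem:initial_condition} hold because $G(0)$ is empty, so all $k^3$ triples of $A\times B\times C$ are open and none are intermediate or partial, matching $q(0)^3=1$ and $2\cdot 0\cdot q(0)^2=2\cdot 0^2\cdot q(0)=0$; condition \eqref{eq:lem:dem:bounded_parameters} holds since $|\cC|\le\binom{n}{u}\binom{u}{k}^3\le n^{2u}\le e^{u_\sigma}$ and $|\cV|=3$; the second relation in \eqref{eq:lem:dem:derivative} follows because $W\ge 500$ makes $fq^2$, $fq$, $f$ increasing and gives $f_\sigma(0)=1=\beta_\sigma$, whence $f_\sigma=2\int_0^t h_\sigma\,d\tau+f_\sigma(0)\ge 2\int_0^t h_\sigma\,d\tau+\beta_\sigma$; and the technical assumptions \eqref{eq:lem:dem:technical_assumptions:sm}--\eqref{eq:lem:dem:technical_assumptions:h} reduce to elementary estimates using $s=n^2p$, $m/s=t_{\max}=\mu(\log n)^{1/5}$, the explicit forms of $q,f$, and \eqref{eq:K4-functions-estimates}, \eqref{eq:K4-constants:Wepsmu}.

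The substance is the trend hypothesis \eqref{eq:lem:dem:parameter_martingale_property} and the boundedness hypothesis \eqref{eq:lem:dem:parameter_max_change}, both of which are only required on $\cE_i\cap\neg\badEL\cap\cH_i$; there we may assume \eqref{eq:lem:dem:trajectories:opens}--\eqref{eq:lem:dem:trajectories:parts} for $\Sigma$ at step $i$, all of Theorem~\ref{thm:BohmanKeevash2010H}, all the density lemmas of Section~\ref{sec:density}, and $\neg\badE[1,i]\cap\neg\badE[2,i]\cap\neg\badE[3,i]$. Conditioned on $\cF_i$ the next edge is uniform on $O(i)$, so for $\opens$ and $\interms$ the computation of $\EE[Y^{\pm}_\sigma(i)\mid\cF_i]$ is comparatively clean: the leading contributions come from the closure counts controlled by \eqref{eq:open-estimate}--\eqref{eq:closed-intersection-estimate} and reproduce the $y^{\pm}_\sigma$ above, while the $O(1/|O(i)|)$ terms and all fluctuations are absorbed into $h_\sigma(t)/s_\sigma=f_\sigma'(t)/(2n^{2\epsilon})$ thanks to the $e^{Wt^5}$ growth of $f$; the boundedness bounds hold because $\neg\badE[1,i]$ caps every degree inside $K$ by $kpn^{5\epsilon}$ which, together with the codegree bound \eqref{eq:codegree-estimate}, forces every one-step change to be $n^{1+o(1)}\ll n^{-\Omega(\epsilon)}S_\sigma/u_\sigma$. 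The hard case --- the main obstacle, carried out in Section~\ref{sec:trajectory_verification} --- is $\parts$: one must show that the expected number of triples \emph{added} to $\parts[i+1]$ is $(4tq(t)\pm h_\sigma(t)/s_\sigma)S_\sigma/s$, which uses the ``at most one partial triple per pair $uv$'' property together with a bound, via $\neg\badE[3,i]$ and $\cQ_i$, on the intermediate triples $(u,v,w)$ with $uv\in C_{vw}(i)$ (each of which yields a quadruple in $\Xi_\Sigma(i)$); and that the expected number \emph{removed} is $(160t^6q(t)\pm h_\sigma(t)/s_\sigma)S_\sigma/s$, which uses $\neg\badE[2,i]$, $\neg\badE[3,i]$, $\cM_i$, $\cN_i$ and \eqref{eq:codegree-estimate} to show that only negligibly many triples are \emph{ignored} under (I2) or (I3); for the same reason the thresholds $kpn^{-20\epsilon}$ in (R2), (R3b) and $p^{-1}n^{-15\epsilon}$ in (R3a) are calibrated precisely so that each one-step change of $|\parts|$ is at most $n^{-\Omega(\epsilon)}S_\sigma/u_\sigma$, giving \eqref{eq:lem:dem:parameter_max_change} for $\parts$. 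Granting these two hypotheses, Lemma~\ref{lem:dem} yields $\PP[\neg\cE_m\cap\cH_m]\le 4\max_\sigma e^{-u_\sigma}=o(1)$; and since $\cH_m$ holds whp by Theorem~\ref{thm:BohmanKeevash2010H} together with Lemmas~\ref{lem:edges_bounded}, \ref{lem:large_degree_bounded}, \ref{lem:codegree_into_set_bounded} and \ref{lem:deletion_lemma_subgraph_count_K3edge}, this gives $\PP[\cE_m\cap\cH_m]=1-o(1)$, as claimed.
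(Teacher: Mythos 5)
Your proposal follows essentially the same route as the paper's proof: it applies Lemma~\ref{lem:dem} with $s=n^2p$, the same scalings $k^3$, $k^3p$, $k^3p^2$, the same trajectories and error functions $f(t)q(t)^2$, $f(t)q(t)$, $f(t)$ with $h_\sigma=f_\sigma'/2$, and the same division of labour in which $\neg\cB_{3,i}(\Sigma)$ together with $\cQ_i$ controls the intermediate triples with $uv\in C_{vw}(i)$ in the added-trend for $\parts$, while $\neg\cB_{2,i}(\Sigma)$, $\cN_i$ and the codegree bound show that ignored triples contribute negligibly to the removed-trend, and $\neg\cB_{1,i}(\Sigma)$ plus the calibrated thresholds in (R2), (R3a), (R3b) give the boundedness hypothesis. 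The only deviations are the slightly different (and equally valid) choices of $\lambda_\sigma$, $\tau_\sigma$, $u_\sigma$, which do not affect the argument.
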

Let us give some intuition for the trajectories our variables follow. 
As usual, we expect $G(i)$ to share many properties with the binomial random graph $G_{n,\rho}$ with edge-density $\rho= 2tp$, since $\binom{n}{2}\rho \approx i = tn^2p$. 
Furthermore, by \eqref{eq:open-estimate} the proportion of pairs which are open roughly equals $q(t)$. 
So, with this in mind, we expect $|\opens| \approx {q(t)}^3 k^3$, $|\interms| \approx 2 t p \: {q(t)}^2 k^3$ and $|\parts| \approx 2 (tp)^2 q(t)k^3$. 
We remark that for the partial triples we `lose' a factor of two since we only count those triples where the edge $uw$ appears before $vw$. 
\begin{lemma}
\label{lem:dem:config}
Suppose $\cC$, $\badE$, $\cH_i$ as well as $\parts$ and $\partOs$ are defined as Sections~\ref{sec:main-proof:configs_variables} and~\ref{sec:main-proof:bad_high_probability_events}. 
Furthermore, define $m=m(n)$ and $p=p(n)$ as in~\eqref{eq:K4-parameters},  $u=u(n)$ as in~\eqref{eq:K4-parameters2} and $k=k(n)$ as in~\eqref{eq:K4-parameters3}. 
Set $t=t(i) := i/(n^2p)$ and define $q(t)$ as in~\eqref{eq:K4-functions}. 
For $n$ large enough, if $G(i)$ was generated by the $K_4$-free process and satisfies $\cH_i$, then for every $U \in \binom{[n]}{u}$ there exists $\Sigma^*=(U,\Pi) \in \cC$ such that $\neg\cB_{i-1}(\Sigma^*)$ and $|\partCs \setminus \partOCs| \leq k^3p^2n^{-10\epsilon}$ hold. 
\end{lemma}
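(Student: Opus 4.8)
The plan is to prove the statement by constructing $\Sigma^*$ explicitly: given the process history through step $i$ (which satisfies $\cH_i$) and a set $U \in \binom{[n]}{u}$, we first discard the few vertices of $U$ of large degree into $U$, and then choose $A,B,C$ inside the remaining set by a first-moment argument. Since $\cH_i \subseteq \cN_i$, applying Lemma~\ref{lem:large_degree_bounded} (i.e.\ $\cN_i$) with $A=U$ and $d=kpn^{5\epsilon}$ shows that $D := D_{U,kpn^{5\epsilon}}$ satisfies $|D| < 16\epsilon^{-1}(kpn^{5\epsilon})^{-1}u = O(p^{-1}n^{-5\epsilon}) = o(k)$, using $u=15k$ and the bounds from~\eqref{eq:K4-parameters},~\eqref{eq:K4-parameters2} and~\eqref{eq:K4-parameters3}. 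Put $U' := U \setminus D$, so $|U'| \geq u - o(k) \geq 3k$. For \emph{any} disjoint $A,B,C \in \binom{U'}{k}$, the resulting $\Sigma^* = (U,(A,B,C))$ automatically satisfies $\neg\cB_{1,i-1}(\Sigma^*)$: each vertex of $K=K(\Sigma^*) \subseteq U'$ has fewer than $kpn^{5\epsilon}$ neighbours in $U \supseteq K$ in $G(i)$, hence, since $E(j) \subseteq E(i)$ for $j\leq i$, also in every $G(j)$ with $j \leq i-1$. It remains to choose $A,B,C \subseteq U'$ so that $\neg\cB_{2,i-1}(\Sigma^*)$, $\neg\cB_{3,i-1}(\Sigma^*)$ and $|\partCs \setminus \partOCs| \leq k^3p^2 n^{-10\epsilon}$ all hold.

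To do so we pick $A,B,C$ uniformly at random among disjoint $k$-subsets of $U'$ and show that each of these three events fails with probability below $\tfrac{1}{3}$; a union bound then yields a valid choice. For $\cB_{2,i-1}(\Sigma^*)$ we bound the expected number of pairs $xy$ with $\min\{|\Gamma(x)\cap\Gamma(y)\cap A|,|\Gamma(x)\cap\Gamma(y)\cap B|\} \geq kpn^{-20\epsilon}$: splitting the pairs according to their codegree into $U'$, those with small codegree contribute negligibly by a Chernoff/Poisson upper-tail estimate (the threshold grows like a power of $n$), while the pairs with large codegree into $U'$ are few by $\cM_i$ (applied with ground set $U'$), and for these the simultaneous requirement of large codegree into \emph{both} $A$ and $B$ removes the remaining contribution; combined with the degree and codegree bounds from $\cG_i\cap\cD_i$ this gives an expectation $< \tfrac{1}{3}\,kn^{-20\epsilon}$, whence the claim by Markov's inequality. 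The event $\cB_{3,i-1}(\Sigma^*)$ is treated in the same spirit, bounding the expected number of edges contained in $\{uw,zu,zv,zw\}$ for more than $k^2pn^{-15\epsilon}$ quadruples of $\Xi_{\Sigma^*}(i-1)$ using $\cQ_i$ and $\cD_i$.

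For the bound on $|\partCs \setminus \partOCs|$, recall that every partial triple $(u,v,w)$ with $uv \in C(i)$ was \emph{ignored} at some earlier step, i.e.\ retained by rule (I2) or (I3); we bound these two families separately. A triple ignored by (I2) at step $i'+1$ has a witness pair $e_{i'+1}=xy$ with $xy\in C_{uv}(i')$, $xy\cap uv=\emptyset$ and $\min\{|\Gamma(x)\cap\Gamma(y)\cap A|,|\Gamma(x)\cap\Gamma(y)\cap B|\}>kpn^{-20\epsilon}$; as codegrees only grow, $xy$ is then counted by $\cB_{2,i-1}(\Sigma^*)$, so $\neg\cB_{2,i-1}(\Sigma^*)$ leaves at most $kn^{-20\epsilon}$ such witnesses, and since $u,v$ must be common neighbours of $x$ and $y$ (with at most one $w$ per pair $uv$), each witness accounts for at most $|\Gamma(x)\cap\Gamma(y)\cap A|\cdot|\Gamma(x)\cap\Gamma(y)\cap B|$ triples, which the codegree estimate in $\cG$ --- sharpened by $\neg\cB_{1,i-1}(\Sigma^*)$ in the degenerate cases where $w$ or an endpoint of $xy$ lies in $K$ --- bounds by a quantity making the total $o(k^3p^2n^{-10\epsilon})$. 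For triples ignored by (I3) we extract, from the completed copy of $K_4$, a witness consisting of the closing edge $e_{i'+1}=xy$ (with $x\in K$) and its fourth vertex $z$; the failure of (R3a) and (R3b) forces $|\Gamma(y)\cap K|>p^{-1}n^{-15\epsilon}$ and $|\Gamma(y)\cap\Gamma(z)\cap K|>kpn^{-20\epsilon}$, and one bounds both the number of such witnesses and the number of triples per witness using $\neg\cB_{1,i-1}(\Sigma^*)$ (degrees into $K$), $\cQ_i$ together with $\neg\cB_{3,i-1}(\Sigma^*)$ (associating the triples with quadruples of $\Xi_{\Sigma^*}(i)$), and the degree/codegree estimates in $\cH_i$, again yielding a total of $o(k^3p^2n^{-10\epsilon})$. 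Taking the union bound over the three events then completes the proof.

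I expect the quantitative estimates of the third paragraph --- in particular bounding the triples ignored by rule (I3), and, closely related, establishing $\neg\cB_{2,i-1}(\Sigma^*)$ --- to be the main obstacle. The difficulty is that the codegree thresholds occurring in the bad events ($kpn^{-20\epsilon}$ and $p^{-1}n^{-15\epsilon}$) lie below the range in which $\cN_i$ directly controls the number of vertices of prescribed (co)degree into a set of the relevant size, so one cannot simply bound the maximum degree in the associated ``witness graphs''; instead one must exploit the joint structure of the defining conditions --- large codegree into $A$ and into $B$ at once, or the presence of the auxiliary vertex $z$ and the resulting quadruple in $\Xi_{\Sigma^*}(i)$ --- and combine $\cM_i$, $\cQ_i$, $\neg\cB_{1,i-1}$ and $\neg\cB_{3,i-1}$ with the basic degree and codegree estimates. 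Tracking the various $n^{\pm c\epsilon}$ factors so that all totals stay below $k^3p^2n^{-10\epsilon}$ (and below $kn^{-20\epsilon}$) is where most of the bookkeeping lies.
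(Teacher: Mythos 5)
Your first step (deleting the $o(k)$ vertices with at least $kpn^{5\epsilon}$ neighbours in $U$, which gives $\neg\cB_{1,i-1}$ for any $K\subseteq U'$) matches the paper's removal of the set $L$, and your accounting of the ignored triples via witnesses for (I2)/(I3) and the ``one $w$ per pair $uv$'' observation is also in the spirit of the paper. The genuine gap is in the second paragraph: choosing $A,B,C$ uniformly at random in $U'$ and verifying $\neg\cB_{2,i-1}(\Sigma^*)$ by a first-moment bound cannot be pushed through with the events available in $\cH_i$. The pairs that matter are those whose codegree into $U'$ is already a constant factor above the threshold $kpn^{-20\epsilon}$; for such a pair, random sampling puts a $\Theta(1)$ fraction of its common neighbours into each of $A$ and $B$ with probability $1-o(1)$, so the ``simultaneous requirement of large codegree into both $A$ and $B$'' removes nothing, and you would need the \emph{total} number of such pairs to be below $\tfrac13 kn^{-20\epsilon}$. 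But $\cM_i$ only bounds \emph{disjoint} families of such pairs (giving roughly $p^{-1}n^{20\epsilon}$), not their total number: a single vertex $x$ with $|\Gamma(x)\cap U'|=\Theta(k)$ can lie in very many pairs $xy$ of codegree $\geq kpn^{-20\epsilon}$ into $U'$ without violating $\cD_i$ (since $p\,|\Gamma(x)\cap U'|\,b\,n^{2\epsilon}\geq b\cdot kpn^{-20\epsilon}$ for any $b$), the codegree bound \eqref{eq:codegree-estimate}, or $\cN_i$ (whose hypothesis $d\geq 2apn^{2\epsilon}$ fails at $d=kpn^{-20\epsilon}$ when $a=\Theta(k)$, as you yourself note). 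So $\cH_i$ alone does not yield the expectation bound you need, and Markov's inequality has nothing to work with.

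What closes this hole in the paper is precisely the ingredient your proposal omits: $A,B,C$ are \emph{not} chosen randomly inside $U'$, but greedily from the neighbourhoods of the vertices with most neighbours in $U$ (the sets $N_A,N_B,N_C$, with $A$ additionally avoiding $\Gamma(I_{BC})$). This construction produces a small exceptional set $I$ with two properties that drive all three remaining estimates: every $v\notin I$ satisfies $|\Gamma(v)\cap K|\leq p^{-1}n^{10\epsilon}$ (\eqref{eq:vnotinI:degree_bounded}), and every $v\in I$ has no neighbours in $A$ or none in $B$ (\eqref{eq:vinI:neighbourhood}). The first bound is what allows $\cN_i$ and $\cM_i$ (applied with ground set $\Gamma(x)\cap K$, not $U'$) to control the number of partners $y$ of a fixed $x$ and hence the total number of pairs in $\cB_{2,i}$, the quadruples in $\Xi_{\Sigma^*}(i)$ for $\cB_{3,i}$, and the (I3)-witnesses; the second disposes of the vertices for which no such degree bound is possible. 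For a random $K\subseteq U'$ one only has $|\Gamma(v)\cap K|=O(k)$, which is far too weak, so your treatments of $\cB_2$, $\cB_3$ and the (I3)-ignored triples all inherit the same obstruction. In short, the proposal correctly identifies where the difficulty sits but lacks the structural construction of $\Pi$ (and the associated set $I$) that the proof actually requires.
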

The proofs of these lemmas are rather involved and therefore deferred to Sections~\ref{sec:trajectory_verification} and~\ref{sec:good_configurations_exist}. 
Next we show how they imply our main technical result. 
\begin{proof}[Proof of Theorem~\ref{thm:k4_many_edges_close_k3}] 
By Lemma~\ref{lem:dem:trajectories} the event $\cE_m \cap \cH_m$ holds whp, and so by monotonicity whp $\cE_i \cap \cH_i$ holds for every $0 \leq i \leq m$. 
Therefore, using Lemma~\ref{lem:dem:config} we deduce that, whp, for every $0 \leq i \leq m$ and $U \in \binom{[n]}{u}$ there exists a configuration $\Sigma^*=(U,\Pi)$ such that 
\begin{equation}
\label{eq:eq:main_proof:bound_ignored}
|\partCs \setminus \partOCs| \leq k^3p^2n^{-10\epsilon} 
\end{equation}
and $\cE_i \cap \neg\badEC[i-1]$ hold. 
In the following we show that for every such `good' $\Sigma^*=(U,\Pi)$ the set $\partOCs$ is large. 
Note that $t=i/(n^2p) \geq 1$ for $i \geq n^2p$, and recall that $q(t) \geq n^{-\epsilon/2}$ and $f(t) \leq n^{\epsilon}$ by \eqref{eq:K4-functions-estimates}. 
So, since $\cE_i \cap \neg\badEC[i-1]$ implies $\cG_{i}(\Sigma^*)$, using \eqref{eq:K4-functions-estimates} and \eqref{eq:lem:dem:trajectories:parts}, for $n^2p \leq  i \leq m$ we have 
\begin{equation}
\label{eq:eq:main_proof:lower_bound}
|\partCs| \geq k^{3}(tp)^2q(t)  . 
\end{equation}
Thus, using $\partOCs \subseteq \partCs$, \eqref{eq:eq:main_proof:bound_ignored} and \eqref{eq:eq:main_proof:lower_bound} as well as \eqref{eq:K4-parameters2} and \eqref{eq:K4-parameters3}, i.e., $k=u/15$ and $\delta = 1/7000$, we deduce that, whp, for every $n^2p \leq i \leq m$ and $U \in \binom{[n]}{u}$ there exists $\Sigma^*=(U,\Pi)$ such that 
\begin{equation*}
\begin{split}
|\partOCs| &=  |\partCs| - |\partCs \setminus \partOCs| \\
&\geq k^{3}(tp)^2q(t) - k^{3}p^2n^{-10\epsilon} \geq k^{3}(tp)^2q(t)/2 \geq \delta u^{3}(tp)^2q(t)  . 
\end{split}
\end{equation*}
Note that for every triple $(u,v,w) \in \partOCs$ we have $\{u,v,w\} \subseteq U$ as well as $uv \in O(i)$ and $\{uw,vw\} \subseteq E(i)$. 
Recall that the inductive definition of $\partCs$ ensures that every $uv$ with $u \in A$ and $v \in B$ is contained in at most one of the triples in $\partCs$. 
So, using $\partOCs \subseteq \partCs$, we see that for every pair of distinct triples $(u,v,w),(u',v',w') \in \partOCs$ we have $uv \neq u'v'$. 
We deduce that, whp, for every $n^2p \leq i \leq m$ and $U \in \binom{[n]}{u}$ there exists $\Sigma^*=(U,\Pi)$ such that the set 
\[
\tilde{T}_{U}(i) := \{ uv \in O(i) \;:\; \text{there exists $w \in U$ such that $(u,v,w) \in \partOCs$}\}
\]
has size $|\tilde{T}_{U}(i)| = |\partOCs| \geq \delta u^{3}(tp)^2q(t)$ and contains only open pairs $uv \in \tilde{T}_{U}(i)$ such that adding $uv$ to $G(i)$ completes a triangle in $U$. 
Therefore $\cT_m$ holds whp, which proves the theorem. 
\end{proof}

\section{Trajectory verification}
\label{sec:trajectory_verification}
In this section we prove Lemma~\ref{lem:dem:trajectories}, which is our main probabilistic statement. 
We work with the `natural' filtration given by the $K_4$-free process, where $\cF_i$ corresponds to the first $i$ steps, and  tacitly assume that $n$ is sufficiently large whenever necessary. 
Note that $\cH_i$ depends only on the first $i$ steps, and that $\cH_m$ holds whp (using  Theorem~\ref{thm:BohmanKeevash2010H}, Lemmas~\ref{lem:edges_bounded}--\ref{lem:codegree_into_set_bounded} and~\ref{lem:deletion_lemma_subgraph_count_K3edge}). 
We apply the differential equation method (Lemma~\ref{lem:dem}) with $s := n^2p$ and the purely formal set of variables $\cV:=\{X,Y,Z\}$, where for all $\Sigma \in \cC$ we let $X_{(\Sigma,X)}(i) := |\opens|$, $X_{(\Sigma,Y)}(i) := |\interms|$ and $X_{(\Sigma,Z)}(i) := |\parts|$. For the sake of clarity we will usually work directly with the variables under consideration, e.g.\ with $|\interms|$ instead of $X_{(\Sigma,Y)}(i)$. 
Recalling that $\badE$ is monotone increasing, we see that $\badE = \badEL$. In addition, for all $\sigma \in \cC \times \cV$ we define 
\begin{equation}
\label{def:dem:parameters}
\beta_{\sigma}=1  , \qquad u_{\sigma} := u n^{\epsilon} = \omega(1)  , \qquad s_{\sigma} := s_{o} = n^{2\epsilon} \qquad \text{ and } \qquad \lambda_{\sigma} := \tau_{\sigma} := n^{\epsilon}  .
\end{equation} 
Note that using $k=u/15$, with this parametrization the boundedness hypothesis \eqref{eq:lem:dem:parameter_max_change} simplifies  to 
\begin{equation}
\label{eq:dem:triples:max_bound}
Y^{\pm}_{\sigma}(i)  \leq  \frac{\beta_{\sigma}^2}{s_{\sigma}^2 \lambda_{\sigma} \tau_{\sigma}} \cdot  \frac{S_{\sigma}}{u_{\sigma}} = \frac{S_{\sigma}}{15k n^{7\epsilon}} . 
\end{equation}

The remainder of this section is organized as follows. 
First, in Section~\ref{sec:pm_inequalities} we give some inequalities for dealing with expressions containing $\pm$ symbols. 
Afterwards, in Sections~\ref{sec:proof:verification:open}--\ref{sec:proof:verification:partial} we verify the trend and boundedness hypotheses of Lemma~\ref{lem:dem} for the open, intermediate and partial triples. 
Finally, in Section~\ref{sec:proof:verification:end} we finish the proof of Lemma~\ref{lem:dem:trajectories} by checking the remaining conditions.

\subsection{Estimates for expressions containing $\pm$ operators} 
\label{sec:pm_inequalities}
The following inequalities can easily be verified using elementary calculus. 
Recall that $a \pm b$ is a shorthand for $\{a + xb : -1 \leq x \leq 1 \}$, where multiple occurrences of $\pm$ are treated independently (see Section~\ref{sec:notation}). 
\begin{lemma}%
\label{lem:pm_inequalities}%
Suppose $0 \leq x \leq 1/2$. Then
\begin{equation}
\label{eq:pm_inverse}
(1 \pm x)^{-1} \subseteq 1 \pm 2 x   . 
\end{equation}
\end{lemma}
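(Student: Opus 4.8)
The statement to prove is the elementary inequality $(1 \pm x)^{-1} \subseteq 1 \pm 2x$ for $0 \leq x \leq 1/2$. Here $(1\pm x)^{-1}$ denotes the set $\{(1+\xi x)^{-1} : -1 \le \xi \le 1\}$, and we must show every element of this set lies in the interval $[1-2x, 1+2x]$.

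\textbf{Proof plan.} The plan is to work directly from the definition of the $\pm$-interval notation. First I would observe that for fixed $x$ with $0 \le x \le 1/2$, the map $\xi \mapsto (1+\xi x)^{-1}$ is well-defined and continuous on $[-1,1]$, since $1 + \xi x \ge 1 - x \ge 1/2 > 0$ there; moreover it is monotone decreasing in $\xi$. Hence the image $(1\pm x)^{-1} = \{(1+\xi x)^{-1} : \xi \in [-1,1]\}$ is exactly the closed interval with endpoints $(1+x)^{-1}$ and $(1-x)^{-1}$, i.e.\ $[(1+x)^{-1}, (1-x)^{-1}]$. So it suffices to check the two endpoint inequalities $(1+x)^{-1} \ge 1 - 2x$ and $(1-x)^{-1} \le 1 + 2x$.

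For the lower endpoint: $(1+x)^{-1} - (1-2x) = \frac{1 - (1-2x)(1+x)}{1+x} = \frac{1 - (1 - x - 2x^2)}{1+x} = \frac{x + 2x^2}{1+x} = \frac{x(1+2x)}{1+x} \ge 0$, which holds for all $x \ge 0$. For the upper endpoint: $(1+2x) - (1-x)^{-1} = \frac{(1+2x)(1-x) - 1}{1-x} = \frac{1 + x - 2x^2 - 1}{1-x} = \frac{x(1-2x)}{1-x} \ge 0$, where we use $0 \le x \le 1/2$ (so $1-2x \ge 0$) and $x \le 1/2 < 1$ (so $1-x > 0$). Combining, $[(1+x)^{-1},(1-x)^{-1}] \subseteq [1-2x, 1+2x] = 1 \pm 2x$, which is the claim.

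\textbf{Main obstacle.} There is essentially no obstacle here; the only thing requiring a moment's care is the bookkeeping of the interval notation — confirming that $(1\pm x)^{-1}$ really is the interval between the two extreme values (using monotonicity of $t \mapsto t^{-1}$ on positive reals together with monotonicity of $\xi \mapsto 1+\xi x$), rather than something larger. The hypothesis $x \le 1/2$ is used in exactly one place, to guarantee $1 - 2x \ge 0$ in the upper-endpoint computation; the positivity $x \ge 0$ handles the lower endpoint. One could alternatively note the slicker identity $(1+\xi x)^{-1} = 1 - \frac{\xi x}{1+\xi x}$ and bound $\left|\frac{\xi x}{1+\xi x}\right| \le \frac{x}{1-x} \le 2x$ for $x \le 1/2$, which proves the containment in one line; I would likely present this version for brevity.
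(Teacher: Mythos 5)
Your proof is correct; the paper itself gives no argument for this lemma, dismissing it as "easily verified using elementary calculus," and your endpoint computation (or the one-line bound $\bigl|\tfrac{\xi x}{1+\xi x}\bigr| \le \tfrac{x}{1-x} \le 2x$ for $x \le 1/2$) is exactly the elementary verification being left implicit. Both your uses of the hypotheses ($x \ge 0$ for the lower endpoint, $x \le 1/2$ for the upper) are correctly placed, so there is nothing to add.
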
 
The next lemma provides estimates for products of a special form. 
\begin{lemma}%
\label{lem:pm_product_inequality_extended}%
Suppose that $x,y, f_x, f_y, g, h \geq 0$ and $g \leq 1$. 
Then $f_x + x g\leq h/2$ implies 
\begin{equation}
\label{eq:pm_product_ext1}
( x \pm f_{x} ) (1 \pm g) \subseteq x \pm h  . 
\end{equation}
Furthermore, $x f_y + y f_x + f_x f_y + x y g \leq h/2$ implies  
\begin{equation}
\label{eq:pm_product_ext2}
( x \pm f_{x} ) ( y \pm f_{y} ) (1 \pm g) \subseteq xy \pm h  . 
\end{equation}
\end{lemma}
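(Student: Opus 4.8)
\textbf{Plan of proof for Lemma~\ref{lem:pm_product_inequality_extended}.}

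The plan is to reduce everything to the definition of the $\pm$-interval and to elementary bounds on the magnitude of error terms. Recall that by the convention in Section~\ref{sec:notation}, an element of $(x \pm f_x)(1 \pm g)$ is of the form $(x + \theta_1 f_x)(1 + \theta_2 g)$ for some $\theta_1, \theta_2 \in [-1,1]$, and each occurrence of $\pm$ is treated independently. So to prove \eqref{eq:pm_product_ext1} I would simply expand this product as $x + \theta_2 x g + \theta_1 f_x + \theta_1 \theta_2 f_x g$ and bound the absolute value of the error $\theta_2 x g + \theta_1 f_x + \theta_1 \theta_2 f_x g$. Using $|\theta_i| \le 1$ and $0 \le g \le 1$ (so that $f_x g \le f_x$), this error is at most $x g + f_x + f_x g \le x g + 2 f_x$; hmm, that is not quite $\le h$ directly from $f_x + xg \le h/2$. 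Let me instead bound it as $|xg| + |f_x| + |f_x g| \le xg + f_x + f_x \le xg + f_x + (h/2)$ — still not clean. The cleaner route: bound the error by $f_x(1 + g) + xg \le 2 f_x + x g$, and since $f_x + xg \le h/2$ gives $2f_x + xg \le 2f_x + 2xg \le h$ is false in general. Actually the right grouping is $|\theta_1 f_x + \theta_1 \theta_2 f_x g| \le f_x + f_x = $ no. The correct observation is that $\theta_1 f_x (1 + \theta_2 g)$ has absolute value at most $f_x \cdot (1+g) \le 2 f_x$, and $\theta_2 xg$ has absolute value at most $xg$; but $2f_x + xg$ need not be $\le h$. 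So I would instead group as: the product equals $x(1 + \theta_2 g) + \theta_1 f_x (1 + \theta_2 g)$; the first term lies in $x \pm xg$ and the second in $\pm f_x(1+g) \subseteq \pm 2 f_x$; hmm. Let me reconsider — actually since $g \le 1$, we have $f_x g \le f_x$ only helps if we can afford $f_x + f_x + xg$. I think the intended bound really is: error $\le f_x + xg + f_x g \le f_x + xg + f_x = 2f_x + xg$, and this is $\le 2(f_x + xg) \le h$. Yes — $2f_x + xg \le 2f_x + 2xg = 2(f_x + xg) \le 2 \cdot (h/2) = h$. That works. So \eqref{eq:pm_product_ext1} follows.

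For \eqref{eq:pm_product_ext2} I would proceed the same way: an element is $(x + \theta_1 f_x)(y + \theta_2 f_y)(1 + \theta_3 g)$. First expand $(x + \theta_1 f_x)(y + \theta_2 f_y) = xy + \theta_2 x f_y + \theta_1 y f_x + \theta_1 \theta_2 f_x f_y$, so this lies in $xy \pm (xf_y + yf_x + f_xf_y)$. Write $E := xf_y + yf_x + f_xf_y$, so the product so far is $xy + \theta E$ for some $|\theta| \le 1$. Then multiplying by $(1 + \theta_3 g)$ gives $xy + \theta_3 xy g + \theta E + \theta \theta_3 E g$, whose error has absolute value at most $xyg + E + Eg \le xyg + E + E = E + 2\,$? — again apply the same trick: error $\le E(1+g) + xyg \le 2E + xyg \le 2E + 2xyg = 2(E + xyg) = 2(xf_y + yf_x + f_xf_y + xyg) \le 2 \cdot (h/2) = h$, using the hypothesis. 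Hence the product lies in $xy \pm h$, as claimed.

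The main (and only real) obstacle here is purely bookkeeping: making sure the grouping of error terms is done so that the factor-of-$2$ slack in the hypotheses ($\le h/2$ rather than $\le h$) exactly absorbs the cross terms coming from $g \le 1$, and being careful that "multiple occurrences of $\pm$ are treated independently" is respected — i.e., the final interval on the right-hand side genuinely contains every element obtained by independent choices of $\theta_1, \theta_2, \theta_3 \in [-1,1]$, not merely the ones with matched signs. Since all the bounds used are just the triangle inequality together with $|\theta_i| \le 1$, $0 \le g \le 1$, and nonnegativity of $x, y, f_x, f_y$, no calculus beyond this is needed despite the lemma's phrasing; I would present the two displays \eqref{eq:pm_product_ext1} and \eqref{eq:pm_product_ext2} as consequences of these elementary estimates.
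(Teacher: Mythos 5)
Your argument is correct and is essentially the paper's proof: both rest on expanding the product, bounding the error by $xg+f_x+f_xg\leq 2(f_x+xg)\leq h$ (respectively $xyg+E+Eg\leq 2(E+xyg)\leq h$ with $E=xf_y+yf_x+f_xf_y$), using only $g\leq 1$ and nonnegativity. The only cosmetic difference is that the paper first records the two-factor inclusion $(x\pm f_x)(y\pm f_y)\subseteq xy\pm E$ and then deduces \eqref{eq:pm_product_ext2} by substituting into \eqref{eq:pm_product_ext1}, whereas you re-expand directly; the estimates are identical.
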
 
\begin{proof}
Using $x,y, f_x, f_y \geq 0$ we see that
\begin{equation}
\label{eq:pm_product_proof_1}
( x \pm f_{x} ) ( y \pm f_{y} ) \subseteq xy \pm (x f_y + y f_x + f_x f_y)  .
\end{equation}
Plugging $y = 1$ and $f_{y}=g$ into \eqref{eq:pm_product_proof_1}, and using $g \leq 1$, $x g \geq 0$ as well as $f_x + x g\leq h/2$, we obtain
\[
( x \pm f_{x} ) ( 1 \pm g ) \subseteq x \pm (x g + f_x + f_x g) \subseteq x \pm 2(f_x + x g) \subseteq x \pm h  ,
\]
which establishes \eqref{eq:pm_product_ext1}. 
Finally, using \eqref{eq:pm_product_proof_1} and plugging $x' = xy$ and $f_x' = x f_y + y f_x + f_x f_y$ together with $f'_x+x'g \leq h/2$ into \eqref{eq:pm_product_ext1} gives \eqref{eq:pm_product_ext2}, which completes the proof.  
\end{proof}

\subsection{Open triples}
\label{sec:proof:verification:open}
In every open triple $(u,v,w) \in \opens$ all pairs are open, i.e., $\{uv,vw,uw\} \subseteq O(i)$.  
We define 
\begin{gather}
\label{def:open_triples:x}
x(t) := {q(t)}^3 = e^{-48 t^{5}}  , \qquad x^{+}(t) := 0  , \qquad x^{-}(t) := 240 t^4 {q(t)}^3 \\
\label{def:open_triples:fx_hx}
f_x(t) := f(t) {q(t)}^2 = e^{(W-32) t^{5}+W t} \quad \text{ and } \quad h_x(t) := f'_x(t)/2  .
\end{gather} 
For every $\sigma = (\Sigma,X)$ with $\Sigma \in \cC$ we set $x_{\sigma}(t) := x(t)$, $y_{\sigma}^{\pm}(t) := x^{\pm}(t)$, $f_{\sigma}(t) := f_x(t)$, $h_{\sigma}(t) := h_x(t)$ and $S_{\sigma} := k^3$. 
Moreover, we define $X^+_{\Sigma}(i) := \opens[i+1] \setminus \opens$ and $X^-_{\Sigma}(i) := \opens \setminus \opens[i+1]$. 
Formally we then set $Y^{\pm}_{(\Sigma,X)}(i) := |X^{\pm}_{\Sigma}(i)|$, but henceforth we work directly with $|X^{\pm}_{\Sigma}(i)|$. 
In the following we check the trend and boundedness hypothesis for the open triples.

\subsubsection{Trend hypothesis} 
\label{sec:proof:verification:open:trend}
Note that whenever $\cE_i \cap \neg\badE \cap \cH_i$ holds we have to prove 
\begin{equation}
\label{eq:open_triples:th_goal}
\EE[|X^{\pm_1}_{\Sigma}(i)| \mid \cF_i ] = \left(x^{\pm_1}(t) \pm \frac{h_x(t)}{s_{o}} \right) \frac{k^3}{n^2p}  .
\end{equation}

\textbf{Triples added in one step.} 
We start by verifying \eqref{eq:open_triples:th_goal} for $X^{+}_{\Sigma}(i)$. 
Clearly, adding an edge to $G(i)$ can not create new open triples. So $|X^+_{\Sigma}(i)| = 0 = x^+(t)$ always holds, which settles this case.

\textbf{Triples removed in one step.} 
Next, we prove \eqref{eq:open_triples:th_goal} for $X^{-}_{\Sigma}(i)$. 
Recall that $e_{i+1} \in O(i)$ is added to $G(i)$. 
Observe that a triple $(u,v,w) \in \opens$ is removed, i.e., not in $\opens[i+1]$, if it either contains $e_{i+1}$, or one of its pairs is in $C_{e_{i+1}}(i)$, which is equivalent to $e_{i+1} \in C_{uv}(i) \cup C_{vw}(i) \cup C_{uw}(i)$. 
Thus, the number of choices for $e_{i+1}$ which remove $(u,v,w)$ from $\opens$ is $|C_{uv}(i) \cup C_{vw}(i) \cup C_{uw}(i)| \pm 3$. 
Recall that the $K_4$-free process chooses the edge $e_{i+1}$ uniformly at random from the open pairs in $G(i)$. 
Thus, whenever $\cE_i \cap \neg\badE \cap \cH_i$ holds we have 
\begin{equation}
\label{eq:open_triples:th0}
\EE[|X^-_{\Sigma}(i)| \mid \cF_i ] = \sum_{(u,v,w) \in \opens} \frac{|C_{uv}(i) \cup C_{vw}(i) \cup C_{uw}(i)| \pm 3}{|O(i)|}   .
\end{equation}
Recall that $\cE_i \cap \neg\badE$ implies $\cG_{i}(\Sigma)$, and thus for calculating the expected one-step changes we may assume that $|\opens|$ satisfies \eqref{eq:lem:dem:trajectories:opens}.  
Moreover, $\cH_i$ implies that the inequalities \eqref{eq:open-estimate}, \eqref{eq:closed-estimate} and \eqref{eq:closed-intersection-estimate} hold. 
In addition, note that $s_{e} = n^{1/12-\epsilon}$ and \eqref{eq:K4-functions-estimates} imply $f(t)/s_e = o(1)$. 
Substituting the former estimates into \eqref{eq:open_triples:th0}, and using $n^{1/6} = \omega(s_e)$, $f(t) \geq 1$ as well as Lemma~\ref{lem:pm_inequalities}, we deduce that 
\begin{equation*}
\label{eq:open_triples:th1}
\begin{split}
\EE[|X^-_{\Sigma}(i)| \mid \cF_i ] & = \frac{( {q(t)}^3	 \pm f_x(t)/s_{o}) k^3 \cdot  [3(40 t^{4}q(t) \pm 9f(t)/s_e)p^{-1} \pm 3 n^{-1/6} p^{-1} \pm 3]}{(1 \pm 3f(t)/s_e)q(t)n^2/2}\\
& \subseteq  (1 \pm 6f(t)/s_e ) \cdot ( {q(t)}^2	 \pm f(t){q(t)}/s_{o} ) \cdot ( 240 t^{4}q(t) \pm 70f(t)/s_e ) \cdot k^3/(n^2p)  . 
\end{split}
\end{equation*}
Therefore the desired bound, i.e., \eqref{eq:open_triples:th_goal} for $X^{-}_{\Sigma}(i)$, follows if 
\begin{equation}
\label{eq:open_triples:th_sufficient_goal}
(1 \pm 6f(t)/s_e) \cdot ( {q(t)}^2	 \pm f(t){q(t)}/s_{o} ) \cdot \left(240 t^{4}q(t) \pm 70f(t)/s_e\right) \subseteq x^-(t) \pm h_x(t)/s_{o}  .
\end{equation}
Now, using $f(t)=o(s_e)$ and Lemma~\ref{lem:pm_product_inequality_extended}, observe that to prove \eqref{eq:open_triples:th_sufficient_goal} it is enough to show 
\begin{equation*}
\label{eq:open_triples:th_sufficient}
140 f(t) {q(t)}^2 s_{o}/s_e + 480 t^4 f(t) {q(t)}^2 + 140 {f(t)}^2 {q(t)}/s_e + 2880 t^4  f(t) {q(t)}^3 s_{o}/s_e \leq h_x(t)  .
\end{equation*}
Note that by \eqref{eq:K4-functions-estimates} all terms involving $s_{e}$ are in fact $o(1)$. So, using $f_x(t)=f(t) {q(t)}^2$ it suffices if 
\begin{equation*}
\label{eq:open_triples:th_sufficient_shorther}
1 + 480 t^4 f_x(t) \leq h_x(t)  ,
\end{equation*}
which is easily seen to be true, since $h_x(t) \geq W t^4 f_x(t) + W/2$ by \eqref{eq:K4-constants:Wepsmu} and \eqref{def:open_triples:fx_hx}. 
To summarize, we have verified the trend hypothesis~\eqref{eq:open_triples:th_goal} for the open triples.

\subsubsection{Boundedness hypothesis} 
\label{sec:proof:verification:open:bounded}
Second, we verify the boundedness hypothesis~\eqref{eq:dem:triples:max_bound} whenever $\cE_i \cap \neg\badE \cap \cH_i$ holds. 
Clearly, no triples are added to  $\opens[i+1]$, and so $|X^+_{\Sigma}(i)| = 0$. 
Suppose that $e_{i+1} \in O(i)$ is added to $G(i)$. 
Recall that a triple $(u,v,w) \in \opens$ is removed, i.e., not in $\opens[i+1]$, if it either contains $e_{i+1}$, or one of its pairs is in $C_{e_{i+1}}(i)$. 
Note that every pair is in at most $k$ triples and, furthermore, that $\cH_i$ implies \eqref{eq:closed-estimate}, which gives $|C_{e_{i+1}}(i)| \leq p^{-1}n^{\epsilon}$. 
Therefore, we deduce that 
\[ |X^-_{\Sigma}(i)| \leq k (1+|C_{e_{i+1}}(i)|) \leq 2 k p^{-1}n^{\epsilon} = o(k^2n^{-7\epsilon})  , \] 
which establishes the boundedness hypothesis for the open triples.

\subsection{Intermediate triples}
\label{sec:proof:verification:intermediate}
Every intermediate triple $(u,v,w) \in \interms$ satisfies $\{uv,vw\} \subseteq O(i)$ and $uw \in E(i)$. 
We define 
\begin{gather}
\label{def:interm_triples:y}
y(t) := 2 t {q(t)}^2 = 2t e^{-32 t^{5}} , \qquad y^{+}(t) := 2 {q(t)}^2 , \qquad y^{-}(t) := 320 t^5 {q(t)}^2 \\
\label{def:interm_triples:fy_hy}
f_y(t) := f(t) {q(t)} = e^{(W-16) t^{5} + W t} \qquad \text{ and } \qquad h_y(t) := f'_y(t)/2  .
\end{gather} 
For every $\sigma = (\Sigma,Y)$ with $\Sigma \in \cC$ we set $x_{\sigma}(t) := y(t)$, $y_{\sigma}^{\pm}(t) := y^{\pm}(t)$, $f_{\sigma}(t) := f_y(t)$, $h_{\sigma}(t) := h_y(t)$ and $S_{\sigma} := k^3p$. 
Similar as for open triples, we define $Y^+_{\Sigma}(i) := \interms[i+1] \setminus \interms$ and $Y^-_{\Sigma}(i) := \interms \setminus \interms[i+1]$. Then we set $Y^{\pm}_{(\Sigma,Y)}(i) := |Y^{\pm}_{\Sigma}(i)|$, but henceforth work directly with $|Y^{\pm}_{\Sigma}(i)|$.

\subsubsection{Trend hypothesis} 
\label{sec:proof:verification:intermediate:trend}
Whenever $\cE_i \cap \neg\badE \cap \cH_i$ holds we have to prove 
\begin{equation}
\label{eq:interm_triples:th_goal}
\EE[|Y^{\pm_1}_{\Sigma}(i)| \mid \cF_i] = \left(y^{\pm_1}(t) \pm \frac{h_y(t)}{s_{o}} \right) \frac{k^3p}{n^2p}  .
\end{equation}

\textbf{Triples added in one step.} 
Note that a triple $(u,v,w) \in \opens$ is added to $\interms[i+1]$, i.e., is in $\interms[i+1]$, if and only if $e_{i+1}=uw$ (because $e_{i+1}=uw$ can not close any of the open pairs $uv$ or $vw$). 
Recall that $\cE_i \cap \neg\badE$ implies $\cG_{i}(\Sigma)$, and thus $|\opens|$ satisfies \eqref{eq:lem:dem:trajectories:opens}. 
Furthermore, whenever $\cH_i$ holds so does $\cG_i$, which implies \eqref{eq:open-estimate}. 
Using that $e_{i+1}$ is chosen uniformly at random from $O(i)$ as well as $f(t) = o(s_e)$ and Lemma~\ref{lem:pm_inequalities}, whenever $\cE_i \cap \neg\badE \cap \cH_i$ holds we deduce that
\begin{equation}
\label{eq:interm_triples:th+}
\begin{split}
\EE[|Y^+_{\Sigma}(i)| \mid \cF_i ] &= \sum_{(u,v,w) \in \opens} \frac{1}{|O(i)|}  \subseteq \frac{({q(t)}^3	 \pm f_x(t)/s_{o}) k^3}{(1 \pm 3f(t)/s_e)q(t)n^2/2}\\
& \subseteq (1 \pm 6f(t)/s_e) \cdot (2{q(t)}^2	 \pm 2f_y(t)/s_{o}) \cdot k^3p/(n^2p)  .
\end{split}
\end{equation} 
Now, using Lemma~\ref{lem:pm_product_inequality_extended} we see that \eqref{eq:interm_triples:th_goal} for $Y^{+}_{\Sigma}(i)$ follows if 
\begin{equation}
\label{eq:interm_triples:th+_sufficient_goal}
4 f_y(t) + 24 f(t) {q(t)}^2 s_{o} / s_e \leq h_y(t)  .
\end{equation}
Using \eqref{eq:K4-constants:Wepsmu} and \eqref{def:interm_triples:fy_hy} the last inequality follows readily by observing that the term involving $s_e$ is $o(1)$.

\textbf{Triples removed in one step.} 
Observe that a triple $(u,v,w) \in \interms$ is removed, i.e., not in $\interms[i+1]$, if either $e_{i+1} \in \{uv,vw\}$, or one of the pairs $uv$ or $vw$ is in $C_{e_{i+1}}(i)$, which is equivalent to $e_{i+1} \in C_{uv}(i) \cup C_{vw}(i)$. 
Using that $e_{i+1}$ is chosen uniformly at random from $O(i)$, we see that 
\begin{equation}
\label{eq:interm_triples:th-}
\EE[|Y^-_{\Sigma}(i)| \mid \cF_i ] = \sum_{(u,v,w) \in \interms} \frac{|C_{uv}(i) \cup C_{vw}(i)| \pm 2}{|O(i)|}   .
\end{equation}
Recall that $\cH_i$ implies \eqref{eq:open-estimate}, \eqref{eq:closed-estimate} and \eqref{eq:closed-intersection-estimate}. 
Furthermore, as argued before, $f(t)/s_e = o(1)$ holds and $\cE_i \cap \neg\badE \cap \cH_i$ implies that $|\interms|$ satisfies \eqref{eq:lem:dem:trajectories:interms}. 
Substituting the former estimates into \eqref{eq:interm_triples:th-}, and using $n^{1/6} = \omega(s_e)$, $f(t) \geq 1$ as well as Lemma~\ref{lem:pm_inequalities}, 
whenever $\cE_i \cap \neg\badE \cap \cH_i$ holds we have
\begin{equation*}
\label{eq:interm_triples:th-1}
\begin{split}
\EE[|Y^-_{\Sigma}(i)| \mid \cF_i ] & = \frac{(2t {q(t)}^2 \pm f_y(t)/s_{o}) k^3p \cdot [2(40 t^{4}q(t) \pm 9f(t)/s_e)p^{-1} \pm n^{-1/6} p^{-1} \pm 2]}{(1 \pm 3f(t)/s_e)q(t)n^2/2}\\
& \subseteq  (1 \pm 6f(t)/s_e) \cdot ( 4 t q(t)	 \pm 2f(t)/s_{o} ) \cdot (8 0 t^{4}q(t) \pm 30f(t)/s_e ) \cdot k^3p/(n^2p)  . 
\end{split} 
\end{equation*}
We intend to show \eqref{eq:interm_triples:th_goal} for $Y^{-}_{\Sigma}(i)$ using Lemma~\ref{lem:pm_product_inequality_extended}. 
Similar as for the removed open triples, by writing down the assumptions of \eqref{eq:pm_product_ext2}, multiplying with $2 s_{o}$ and then noticing that all terms containing $s_{e}$ are $o(1)$, we see that it suffices if
\begin{equation*}
\label{eq:interm_triples:th_-sufficient}
1 + 320 t^4 f_y(t) \leq h_y(t)  ,
\end{equation*}
which clearly holds  by \eqref{eq:K4-constants:Wepsmu} and \eqref{def:interm_triples:fy_hy}. 
This establishes the trend hypothesis~\eqref{eq:interm_triples:th_goal}.

\subsubsection{Boundedness hypothesis} 
\label{sec:proof:verification:intermediate:bounded}
Second, we verify the boundedness hypothesis~\eqref{eq:dem:triples:max_bound} whenever $\cE_i \cap \neg\badE \cap \cH_i$ holds. 
Note that a triple $(u,v,w) \in \opens$ is added to $\interms[i+1]$, i.e., is in $\interms[i+1]$, if $e_{i+1}=uw$. As every pair is in at most $k$ triples, we thus obtain
\[ |Y^+_{\Sigma}(i)| \leq k = o(k^2pn^{-7\epsilon})  . \] 
Observe that a triple $(u,v,w) \in \interms$ is only removed, i.e., not in $\interms[i+1]$, if $e_{i+1} \in \{uv,vw\}$ or $\{uv,vw\} \cap C_{e_{i+1}}(i) \neq \emptyset$ holds. 
Since $\cH_i$ implies \eqref{eq:closed-estimate}, we have $|C_{e_{i+1}}(i)| \leq p^{-1}n^{\epsilon}$. 
Furthermore, whenever $\neg\badE$ holds so does $\neg\badE[1,i]$, and thus every vertex in $K$ has at most $kpn^{5 \epsilon}$ neighbours in $K$. 
Therefore, using that $u$ and $w$ are neighbours for every $(u,v,w) \in \interms$, we deduce that 
\[ |Y^-_{\Sigma}(i)| \leq (1+|C_{e_{i+1}}(i)|) kpn^{5 \epsilon} \leq k n^{10 \epsilon} = o(k^2pn^{-7\epsilon})  , \] 
which establishes the boundedness hypothesis for the intermediate triples.

\subsection{Partial triples} 
\label{sec:proof:verification:partial}
Every partial triple $(u,v,w) \in \parts$ satisfies $uv \in O(i) \cup C(i)$ and $\{uw,vw\} \subseteq E(i)$. 
Recall that $\parts$ is defined inductively in a manner that ensures that every $uv$ with $u \in A$ and $v \in B$ is contained in at most one of the triples in $\parts$. 
We define 
\begin{gather}
\label{def:part_triples:z}
z(t) := 2 t^2 q(t) = 2 t^2 e^{-16 t^{5}}  , \qquad z^{+}(t) := 4 t q(t)  , \qquad z^{-}(t) := 160 t^6 q(t) \\
\label{def:part_triples:fz_hz}
f_z(t) := f(t) = e^{(t^{5}+t)W} \qquad \text{ and } \qquad h_z(t) := f'_z(t)/2 = f'(t)/2  .
\end{gather} 
For every $\sigma = (\Sigma,Z)$ with $\Sigma \in \cC$ we set $x_{\sigma}(t) := z(t)$, $y_{\sigma}^{\pm}(t) := z^{\pm}(t)$, $f_{\sigma}(t) := f_z(t)$, $h_{\sigma}(t) := h_z(t)$ and $S_{\sigma} := k^3p^2$. 
Similar as for open and intermediate triples, we define $Z^+_{\Sigma}(i) := \parts[i+1] \setminus \parts$ and $Z^-_{\Sigma}(i) := \parts \setminus \parts[i+1]$. 
Then we set $Y^{\pm}_{(\Sigma,Z)}(i) := |Z^{\pm}_{\Sigma}(i)|$, but work directly with $|Z^{\pm}_{\Sigma}(i)|$.

\subsubsection{Trend hypothesis} 
\label{sec:proof:verification:partial:trend}
As usual, whenever $\cE_i \cap \neg\badE \cap \cH_i$ holds we have to prove 
\begin{equation}
\label{eq:part_triples:th_goal}
\EE[|Z^{\pm_1}_{\Sigma}(i)| \mid \cF_i ] = \left(z^{\pm_1}(t) \pm \frac{h_z(t)}{s_{o}} \right) \frac{k^3p^2}{n^2p}  .
\end{equation}

\textbf{Triples added in one step.} 
Recall that a triple $(u,v,w) \in \interms$ is added to $\parts[i+1]$, i.e., is in $\parts[i+1]$, if $e_{i+1}=vw$, $uv \notin C_{vw}(i)$ and there exists no $w' \in C$ such that $(u,v,w') \in \parts$. 
First we determine all $(u,v,w) \in \interms$ that satisfy $uv \in C_{vw}(i)$. 
Let $C_{\Sigma}(i)$ denote all such triples. 
Recall that $\Xi_{\Sigma}(i)$ contains all quadruples $(u,v,w,z) \in A \times B \times C \times [n]$ which are as in Figure~\ref{fig:quadruple:Xi}, i.e., with $z \notin \{u,v,w\}$ and $\{uw,zu,zv,zw\} \subseteq E(i)$. 
Observe that for every $(u,v,w) \in C_{\Sigma}(i)$ there exists $z \in [n]$ such that $(u,v,w,z) \in \Xi_{\Sigma}(i)$, see also Figure~\ref{fig:triples} on page~\pageref{fig:triples}. 
Therefore 
\begin{equation}
\label{eq:part_triples:Ci_Xi}
|C_{\Sigma}(i)| \leq |\Xi_{\Sigma}(i)|  . 
\end{equation}
Note that whenever $\cH_i$ holds so does $\cQ_i$ (defined in Lemma~\ref{lem:deletion_lemma_subgraph_count_K3edge}); hence for $r=k$ there exists a set $E_0 \subseteq [n] \times K$ of pairs with $|E_0| \leq 20 \epsilon^{-1}k$ such that there are at most $k^{3}np^{4} n^{10 \epsilon}$ quadruples $(u,v,w,z) \in \Xi_{\Sigma}(i)$ with $\{uw, zu, zv, zw\} \subseteq E(i)\setminus E_0$. 
Furthermore, as $\neg\badE$ holds, by $\neg\badE[3,i]$ we know that every $xy \in E_0$ satisfies $xy \in \{uw, zu, zv, zw\}$ for at most $k^2pn^{-15\epsilon}$ quadruples $(u,v,w,z) \in \Xi_{\Sigma}(i)$. So, together with \eqref{eq:part_triples:Ci_Xi} we deduce that 
\begin{equation}
\label{eq:part_triples:Ci} 
|C_{\Sigma}(i)| \leq  |\Xi_{\Sigma}(i)| \leq k^{3}np^{4} n^{10 \epsilon} + 20 \epsilon^{-1}k^3pn^{-15\epsilon} \leq k^3pn^{-10\epsilon}  .
\end{equation} 
Next, let $D_{\Sigma}(i)$ contain all triples $(u,v,w) \in \interms$ for which there exists $w' \in C$ such that $(u,v,w') \in \parts$. 
Whenever $\neg\badE$ holds so does $\neg\badE[1,i]$, and thus every vertex in $K$ has at most $kpn^{5 \epsilon}$ neighbours in $K$.
Since $w$ must be a neighbour of $u$ for every $(u,v,w) \in \interms$, we deduce that $|D_{\Sigma}(i)| \leq |\parts| \cdot kpn^{5 \epsilon}$. 
As $\cE_i \cap \neg\badE$ implies $\cG_{i}(\Sigma)$, we know that $|\parts|$ satisfies \eqref{eq:lem:dem:trajectories:parts}, which gives $|\parts| \leq k^3p^2 n^{\epsilon}$. 
So, we see that 
\begin{equation}
\label{eq:part_triples:Di} 
|D_{\Sigma}(i)| \leq k^4p^3n^{6 \epsilon} = o(k^3pn^{-10\epsilon})  .   
\end{equation} 
To summarize, $(u,v,w) \in \interms$ is added to $\parts[i+1]$ if and only if $e_{i+1}=vw$ and $(u,v,w) \notin  C_{\Sigma}(i) \cup D_{\Sigma}(i)$. 
As noted before, $\cE_i \cap \neg\badE \cap \cH_i$ implies that $O(i)$ and $\interms$ satisfy \eqref{eq:open-estimate} and \eqref{eq:lem:dem:trajectories:interms}, respectively. 
So, using that $e_{i+1}$ is chosen uniformly at random from $O(i)$ as well as \eqref{eq:part_triples:Ci}, \eqref{eq:part_triples:Di}, $n^{10\epsilon} = \omega(s_{o})$, $f_y(t) \geq 1$, $f(t) = o(s_e)$ and Lemma~\ref{lem:pm_inequalities}, whenever $\cE_i \cap \neg\badE \cap \cH_i$ holds we have 
\begin{equation*}
\label{eq:part_triples:th+_0}
\begin{split}
\EE[|Z^+_{\Sigma}(i)| \mid \cF_i ] &= \sum_{(u,v,w) \in \interms \setminus [C_{\Sigma}(i) \cup D_{\Sigma}(i)]} \frac{1}{|O(i)|} \subseteq \frac{(2t{q(t)}^2 \pm f_y(t)/s_{o}) k^3p \pm 2k^3pn^{-10\epsilon}}{(1 \pm 3f(t)/s_e)q(t)n^2/2}\\
& \subseteq (1 \pm 6f(t)/s_e) \cdot (4t{q(t)} \pm 4f(t)/s_{o}) \cdot k^3p^2/(n^2p)  .
\end{split}
\end{equation*} 
Now, using Lemma~\ref{lem:pm_product_inequality_extended} we see that \eqref{eq:part_triples:th_goal} for $Z^{+}_{\Sigma}(i)$ follows if 
\begin{equation*}
\label{eq:part_triples:th+_sufficient_goal}
8 f(t) + 48 t  f(t) q(t) s_{o} / s_e \leq h_z(t)  .
\end{equation*}
Using \eqref{eq:K4-constants:Wepsmu} and \eqref{def:part_triples:fz_hz} the last inequality follows readily by observing that the term involving $s_e$ is $o(1)$.

\textbf{Triples removed in one step.} 
Recall that a triple $(u,v,w) \in \parts$ is not always removed if $e_{i+1}=uv$ or $e_{i+1} \in C_{uv}(i)$ holds (since it can be ignored). 
For estimating the expected number of removed triples we now derive sufficient and necessary conditions for a triple to be removed and we start with a sufficient condition. 
To this end we first determine the triples which might be ignored because of (I2).  
Let $P_{\Sigma}(i)$ contain all open pairs $xy \in O(i)$ with $\min\{|\Gamma(x) \cap \Gamma(y) \cap A|,|\Gamma(x) \cap \Gamma(y) \cap B|\} \geq kpn^{-20\epsilon}$ in $G(i)$. 
As $\neg\badE$ holds, from $\neg\badE[2,i]$ we deduce that $|P_{\Sigma}(i)| \leq k n^{-20 \epsilon}$. 
Now let $I_{\Sigma}(i)$ denote all triples $(u,v,w) \in \parts$ such that in $G(i)$ we have $\{u,v\} \subseteq \Gamma(x) \cap \Gamma(y)$ for some $xy \in P_{\Sigma}(i)$. 
Recall that every $uv$ with $u \in A$ and $v \in B$ is contained in at most one triple in $\parts$.
Furthermore, $\cH_i$ implies \eqref{eq:codegree-estimate}, and so  $|\Gamma(x) \cap \Gamma(y)| \leq (\log n) n p^2$ for every $xy \in P_{\Sigma}(i)$. 
Thus, 
using $|P_{\Sigma}(i)| \leq k n^{-20 \epsilon}$, we obtain 
\begin{equation}
\label{eq:part_triples:Ii}
|I_{\Sigma}(i)| \leq |P_{\Sigma}(i)| \cdot [(\log n) n p^2]^2 \leq k^3 p^{2} n^{-15 \epsilon}  .
\end{equation}
Note that (R2) can only fail if $(u,v,w) \in I_{\Sigma}(i)$. So, if $(u,v,w) \in \parts$ satisfies $e_{i+1} \in C_{uv}(i)$ and $e_{i+1} \cap uv = \emptyset$, then $(u,v,w) \notin I_{\Sigma}(i)$ is a sufficient condition for (R2) to hold.

Next we derive a sufficient condition for (R3a). 
To this end let $L_{\Sigma}(i)$ contain all vertices $y \in [n]$ which satisfy $|\Gamma(y) \cap K| \geq p^{-1}n^{-15\epsilon}$ in $G(i)$. 
Recall that whenever $\cH_i$ holds so does $\cN_i$ (defined in Lemma~\ref{lem:large_degree_bounded}). 
One can check that $\cN_i$ implies, say, $|L_{\Sigma}(i)| \leq kp n^{20 \epsilon}$. 
For every triple $(u,v,w) \in \parts$ we then set $L_{uv,\Sigma}(i) := \{u,v\} \times L_{\Sigma}(i)$, and thus, using $s_{e} = n^{1/12-\epsilon}$, we see that
\begin{equation}
\label{eq:part_triples:Luvi}
|L_{uv,\Sigma}(i)| = 2 |L_{\Sigma}(i)| \leq kp n^{25 \epsilon} = o\left((s_{e} p)^{-1}\right) .
\end{equation}
Note that (R3a) can only fail if $e_{i+1} \in L_{uv,\Sigma}(i)$. So,  if $(u,v,w) \in \parts$ satisfies $e_{i+1} \in C_{uv}(i)$ and $e_{i+1} \cap uv \neq \emptyset$, then $e_{i+1} \notin L_{uv,\Sigma}(i)$ is a sufficient condition for (R3a) to hold.

To summarize, if $(u,v,w) \in \parts \setminus I_{\Sigma}(i)$ satisfies $e_{i+1} \in C_{uv}(i) \setminus L_{uv,\Sigma}(i)$, then either (R2) or (R3a) holds and hence $(u,v,w)$ is removed. 
Clearly, a necessary condition for $(u,v,w) \in \parts$ being removed is $e_{i+1} \in C_{uv}(i) \cup \{uv\}$. 
So, since $e_{i+1}$ is chosen uniformly at random from $O(i)$, we have 
\begin{equation}
\label{eq:part_triples:th-_0}
\EE[|Z^-_{\Sigma}(i)| \mid \cF_i ] = \sum_{(u,v,w) \in \parts} \frac{|C_{uv}(i)| \pm | L_{uv,\Sigma}(i)| \pm 1}{|O(i)|} \pm \sum_{(u,v,w) \in I_{\Sigma}(i)} \frac{|C_{uv}(i)|}{|O(i)|}  .
\end{equation} 
As argued before, $|\parts|$ satisfies \eqref{eq:lem:dem:trajectories:parts} whenever $\cE_i \cap \neg\badE$ holds. Furthermore, $\cH_i$ implies that \eqref{eq:open-estimate} and \eqref{eq:closed-estimate} hold. 
Substituting the former estimates and \eqref{eq:part_triples:Ii}, \eqref{eq:part_triples:Luvi} into \eqref{eq:part_triples:th-_0}, and using $n^{15 \epsilon} = \omega(s_{o})$, $f(t) \geq 1$ and Lemma~\ref{lem:pm_inequalities}, 
whenever $\cE_i \cap \neg\badE \cap \cH_i$ holds we deduce that 
\begin{equation*}
\label{eq:part_triples:th-_1}
\begin{split}
\EE[|Z^-_{\Sigma}(i)| \mid \cF_i ] & = \frac{[(2 t^{2}q(t) \pm f(t)/s_{o})k^3p^2 \pm  k^3 p^{2} n^{-15 \epsilon}] \cdot [(40 t^{4}q(t) \pm 9 f(t)/s_{e})p^{-1} \pm (s_{e} p)^{-1} \pm 1]}{(1 \pm 3f(t)/s_e)q(t)n^2/2}\\
& \subseteq (1 \pm 6f(t)/s_e) \cdot (2 t^{2}q(t) \pm 2f(t)/s_{o}) \cdot [80 t^{4} \pm 30 f(t)/(s_{e}q(t))] \cdot k^3 p^{2}/(n^2p)  .
\end{split}
\end{equation*}
We intend to show \eqref{eq:part_triples:th_goal} for $Z^{-}_{\Sigma}(i)$ using Lemma~\ref{lem:pm_product_inequality_extended}. 
Similar as for the removed open and intermediate triples, by writing down the assumptions of \eqref{eq:pm_product_ext2} and then noticing that all terms containing $s_{e}$ are negligible, we see that it suffices if
\begin{equation*}
\label{eq:part_triples:th--sufficient}
1 + 320 t^4 f(t) \leq h_z(t)  ,
\end{equation*}
which clearly holds  by \eqref{eq:K4-constants:Wepsmu} and \eqref{def:part_triples:fz_hz}. 
This establishes the trend hypothesis~\eqref{eq:part_triples:th_goal}.

\subsubsection{Boundedness hypothesis} 
\label{sec:proof:verification:partial:bounded}
In this section we verify the boundedness hypothesis~\eqref{eq:dem:triples:max_bound} whenever $\cE_i \cap \neg\badE \cap \cH_i$ holds. 
Note that for a triple $(u,v,w) \in \interms$ to be added to $\parts[i+1]$ the conditions $e_{i+1}=vw$ and $u \in \Gamma(w)$ are necessary. 
Furthermore, whenever $\neg\badE$ holds so does $\neg\badE[1,i]$, and thus every vertex in $K$ has at most $kpn^{5 \epsilon}$ neighbours in $K$. 
Therefore 
\[ |Z^+_{\Sigma}(i)| \leq kpn^{5 \epsilon} = o(k^2p^2n^{-7\epsilon})  . \]

Recall that a triple $(u,v,w) \in \parts$ is removed, i.e., not in $\parts[i+1]$, according to different rules. 
In the following we bound the total number of triples removed in one step by each rule. 
As the inductive definition of $\parts$ ensures that every $uv$ with $u \in A$ and $v \in B$ is contained in at most one of the triples in $\parts$, it clearly suffices to bound the number of corresponding pairs $uv$ with $u \in A$ and $v \in B$ that are removed by $e_{i+1}=xy$.  
With $e_{i+1}=xy$ given, we need to consider pairs $uv$ in three different relations to $xy$; these were called cases~$1$--$3$ in Section~\ref{sec:main-proof:configs_variables}. 
In case~$1$ we have $uv=e_{i+1}$, and so, given $e_{i+1}$, at most one triple is removed under case~$1$.

For $e_{i+1} =xy$ the rule (R2) only removes triples $(u,v,w) \in \parts$ with $u \in \Gamma(x) \cap \Gamma(y) \cap A$ and $v \in \Gamma(x) \cap \Gamma(y) \cap B$. 
Because $\cH_i$ holds, all codegrees are at most $(\log n)np^2$ by \eqref{eq:codegree-estimate}.  
Applying the bound in (R2) to bound the number of possibilities for $u$ or $v$ as appropriate, and then using the codegree to bound the number of choices for the other, this rule removes at most  $kpn^{-20 \epsilon} \cdot (\log n)np^2 \leq knp^3n^{-15 \epsilon}$ triples.

Every triple removed under case~$3$ satisfies $uv \cap e_{i+1} = x$ and $\{u,v\} \setminus \{x\} \subseteq \Gamma(y)$, where $e_{i+1} =xy$. 
Hence (R3a) removes at most $2p^{-1}n^{-15\epsilon}$ triples (the factor of two accounts for the different choices of $x$). 
The last rule (R3b) only removes triples $(u,v,w) \in \parts$ for which there exists $z \in \Gamma(x) \cap \Gamma(y)$ such that $\{u,v\} \setminus \{x\} \subseteq \Gamma(y) \cap \Gamma(z) \cap K$ and $|\Gamma(y) \cap \Gamma(z) \cap K| \leq kpn^{-20 \epsilon}$. 
So, using the codegree to bound the number of choices for $z$ and, given $z$, the above bound for the number of vertices in $\Gamma(y) \cap \Gamma(z) \cap K$, this rule removes at most $2 (\log n)np^2 \cdot kpn^{-20 \epsilon} \leq knp^3n^{-15 \epsilon}$ triples.

Putting it all together, we deduce that 
\[ |Z^-_{\Sigma}(i)| \leq 1 + 2p^{-1}n^{-15\epsilon} + 2knp^3n^{-15 \epsilon} \leq k^2 p^2n^{-10 \epsilon}  , \]
which clearly establishes the boundedness hypothesis for the partial triples.

\subsection{Finishing the trajectory verification} 
\label{sec:proof:verification:end}
In this section we verify the remaining conditions of the differential equation method (Lemma~\ref{lem:dem}).

\textbf{Initial conditions.} 
Using \eqref{def:open_triples:x}, \eqref{def:interm_triples:y} and \eqref{def:part_triples:z}, for all $\Sigma \in \cC$ it is easy to see that $\opens[0] = k^3 = x(0) k^3$, $\interms[0] = 0 = y(0) k^3p$ and $\parts[0] = 0 = z(0) k^3p^2$ hold, which establishes \eqref{eq:lem:dem:initial_condition}.

\textbf{Bounded number of configurations and variables.} 
By construction we have 
\begin{equation*}
\label{eq:bound:nr_configs}
|\cC| \leq \binom{n}{u} 4^u \leq n^{2 u} = e^{2u \log n}  ,
\end{equation*}
which together with $|\cV|=3$ and $u_{\sigma}=un^{\epsilon}$ clearly establishes \eqref{eq:lem:dem:bounded_parameters}.

\textbf{Additional technical assumptions and the function $f_{\sigma}(t)$.}  
Recall that $m=\mu n^2p\sqrt[5]{\log n}$, $s=n^2p$ and $u=\gamma \mu np\sqrt[5]{\log n}$. 
Now, using \eqref{def:dem:parameters} it is easy to see that \eqref{eq:lem:dem:technical_assumptions:sm} holds, with room to spare. 
Next, using \eqref{eq:K4-constants:Wepsmu}, \eqref{def:open_triples:x}, \eqref{def:interm_triples:y} and \eqref{def:part_triples:z}, elementary calculus shows that for all $\sigma \in \cC \times \cV$ we have 
\[
x'_{\sigma}(t) = y^+_{\sigma}(t) - y^-_{\sigma}(t) , \qquad \sup_{0 \leq t \leq m/s} y_{\sigma}^{\pm }(t) \leq n^\epsilon = \lambda_{\sigma}  \qquad \text{and} \qquad \int_0^{m/s} |x_{\sigma}''(t)| \ dt \leq n^\epsilon = \lambda_{\sigma}   ,
\]
with plenty of room to spare for large $n$. 
Recall that for all $\sigma \in \cC \times \cV$ we have $h_{\sigma}(t) = f'_{\sigma}(t)/2$ and $f_{\sigma}(t) = f(t) {q(t)}^{\iota}$, where $\iota \in \{0,1,2\}$. Hence, using $f_{\sigma}(0)=1=\beta_{\sigma}$, we see that 
\[
f_{\sigma}(t) = 2 \int_{0}^{t} h_{\sigma}(\tau) \ d\tau + f_{\sigma}(0) = 2 \int_{0}^{t} h_{\sigma}(\tau) \ d\tau + \beta_{\sigma}  .
\] 
Note that $h_{\sigma}(0) = W/2 \leq s_{\sigma} \lambda_{\sigma}$ and $h'_{\sigma}(t) \geq 0$. In addition, observe that $h'_{\sigma}(t)$ is bounded by some constant for, say, $t \leq 30$. 
For larger $t$, we have $t^8 \leq e^t$, which implies, say, $h'_{\sigma}(t) \leq W^3{f(t)}^2$. Putting things together, using elementary calculus as well as \eqref{eq:K4-parameters} and \eqref{eq:K4-functions-estimates}, for $n$ large enough we obtain 
\[
\int_{0}^{m/s} |h'_{\sigma}(t)| \ dt \leq \int_{0}^{30} h'_{\sigma}(t) \ dt + \int_{30}^{m/s} W^3{f(t)}^2 \ dt \leq O(1) + m/s \cdot W^3 {f(m/s)}^{2} \leq n^{3\epsilon} = s_{\sigma} \lambda_{\sigma} . 
\]
To summarize, we showed that \eqref{eq:lem:dem:derivative} as well as the additional technical assumptions \eqref{eq:lem:dem:technical_assumptions:sm}--\eqref{eq:lem:dem:technical_assumptions:h} hold, and this completes the proof of Lemma~\ref{lem:dem:trajectories}. \qed

\section{`Very good' configurations exist for every subset} 
\label{sec:good_configurations_exist}
In this section we prove Lemma~\ref{lem:dem:config}, which is our main combinatorial statement. 
Since this lemma is purely deterministic, it suffices to prove its claim for fixed $U \in \binom{[n]}{u}$ and $G(i)$ satisfying $\cH_i$. 
We proceed in two steps, always tacitly assuming that $n$ is sufficiently large whenever necessary. 
First, in Section~\ref{sec:good_configurations_exist:find_config} we pick a `nice' configuration $\Sigma^*=(U,\Pi)$.
Afterwards, in Sections~\ref{sec:good_configuration}--\ref{sec:good_configuration:few_ignored} we verify the claimed properties using the density arguments of Section~\ref{sec:density}. 
Perhaps surprisingly, for showing that $\partCs \setminus \partOCs$ is small we do \emph{not} need to know all $G(i')$ with $i' \leq i$; our proof only uses the trivial inclusion $G(i') \subseteq G(i)$.

In the remainder of this section $\Gamma(v)$ denotes the neighbourhood of $v$ in $G(i)$, unless otherwise stated.  
Furthermore, will use without further reference that $G(i)$ satisfies 
\[
\cH_i  \subseteq \cD_i \cap \cG_i \cap \cM_i \cap \cN_i ,
\]
where $\cG_i$ is defined in Theorem~\ref{thm:BohmanKeevash2010H}, and $\cD_i$, $\cM_i$ and $\cN_i$ are defined in Lemmas~\ref{lem:edges_bounded}--\ref{lem:codegree_into_set_bounded}. 
In particular, as $\cG_i$ implies \eqref{eq:codegree-estimate}, in $G(i)$ all codegrees are bounded by $(\log n) n p^2$. 
For the subsequent calculations it may be useful to keep in mind that $p=n^{-2/5}$, $k=n^{3/5 \pm \epsilon}$ and $u=\Theta(k)$.

\subsection{Finding $\Sigma^*=(U,\Pi)$ in $G(i)$} 
\label{sec:good_configurations_exist:find_config}
In this section we pick $\Sigma^*=(U,\Pi)$ by only considering the edges of $G(i)$; 
in anticipation of our later arguments we also construct an associated set of vertices $I$ with additional properties. 
Set 
\begin{equation}
\label{def:LU}
L := \{ v \in [n] \;:\; |\Gamma(v) \cap U| \geq kp n^{5 \epsilon}\}  ,
\end{equation}
so $L$ contains those vertices which have `too many' neighbours in $U$. 
Observe that $\cN_i$ implies, say, $|L| \leq p^{-1} = o(k)$. 
Henceforth we assume that the vertices $v_1, \ldots, v_n \in [n]$ are arranged in decreasing order 
wrt.\  their number of neighbours in $U$, i.e., we have 
\begin{equation}
\label{eq:vertex_ordering}
|\Gamma(v_1) \cap U| \geq |\Gamma(v_2) \cap U| \geq \cdots \geq  |\Gamma(v_j) \cap U| \geq \cdots \geq  |\Gamma(v_n) \cap U|  .
\end{equation}
We greedily choose first $\ell_A$, then $\ell_B$ and finally $\ell_C$ such that they are the smallest indices for which 
\begin{gather*}
N_A := \bigcup_{1 \leq j \leq \ell_A}  \big(\Gamma(v_j) \cap U\big)  , \qquad N_B  := \bigcup_{\ell_A < j \leq \ell_B}  \big(\Gamma(v_j) \cap U\big) \setminus N_A \qquad \text{ and } \\
N_C  := \bigcup_{\ell_B < j \leq \ell_C}  \big(\Gamma(v_j) \cap U\big) \setminus \big(N_A \cup N_B\big)
\end{gather*} 
each have cardinality at least $2k$, where we set the corresponding index to $\infty$ if this is not possible. 
Recall that $k=u/15 = \gamma/15 \cdot np t_{\max}$ by \eqref{eq:K4-parameters3} and $\gamma \geq 150$ by \eqref{eq:K4-parameters2}.  
Furthermore, since $\cG_i$ holds, by \eqref{eq:degree-estimate} the maximum degree in $G(i)$ is at most $3 np t_{\max} \leq k/3$. 
With this in mind, we deduce that the size of $N_A$, $N_B$ and $N_C$ is each at most $2k + k/3 = 7k/3$. 
Using $|L| =o(k)$ this implies 
\begin{equation}
\label{eq:size_neighbourhoods}
|N_A \cup N_B \cup N_C \cup L| \leq 7k + o(k) \leq u/2  .
\end{equation} 
Now we pick a partition $\Pi=(A,B,C)$ as follows. 
If $\ell_C=\infty$ or $\ell_C > kp n^{-5\epsilon}$, define $I := \emptyset$ and choose arbitrary disjoint sets of size $k$ satisfying 
\[ A,B,C \subseteq U \setminus \big(N_A \cup N_B \cup N_C \cup L\big)  ,  \]
which is possible by \eqref{eq:size_neighbourhoods}. 
Otherwise $\ell_C \leq kp n^{-5 \epsilon}$ holds. 
In this case, we define $\Gamma(S) := \bigcup_{v \in S} \Gamma(v)$ for every vertex set $S$, and set $I_A := \{v_1, \ldots, v_{\ell_A}\}$, $I_{BC} := \{v_{\ell_A+1}, \ldots, v_{\ell_C}\}$ and $I := I_A \cup I_{BC}$. 
Since by \eqref{eq:codegree-estimate} all codegrees are bounded by $(\log n) n p^2$, using $\ell_A \leq \ell_C$ we see that  
\begin{equation}
\label{eq:size_neighbourhoods:IB}
|\Gamma(I_{BC}) \cap N_A| \leq |\Gamma(I_{BC}) \cap \Gamma(I_A)| \leq \ell_C \cdot \ell_A \cdot (\log n) n p^2  \leq k n^{-5\epsilon}  . 
\end{equation}
Now we choose arbitrary sets of size $k$ satisfying 
\[ 
A \subseteq N_A \setminus \big(\Gamma(I_{BC}) \cup L\big)  , \quad   B \subseteq N_B \setminus L \quad \text{ and } \quad C \subseteq N_C \setminus L  , 
\] 
which is possible by \eqref{eq:size_neighbourhoods:IB} and $|L|=o(k)$. Note that $A$, $B$ and $C$ are disjoint. 
Finally, we set $\Pi:=(A,B,C)$, $\Sigma^*:=(U,\Pi)$ and write $K=K(\Sigma^*) := A \cup B \cup C$.

We remark that the above construction borrows some ideas from Bohman~\cite{Bohman2009K3}, but differs in many details. 
An important difference to~\cite{Bohman2009K3} is that we may not assume that $K$ is an independent set. 
One of the new ingredients here is the removal of the high-degree vertices contained in $L$, which implies an upper bound on $|\Gamma(v) \cap K|$ for every vertex $v \in K$. 
Furthermore, in contrast to~\cite{Bohman2009K3} our construction also allows us to reason about $|\Gamma(v) \cap K|$ for every $v \in U \setminus K$, cf.\ Section~\ref{sec:good_configuration:neighbourhoods}. 
In the following sections we argue that $\Sigma^*$ has the properties claimed by Lemma~\ref{lem:dem:config}.

\subsection{Bounding the size of certain neighbourhoods}
\label{sec:good_configuration:neighbourhoods}
In this section we collect some bounds on the number of neighbours in $A$, $B$ or $K=A \cup B \cup C$, which will be used extensively in the sequel. 
We claim that in $G(i)$ every vertex $v \in [n] \setminus I$ satisfies 
\begin{equation}
\label{eq:vnotinI:degree_bounded}
|\Gamma(v) \cap K| \leq p^{-1} n^{10 \epsilon}  ,
\end{equation}
and, furthermore, that every vertex $v \in I$ satisfies 
\begin{equation}
\label{eq:vinI:neighbourhood}
\min\left\{|\Gamma(v) \cap A|, |\Gamma(v) \cap B|\right\} = 0  . 
\end{equation}

First, we consider the case $\ell_C \leq kp n^{-5 \epsilon}$. Every $v \in I$ clearly satisfies \eqref{eq:vinI:neighbourhood}, since by construction $\Gamma(v) \cap K \subseteq A$ or $\Gamma(v) \cap K \subseteq B \cup C$. 
Similar as in~\cite{Bohman2009K3}, using the codegree bound $(\log n) n p^2$, for every $v \notin I = \{v_1, \ldots, v_{\ell_C}\}$ we establish \eqref{eq:vnotinI:degree_bounded} as follows: 
\[
|\Gamma(v) \cap K | \leq \sum_{1 \leq j \leq \ell_C} |\Gamma(v) \cap \Gamma(v_j)| \leq \ell_C (\log n) n p^2 \leq p^{-1}  .  
\]
Otherwise $\ell_C=\infty$ or $\ell_C > kp n^{-5 \epsilon}$ holds. 
Since in this case $I = \emptyset$, we have to show that \eqref{eq:vnotinI:degree_bounded} holds for all $v \in [n]$. 
If $\ell_C=\infty$, then all vertices satisfy $|\Gamma(v) \cap K| = 0$. 
Thus we may assume that $kp n^{-5 \epsilon} < \ell_C < \infty$ holds. 
The following argument is based on an idea of Bohman~\cite{Bohman2009K3}. The important difference here is that our conclusion also holds for the vertices in $U$. 
Set $R := \{v_{\ell_C+1}, \ldots, v_{n}\}$, and note that all vertices $v \notin R$ satisfy $|\Gamma(v) \cap K| = 0$ since $\Gamma(v) \cap U \subseteq N_A \cup N_B \cup N_C$. 
Now, due to \eqref{eq:vertex_ordering} and $K \subseteq U$, to prove that \eqref{eq:vnotinI:degree_bounded} holds for all $v \in R$, it is enough to show $|\Gamma(v_{\ell}) \cap U| \leq p^{-1} n^{10 \epsilon}$ for $\ell := kp n^{-5 \epsilon}$.  
Set $H := \{v_1, \ldots, v_{\ell}\}$.  
On the one hand, using \eqref{eq:vertex_ordering} we have 
\[ 2 e(H,U) \geq \sum_{1 \leq j \leq \ell} |\Gamma(v_{j}) \cap U| \geq kp n^{-5 \epsilon} |\Gamma(v_{\ell}) \cap U|  . \] 
On the other hand, since $G(i)$ satisfies $\cD_i$, using $|H| = \ell \leq p^{-1} = o(k)$ and $|U|=15k$ we have, say, $e(H,U) \leq kn^{3\epsilon}$. 
Putting things together, we deduce that $|\Gamma(v_{\ell}) \cap U| \leq p^{-1} n^{10 \epsilon}$, with room to spare. 
As explained, this completes the proof of \eqref{eq:vnotinI:degree_bounded} and \eqref{eq:vinI:neighbourhood}.

\subsection{The configuration $\Sigma^*$ is good}  
\label{sec:good_configuration} 
In this section we show that $\neg\badEC \;=\; \neg\badEC[1,i] \cap \neg\badEC[2,i] \cap \neg\badEC[3,i]$ holds, which by monotonicity (see Section~\ref{sec:main-proof:bad_high_probability_events}) implies $\neg\badEC[i-1]$.  
Observe that the maximum degree inside $K$ is at most $kpn^{5\epsilon}$ since $K \cap L = \emptyset$, which establishes $\neg\badEC[1,i]$.

Turning to $\neg\badEC[2,i]$, recall that we need to show that the number of pairs $xy \in \binom{[n]}{2}$ with 
\begin{equation}
\label{eq:badEventB2i:pair_constraint}
\min\{|\Gamma(x) \cap \Gamma(y) \cap A|,|\Gamma(x) \cap \Gamma(y) \cap B|\} \geq kpn^{-20\epsilon} 
\end{equation}
 is less than $kn^{-20\epsilon}$. 
By \eqref{eq:vinI:neighbourhood} we may restrict our attention to pairs which contain no vertices from $I$. 
Now, let $H$ contain all vertices $x \in [n] \setminus I$ with $|\Gamma(x) \cap K| \geq kpn^{65 \epsilon}$. 
Using $\cN_i$ we obtain, say, $|H| \leq p^{-1}n^{-60\epsilon}$. 
In the following we use a case distinction to count all pairs that satisfy \eqref{eq:badEventB2i:pair_constraint}. 
To this end we first define $P_H$ as the set of all pairs $xy \in \binom{[n]}{2}$ which satisfy \eqref{eq:badEventB2i:pair_constraint} and contain at least one vertex from $H$. 
Fix $x \in H$. 
By \eqref{eq:badEventB2i:pair_constraint} we know that every $y$ with $xy \in P_H$ satisfies 
\begin{equation}
\label{eq:badEventB2i:pair_constraint:K}
|\Gamma(y) \cap (\Gamma(x) \cap K)| \geq kpn^{-20\epsilon} =: d  .
\end{equation}
Since $x \notin I$ we have $|\Gamma(x) \cap K| \leq p^{-1} n^{10 \epsilon}$ by \eqref{eq:vnotinI:degree_bounded}. Thus, using $\cN_i$ we see that the number of such $y$ is bounded by $16|\Gamma(x) \cap K| / (\epsilon d) \leq kpn^{35\epsilon}$. 
So, considering the number of choices for $x \in H$, and then the number of $y$ with $xy \in P_H$ for each such vertex $x$, we deduce that 
\begin{equation}
\label{eq:badEventB2i:pair_large_degree}
|P_H| \leq p^{-1}n^{-60\epsilon} \cdot kpn^{35\epsilon} = kn^{-25\epsilon}  .
\end{equation} 
Second, we define $P_L$ as the set of all pairs $xy \in \binom{[n]}{2}$ which satisfy \eqref{eq:badEventB2i:pair_constraint} and do not contain any vertex from $H \cup I$. 
To bound the size of $P_L$ we define an auxiliary graph $\cG(P_L)$ with vertex set $P_L$.
Two distinct vertices $xy, x'y' \in V(\cG(P_L))$ are joined by an edge if $xy \cap x'y' \neq \emptyset$, i.e., if the corresponding pairs share a vertex. 
As $\cG(P_L)$ has $|P_L|$ vertices, we estimate the size of $P_L$ with the next lemma, whose very simple bound is e.g.\ attained by the complete graph and its complement. 
\begin{lemma}%
\label{lem:graph_nr_vertices}%
Suppose $G$ is a simple graph. Let $\alpha(G)$ denote the size of the largest independent set in $G$ and let $\Delta(G)$ denote the maximum degree of $G$. Then $G$ has at most $\alpha(G) [ 1+\Delta(G) ]$ vertices. 
\end{lemma}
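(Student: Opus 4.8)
The plan is to use the standard greedy argument via a maximal independent set. First I would let $S \subseteq V(G)$ be an \emph{inclusion-maximal} independent set of $G$, obtained for instance by repeatedly adding to $S$ any vertex that has no neighbour already in $S$ until no such vertex remains. By the very definition of $\alpha(G)$ as the size of the \emph{largest} independent set, we have $|S| \le \alpha(G)$ (here it does not matter that $S$ need not be a maximum independent set, only that it is independent).

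Next I would argue that $V(G) = S \cup \Gamma(S)$, where $\Gamma(S) := \bigcup_{v \in S}\Gamma(v)$ denotes the set of all neighbours of vertices in $S$. Indeed, if some vertex $w \in V(G)$ were neither in $S$ nor adjacent to any vertex of $S$, then $S \cup \{w\}$ would still be independent, contradicting the maximality of $S$. Hence every vertex outside $S$ lies in $\Gamma(S)$.

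Finally I would bound the two pieces. Trivially $|S| \le \alpha(G)$, and since each vertex has at most $\Delta(G)$ neighbours, $|\Gamma(S)| \le \sum_{v \in S} |\Gamma(v)| \le |S|\,\Delta(G) \le \alpha(G)\,\Delta(G)$. Combining these,
\[
|V(G)| \le |S| + |\Gamma(S)| \le \alpha(G) + \alpha(G)\,\Delta(G) = \alpha(G)\bigl[1 + \Delta(G)\bigr],
\]
as claimed.

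There is essentially no serious obstacle here; the argument is a one-line greedy/counting bound. The only point that warrants a moment's care is the distinction between a maximal and a maximum independent set: we must start from a maximal one so that the covering property $V(G) = S \cup \Gamma(S)$ holds, while still getting the bound $|S| \le \alpha(G)$ for free since every independent set has size at most $\alpha(G)$.
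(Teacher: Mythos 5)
Your argument is correct and complete: a maximal (not necessarily maximum) independent set $S$ satisfies $V(G)=S\cup\Gamma(S)$, giving $|V(G)|\leq |S|\,(1+\Delta(G))\leq \alpha(G)\,[1+\Delta(G)]$. The paper states this lemma without proof (describing its bound as very simple, attained e.g.\ by complete graphs and their complements), and your greedy maximal-independent-set argument is exactly the standard proof it implicitly relies on, with the maximal-versus-maximum distinction handled correctly.
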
 
Recall that every $xy \in P_L$ satisfies \eqref{eq:badEventB2i:pair_constraint}, which in turn implies that \eqref{eq:badEventB2i:pair_constraint:K} holds. 
So, using $\cM_i$ we deduce that $\alpha(\cG(P_L)) \leq 90 k / (\epsilon d) \leq p^{-1} n^{25 \epsilon}$. 
To bound the maximum degree in $\cG(P_L)$ we fix $x \notin H \cup I$ and estimate the number of $y$ with $xy \in P_L$. 
As argued above, such pairs satisfy \eqref{eq:badEventB2i:pair_constraint:K}. 
So, using $\cN_i$ and $|\Gamma(x) \cap K| \leq k p n^{65 \epsilon}$, the number of such $y$ is bounded by $16|\Gamma(x) \cap K| / (\epsilon d) \leq n^{90 \epsilon}$, which in turn implies $\Delta(\cG(P_L)) \leq 2n^{90 \epsilon}$. 
Now, Lemma~\ref{lem:graph_nr_vertices} together with the above bounds for $\alpha(\cG(P_L))$ and $\Delta(\cG(P_L))$ yields 
\begin{equation}
\label{eq:badEventB2i:pair_low_degree}
|P_L| \leq p^{-1} n^{25 \epsilon} \big[1 + 2n^{90 \epsilon}\big] \leq p^{-1} n^{120 \epsilon}  . 
\end{equation} 
Putting things together, using \eqref{eq:badEventB2i:pair_large_degree} and \eqref{eq:badEventB2i:pair_low_degree} the number of $xy \in \binom{[n]}{2}$ satisfying \eqref{eq:badEventB2i:pair_constraint} is bounded by
\[
|P_H| + |P_L| \leq kn^{-25\epsilon} + p^{-1} n^{120 \epsilon} \leq kn^{-20\epsilon}  ,
\]
where we used \eqref{eq:K4-constants:Wepsmu} for the last inequality. 
Therefore $\neg\badEC[2,i]$ holds.

Finally, we show that $\neg\badEC[3,i]$ holds. 
Recall that $\Xi_{\Sigma^*}(i)$ contains all quadruples $(u,v,w,z) \in A \times B \times C \times [n]$ which are as in Figure~\ref{fig:quadruple:Xi}, i.e., with $z \notin \{u,v,w\}$ and $\{uw,zu,zv,zw\} \subseteq E(i)$. 
Since $z$ has neighbours in $A$ and $B$, using \eqref{eq:vinI:neighbourhood} we see that $z \notin I$. 
Fix a pair $xy \in \binom{[n]}{2}$. 
Roughly speaking, in the following we bound the number of quadruples in $\Xi_{\Sigma^*}(i)$ which contain $xy$. 
First we count the number of quadruples $(u,v,w,z) \in \Xi_{\Sigma^*}(i)$ with $xy=uw$. 
Given $uw$, by the codegree bound there are at most $(\log n) n p^2$ choices for $z \in (\Gamma(u) \cap \Gamma(w)) \setminus I$ and by \eqref{eq:vnotinI:degree_bounded} we have at most $p^{-1} n^{10 \epsilon}$ possibilities for $v \in \Gamma(z) \cap B$. 
To sum up, there are at most $(\log n) n p^2 \cdot p^{-1} n^{10 \epsilon} \leq k n^{15 \epsilon}$ quadruples in $\Xi_{\Sigma^*}(i)$ with $xy=uw$. 
The remaining cases, where $xy$ equals to one of $zu, zv, zw$ are similar: for every quadruple $(u,v,w,z) \in \Xi_{\Sigma^*}(i)$ that contains $xy$, we need to pick two vertices $a$ and $b$, where in each case $a,b$ are two of $u,v,w$. 
Applying the estimate \eqref{eq:vnotinI:degree_bounded} to bound the number of possibilities for $a$ or $b$ as appropriate, and using the codegree to bound the number of choices for the other, in each case there are again at most $(\log n) n p^2 \cdot p^{-1} n^{10 \epsilon} \leq k n^{15 \epsilon}$ quadruples in $\Xi_{\Sigma^*}(i)$ that contain $xy$. 
Putting things together, in $G(i)$ every pair $xy \in \binom{[n]}{2}$ satisfies $xy \in \{uw,zy,zv,zw\}$ for at most 
\[ 4k n^{15 \epsilon} = o(k^2pn^{-15 \epsilon}) \] 
quadruples $(u,v,w,z) \in \Xi_{\Sigma^*}(i)$, which establishes $\neg\badEC[3,i]$.

\subsection{`Few' partial triples are ignored for $\Sigma^*$}  
\label{sec:good_configuration:few_ignored} 
In this section we show that $\partCs \setminus \partOCs$ is small. 
Let $I_{2,\Sigma^*}(i)$ and $I_{3,\Sigma^*}(i)$  contain all $(u,v,w) \in \partCs$ with $uv \in C(i)$ that were ignored in \emph{any} of the previous steps $0 \leq i' \leq i$ because of (I2) or (I3). 
Thus, since every $(u,v,w) \in \partCs \setminus \partOCs$ was ignored in one of the first $i$ steps, we obtain 
\begin{equation}
\label{eq:good_configuration:ignored_pairs_basic_bound}
|\partCs \setminus \partOCs| \leq |I_{2,\Sigma^*}(i)| + |I_{3,\Sigma^*}(i)|  . 
\end{equation} 
Recall that for every $u \in A$ and $v \in B$ there is at most one triple in $\partCs$ which contains $uv$. 
We claim that for every such pair $uv$ there is in fact at most one $w \in C$ such that $(u,v,w) \in \bigcup_{i' \leq i} \partCs[i']$. 
Indeed, when the pair $uv$ first appears in a partial triple $(u,v,w)$ the pair $uv$ must be open, then no other triple containing $uv$ is added until $(u,v,w)$ is removed, at which point $uv$ is definitely closed, so no other triples $(u,v,w')$ can be added in later steps.

Observe that for every $(u,v,w) \in I_{2,\Sigma^*}(i)$ there exists $i' <i$ with $e_{i'+1}=xy$ and $e_{i'+1} \cap uv = \emptyset$ such that in $G(i')\subseteq G(i)$ we have $u,v \in \Gamma(x) \cap \Gamma(y)$ and \eqref{eq:badEventB2i:pair_constraint}. 
By the findings of Section~\ref{sec:good_configuration} we furthermore know that $\neg\badEC[2,i]$ holds. 
So the number of pairs $xy \in \binom{[n]}{2}$ which satisfy \eqref{eq:badEventB2i:pair_constraint} in $G(i)$ is bounded by $kn^{-20\epsilon}$. 
Furthermore, as argued above, for every triple $(u,v,w) \in I_{2,\Sigma^*}(i)$ the pair $uv$ uniquely determines the third vertex $w$.  
So, considering the number of choices for $xy$ satisfying \eqref{eq:badEventB2i:pair_constraint}, and then the number of $u,v \in \Gamma(x) \cap \Gamma(y)$ for each such pair $xy$, using the codegree bound $(\log n)np^2$ we deduce that 
\begin{equation}
\label{eq:good_configuration:ignored_R1}
|I_{2,\Sigma^*}(i)| \leq kn^{-20\epsilon}  \cdot \big[ (\log n)np^2 \big]^2 \leq k^{3}p^2n^{-15\epsilon}  .
\end{equation}

\begin{figure}[tp]
	\centering
  \setlength{\unitlength}{1bp}%
  \begin{picture}(114.54, 87.81)(0,0)
  \put(0,0){\includegraphics{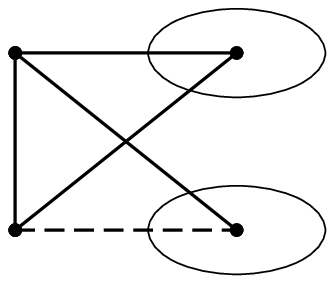}}
  \put(99.11,67.16){\fontsize{11.38}{13.66}\selectfont \makebox[0pt]{$A$}}
  \put(77.75,10.11){\fontsize{11.38}{13.66}\selectfont \makebox[0pt]{$x=v$}}
  \put(99.11,14.44){\fontsize{11.38}{13.66}\selectfont \makebox[0pt]{$B$}}
  \put(80.22,72.87){\fontsize{11.38}{13.66}\selectfont \makebox[0pt]{$u$}}
  \put(14.17,72.87){\fontsize{11.38}{13.66}\selectfont \makebox[0pt]{$z$}}
  \put(14.17,10.11){\fontsize{11.38}{13.66}\selectfont \makebox[0pt]{$y$}}
  \end{picture}%
	\hspace{5.0em}
  \setlength{\unitlength}{1bp}%
  \begin{picture}(114.54, 87.81)(0,0)
  \put(0,0){\includegraphics{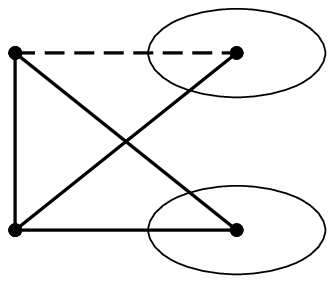}}
  \put(99.11,67.16){\fontsize{11.38}{13.66}\selectfont \makebox[0pt]{$A$}}
  \put(80.50,10.11){\fontsize{11.38}{13.66}\selectfont \makebox[0pt]{$v$}}
  \put(99.11,14.44){\fontsize{11.38}{13.66}\selectfont \makebox[0pt]{$B$}}
  \put(78.0,72.87){\fontsize{11.38}{13.66}\selectfont \makebox[0pt]{$x=u$}}
  \put(14.17,72.87){\fontsize{11.38}{13.66}\selectfont \makebox[0pt]{$y$}}
  \put(14.17,10.11){\fontsize{11.38}{13.66}\selectfont \makebox[0pt]{$z$}}
  \end{picture}%
	\caption{\label{fig:pairs_R3} Pairs $uv$ with $u \in A$ and $v \in B$ such that $e_{i+1}=xy \in C_{uv}(i)$ and $xy \cap uv = x$. Solid lines represent edges, and the dashed line corresponds to the next edge $e_{i+1}=xy$. The vertices $y$ and $z$ may also be in $A \cup B$.}
\end{figure}

Turning to $|I_{3,\Sigma^*}(i)|$, let $H$ contain all vertices $y \in [n] \setminus I$ with $|\Gamma(y) \cap K| \geq p^{-1}n^{-15 \epsilon}$. 
Using $\cN_i$ we infer that, say, $|H| \leq kp n^{20 \epsilon}$. 
Observe that for every $(u,v,w) \in I_{3,\Sigma^*}(i)$ there exists $i' <i$ with $e_{i'+1}=xy \in C_{uv}(i')$ and $e_{i'+1} \cap uv = x$. Hence in $G(i')$ there exists $z \in \Gamma(x) \cap \Gamma(y)$ such that $\{u,v\} \setminus \{x\} \subseteq \Gamma(y) \cap \Gamma(z) \cap K$. 
Similarly as for $I_{2,\Sigma^*}(i)$, the pair $uv$ uniquely determines the third vertex $w$ for every triple  $(u,v,w) \in I_{3,\Sigma^*}(i)$, and therefore it suffices to bound the number of pairs $uv$ with the above properties, cf.\ Figure~\ref{fig:pairs_R3}. 
Using that the triple was not removed due to (R3a) and (R3b), we deduce two additional properties. On the one hand $|\Gamma(y) \cap K| \geq p^{-1}n^{-15 \epsilon}$ holds, which implies $y \in H \cup I$, and, on the other hand, in $G(i')$ all $z \in \Gamma(x) \cap \Gamma(y)$ with $\{u,v\}\setminus\{x\} \subseteq \Gamma(y) \cap \Gamma(z) \cap K$ satisfy 
\begin{equation}
\label{eq:good_configuration:pair_constraint:K}
|\Gamma(z) \cap (\Gamma(y) \cap K)| \geq kpn^{-20\epsilon} =: d . 
\end{equation} 
Now, since $y$ has neighbours in $A$ and $B$, namely $u \in A$ and $v \in B$, using \eqref{eq:vinI:neighbourhood} we deduce $y \not\in I$, which in turn implies $y \in H$. 
With this in mind, we define $\Psi_{\Sigma^*}(i)$ as the set of all quadruples $(u',v',y,z) \in K^2 \times H \times [n]$ for which $z \in \Gamma(y)$, $\{u',v'\} \subseteq \Gamma(y) \cap \Gamma(z) \cap K$ and \eqref{eq:good_configuration:pair_constraint:K} holds. 
The above discussion yields 
\begin{equation}
\label{eq:good_configuration:ignored_R2_0}
|I_{3,\Sigma^*}(i)| \leq |\Psi_{\Sigma^*}(i)|  . 
\end{equation}
Since $y \in H$ satisfies $y \notin I$, by \eqref{eq:vnotinI:degree_bounded} we have $|\Gamma(y) \cap K| \leq p^{-1} n^{10 \epsilon}$. 
So, similar as in Section~\ref{sec:good_configuration}, using $\cN_i$ we see that for every $y \in H$ there are at most $16|\Gamma(y) \cap K|/(\epsilon d) \leq k p n^{35 \epsilon}$ vertices $z$ satisfying \eqref{eq:good_configuration:pair_constraint:K}. 
Furthermore, given $y$ and $z$, the number of choices for $u',v' \in \Gamma(y) \cap \Gamma(z) \cap K \subseteq \Gamma(y) \cap \Gamma(z)$ are each bounded by $(\log n)np^2$.  
Putting things together, using \eqref{eq:good_configuration:ignored_R2_0} we deduce that
\begin{equation}
\label{eq:good_configuration:ignored_R2}
|I_{3,\Sigma^*}(i)| \leq |\Psi_{\Sigma^*}(i)| \leq |H| \cdot kp n^{35 \epsilon} \cdot \big[ (\log n)np^2 \big]^2 \leq k^{2}pn^{60\epsilon}  ,
\end{equation}
where we used $|H| \leq kp n^{20 \epsilon}$ for the last inequality.

Finally, plugging using \eqref{eq:good_configuration:ignored_R1} and \eqref{eq:good_configuration:ignored_R2} into \eqref{eq:good_configuration:ignored_pairs_basic_bound}, using \eqref{eq:K4-constants:Wepsmu} we see that 
\begin{equation*}
\label{eq:good_configuration:ignored_quadruples_bound}
|\partCs \setminus \partOCs| \leq k^{3}p^2n^{-15\epsilon} + k^{2}pn^{60\epsilon} \leq k^{3}p^2n^{-10\epsilon}  ,
\end{equation*}
as claimed. 
This completes the proof of Lemma~\ref{lem:dem:config}. \qed

\bigskip{\bf Acknowledgements.} 
I am grateful to my supervisor Oliver Riordan for many helpful discussions and want to thank him for a very careful reading of an earlier version of this paper, leading to several simplifications in the proof. 
Furthermore, I would also like to thank the referees for many detailed comments.

\small\begin{spacing}{0.4}
\bibliographystyle{plain}

\begin{thebibliography}{10}

\bibitem{Apostol1999}
T.M.~Apostol. 
\newblock An elementary view of {E}uler's summation formula. 
\newblock {\em The American Mathematical Monthly} {\bf 106} (1999), 409--418. 

\bibitem{Bohman2009K3}
T.~Bohman. 
\newblock The triangle-free process. 
\newblock {\em Advances in Mathematics} {\bf 221} (2009), 1653--1677. 

\bibitem{BohmanKeevash2010H}
T.~Bohman and P.~Keevash. 
\newblock The early evolution of the {$H$}-free process. 
\newblock {\em Inventiones Mathematicae} {\bf 181} (2010), 291--336. 

\bibitem{Bollobas2010PC}
B.~Bollob\'as. 
\newblock Personal communication (2010). 

\bibitem{BollobasRiordan2000}
B.~Bollob\'as and O.~Riordan. 
\newblock Constrained graph processes. 
\newblock {\em The Electronic Journal of Combinatorics} {\bf 7} (2000), \#R18. 

\bibitem{BollobasHind1991}
B.~Bollobás and H.R.~Hind. 
\newblock Graphs without large triangle free subgraphs. 
\newblock {\em Discrete Mathematics} {\bf 87} (1991), 119--131. 

\bibitem{ChungGraham98}
F.~Chung and R.~Graham. 
\newblock {\em Erd{\H o}s on graphs: {H}is legacy of unsolved problems}. 
\newblock A K Peters, Wellesley, 1998. 

\bibitem{DudekRoedl2010}
A.~Dudek and V.~R{\"o}dl. 
\newblock On {$K_{s}$}-free subgraphs in {$K_{s+k}$}-free graphs and vertex {F}olkman numbers. 
\newblock {\em Combinatorica} {\bf 31} (2011), 39--53. 

\bibitem{DudekRoedl2010Survey}
A.~Dudek and V.~R{\"o}dl. 
\newblock On the function of {E}rd{\H o}s and {R}ogers. 
\newblock In {\em Ramsey theory}, 
\newblock Progress in Mathematics {\bf 285} (2011), 63–-76. 

\bibitem{ErdoesSuenWinkler1995}
P.~Erd\H{o}s, S.~Suen, and P.~Winkler. 
\newblock On the size of a random maximal graph. 
\newblock {\em Random Structures \& Algorithms} {\bf 6} (1995), 309--318. 

\bibitem{ErdoesRogers62}
P.~Erd{\H o}s and C.A.~Rogers. 
\newblock The construction of certain graphs. 
\newblock {\em Canadian Journal of Mathematics} {\bf 14} (1962), 702--707. 

\bibitem{GerkeMakai2010K3}
S.~Gerke and T.~Makai. 
\newblock No dense subgraphs appear in the triangle-free graph process. 
\newblock {\em The Electronic Journal of Combinatorics} {\bf 18} (2011), \#P168. 

\bibitem{Hoeffding1963}
W.~Hoeffding. 
\newblock Probability inequalities for sums of bounded random variables. 
\newblock {\em Journal of the American Statistical Association} {\bf 58} (1963), 13--30. 

\bibitem{Janson1990}
S.~Janson. 
\newblock Poisson approximation for large deviations. 
\newblock {\em Random Structures \& Algorithms} {\bf 1} (1990), 221--229. 

\bibitem{JansonOleszkiewiczRucinski2004SubgraphCounts}
S.~Janson, K.~Oleszkiewicz, and A.~Ruci{\'n}ski. 
\newblock Upper tails for subgraph counts in random graphs. 
\newblock {\em Israel Journal of Mathematics} {\bf 142} (2004), 61--92. 

\bibitem{JansonRucinski2004UpperTail}
S.~Janson and A.~Ruci{\'n}ski. 
\newblock The infamous upper tail. 
\newblock {\em Random Structures \& Algorithms} {\bf 20} (2002), 317--342. 

\bibitem{JansonRucinski2004Deletion}
S.~Janson and A.~Ruci\'nski. 
\newblock The deletion method for upper tail estimates. 
\newblock {\em Combinatorica} {\bf 24} (2004), 615--640. 

\bibitem{Kim95}
J.H.~Kim. 
\newblock The {R}amsey number {$R(3,t)$} has order of magnitude $t^2/\log t$. 
\newblock {\em Random Structures \& Algorithms} {\bf 7} (1995), 173--207. 

\bibitem{Krivelevich1995}
M.~Krivelevich. 
\newblock Bounding {R}amsey numbers through large deviation inequalities. 
\newblock {\em Random Structures \& Algorithms} {\bf 7} (1995), 145--155. 

\bibitem{Krivelevich1994}
M.~Krivelevich. 
\newblock {$K^s$}-free graphs without large {$K^r$}-free subgraphs. 
\newblock {\em Combinatorics, Probability and Computing} {\bf 3} (1995), 349--354. 

\bibitem{OsthusTaraz2001}
D.~Osthus and A.~Taraz. 
\newblock Random maximal {$H$}-free graphs. 
\newblock {\em Random Structures \& Algorithms} {\bf 18} (2001), 61--82. 

\bibitem{RoedlRucinski1995}
V.~R{\"o}dl and A.~Ruci{\'n}ski. 
\newblock Threshold functions for {R}amsey properties. 
\newblock {\em Journal of the American Mathematical Society} {\bf 8} (1995), 917--942. 

\bibitem{RucinskiWormald1992}
A.~Ruci{\'n}ski and N.C.~Wormald. 
\newblock Random graph processes with degree restrictions. 
\newblock {\em Combinatorics, Probability and Computing} {\bf 1} (1992), 169--180. 

\bibitem{Shearer1995}
J.B.~Shearer. 
\newblock On the independence number of sparse graphs. 
\newblock {\em Random Structures \& Algorithms} {\bf 7} (1995), 269--271. 

\bibitem{Spencer1995}
J.~Spencer. 
\newblock Maximal triangle-free graphs and {R}amsey {$R(3,t)$}. 
\newblock Unpublished manuscript (1995). \texttt{http://cs.nyu.edu/spencer/papers/ramsey3k.pdf}. 

\bibitem{Sudakov2005RSA}
B.~Sudakov. 
\newblock Large {$K_r$}-free subgraphs in {$K_s$}-free graphs and some other {R}amsey-type problems. 
\newblock {\em Random Structures \& Algorithms} {\bf 26} (2005), 253--265. 

\bibitem{Sudakov2005Comb}
B.~Sudakov. 
\newblock A new lower bound for a {R}amsey-type problem. 
\newblock {\em Combinatorica} {\bf 25} (2005), 487--498. 

\bibitem{Turan41}
P.~Tur{\'a}n. 
\newblock On an extremal problem in graph theory (in {H}ungarian). 
\newblock {\em Matematiko Fizicki Lapok} {\bf 48} (1941), 436--452. 

\bibitem{Warnke2010H}
L.~Warnke. 
\newblock Dense subgraphs in the {$H$}-free process. 
\newblock {\em Discrete Mathematics} {\bf 311} (2011), 2703--2707. 

\bibitem{Wolfovitz2009K3Cycles}
G.~Wolfovitz. 
\newblock 4-cycles at the triangle-free process. 
\newblock {\em Electronic Notes in Discrete Mathematics} {\bf 34} (2009), 589--592. 

\bibitem{Wolfovitz2009H}
G.~Wolfovitz. 
\newblock Lower bounds for the size of random maximal {$H$}-free graphs. 
\newblock {\em The Electronic Journal of Combinatorics} {\bf }1 (2009), \#R16. 

\bibitem{Wolfovitz2009K3Subgraph}
G.~Wolfovitz. 
\newblock Triangle-free subgraphs in the triangle-free process. 
\newblock {\em Random Structures \& Algorithms} {\bf 39} (2011), 539–-543. 

\bibitem{Wormald1995DEM}
N.C.~Wormald. 
\newblock Differential equations for random processes and random graphs. 
\newblock {\em The Annals of Applied Probability} {\bf 5} (1995), 1217--1235. 

\bibitem{Wormald1999DEM}
N.C.~Wormald. 
\newblock The differential equation method for random graph processes and greedy algorithms. 
\newblock In {\em Lectures on approximation and randomized algorithms}, PWN, Warsaw (1999), 73--155. 
\end{thebibliography}

\end{spacing}
\normalsize

\begin{appendix}
\section{Appendix}\label{Apx}

\subsection{Differential equation method}
\label{apx:dem:general}
In this section we formulate the improved version of Lemma~$7.3$ in~\cite{BohmanKeevash2010H}, which can be obtained by adapting the ideas/modifications we used in the proof of Lemma~\ref{lem:dem} back to the original setup. 
Intuitively, there are different `types' $j \in \cV$ of random variables, where $\sigma \in \cI_{j}$ denotes particular `instances', which can e.g.\ take into account different `positions' in a graph. 
\begin{lemma}[`Differential Equation Method']%
\label{lem:dem:general}%
Suppose that $m=m(n)$ and $s=s(n)$ are positive parameters. 
Let $\cV=\cV(n)$ be a set, and $\{\cI_{j}\}_{j \in \cV}$ be a family of sets, where $\cI_{j}=\cI_{j}(n)$. 
For every $0 \leq i \leq m$ set $t=t(i):=i/s$. 
Suppose we have a filtration $\cF_0 \subseteq \cF_1 \subseteq \cdots$ and random variables $X_{\sigma}(i)$ and $Y^{\pm}_{\sigma}(i)$ which satisfy the following conditions. 
Assume that for all $j \in \cV$ and $\sigma \in \cI_{j}$ the random variables $X_{\sigma}(i)$ are non-negative and $\cF_i$-measurable for all $0 \leq i \leq m$, and that for all $0 \leq i < m$ the random variables $Y^{\pm}_{\sigma}(i)$ are non-negative, $\cF_{i+1}$-measurable and satisfy
\begin{equation}
\label{eq:lem:dem:general:rv_relation}
X_{\sigma}(i+1) - X_{\sigma}(i) =  Y^{+}_{\sigma}(i) - Y^{-}_{\sigma}(i)  .
\end{equation}
In addition, suppose that for each $j \in \cV$ and $\sigma \in \cI_{j}$ we have positive parameters $u_{\sigma}=u_{\sigma}(n)$, $\lambda_{\sigma}=\lambda_{\sigma}(n)$, $\beta_{\sigma}=\beta_{\sigma}(n)$, $\tau_{\sigma}=\tau_{\sigma}(n)$, $s_{\sigma} = s_{\sigma}(n)$ and $S_{\sigma}=S_{\sigma}(n)$, as well as functions $x_{\sigma}(t)$ and $f_{\sigma}(t)$ that are smooth and non-negative for $t \geq 0$.  
For all $0 \leq i^* \leq m$, let $\cG_{i^*}$ denote the event that for all $0 \leq i \leq i^*$, $j \in \cV$ and $\sigma \in \cI_{j}$, we have 
\begin{equation}
\label{eq:lem:dem:general:parameter_trajectory}
X_{\sigma}(i) = \left(x_{\sigma}(t) \pm \frac{f_{\sigma}(t)}{s_{\sigma}} \right) S_{\sigma}  .
\end{equation}
Moreover, assume that we have an event $\cH_i \in \cF_i$ for all $0 \leq i \leq m$ with $\cH_{i+1} \subseteq \cH_{i}$ for all $0 \leq i < m$. 
Finally, suppose that for $n$ large enough the following conditions hold: 
\begin{enumerate}
\item(Trend hypothesis)
For all $0 \leq i < m$, $j \in \cV$ and $\sigma \in \cI_{j}$, whenever $\cE_i \cap \cH_i$ holds we have 
\begin{equation}
\label{eq:lem:dem:general:parameter_martingale_property}
\EE\big[Y^{\pm_1}_{\sigma}(i) \mid \cF_i \big] = \left( y^{\pm_1}_{\sigma}(t) \pm  \frac{h_{\sigma}(t)}{s_{\sigma}} \right)  \frac{S_{\sigma}}{s}  ,  
\end{equation}
where $y_{\sigma}^{\pm}(t)$ and $h_{\sigma}(t)$ are smooth non-negative functions such that
\begin{equation}
\label{eq:lem:dem:general:derivative}
x'_{\sigma}(t) = y^+_{\sigma}(t) - y^-_{\sigma}(t) \qquad \text{ and } \qquad f_{\sigma}(t) \geq 2\int_{0}^{t} h_{\sigma}(\tau) \ d\tau + \beta_{\sigma}  .
\end{equation}

\item(Boundedness hypothesis) 
For all $0 \leq i < m$, $j \in \cV$ and $\sigma \in \cI_{j}$, whenever $\cE_i \cap \cH_i$ holds we have
\begin{equation}
\label{eq:lem:dem:general:parameter_max_change}
Y^{\pm}_{\sigma}(i)  \leq \frac{\beta_{\sigma}^2}{s_{\sigma}^2 \lambda_{\sigma} \tau_{\sigma}} \cdot  \frac{S_{\sigma}}{u_{\sigma}}  . 
\end{equation}

\item(Initial conditions) 
For all $j \in \cV$ and $\sigma \in \cI_{j}$ we have 
\begin{equation}
\label{eq:lem:dem:general:initial_condition}
X_{\sigma}(0) = \left(x_{\sigma}(0) \pm \frac{\beta_{\sigma}}{3s_{\sigma}} \right) S_{\sigma}  .
\end{equation}

\item(Bounded number of variables) For all $j \in \cV$  and $\sigma \in \cI_{j}$ we have
\begin{equation}
\label{eq:lem:dem:general:bounded_parameters}
\max\{|\cV|, |\cI_j|\} \leq e^{u_{\sigma}}  . 
\end{equation}

\item(High probability event) The event $\cH_i$ satisfies
\begin{equation}
\label{eq:lem:dem:general:H_i}
\PP[\exists i \leq m: \ \cG_i \cap \neg\cH_i] = o(1)  .
\end{equation}

\item(Additional technical assumptions) 
For all $j \in \cV$ and $\sigma \in \cI_{j}$ we have $u_{\sigma} = \omega(1)$ as well as 
\begin{gather}
\label{eq:lem:dem:general:technical_assumptions:sm}
s \geq \max\{15 u_{\sigma} \tau_{\sigma} (s_{\sigma}  \lambda_{\sigma}/\beta_{\sigma})^2, 9s_{\sigma} \lambda_{\sigma}/\beta_{\sigma}\}  , \qquad s/(18 s_{\sigma} \lambda_{\sigma}/\beta_{\sigma}) < m \leq s \cdot \tau_{\sigma}/1944  ,\\ 
\label{eq:lem:dem:general:technical_assumptions:xy}
\sup_{0 \leq t \leq m/s} y^{\pm }_{\sigma}(t) \leq \lambda_{\sigma}  , \qquad 
\int_0^{m/s} |x''_{\sigma}(t)| \ dt \leq \lambda_{\sigma}  , \\ 
\label{eq:lem:dem:general:technical_assumptions:h}
h_{\sigma}(0) \leq s_{\sigma} \lambda_{\sigma}  \qquad \text{ and }  \qquad  \int_0^{m/s} |h'_{\sigma}(t)| \ dt \leq s_{\sigma} \lambda_{\sigma}  . 
\end{gather}
\end{enumerate}
Then  $\cG_m \cap \cH_m$ holds with high probability. 
\end{lemma}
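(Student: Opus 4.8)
The plan is to adapt the proof of Lemma~\ref{lem:dem}, simply dropping the configuration/bad-event layer: since there are no $\Sigma$'s and no local bad events here, the only things that can stop us from tracking a variable $X_\sigma$ are the failure of its trajectory event $\cG_i$ (which plays the role that $\cE_i$ did in Lemma~\ref{lem:dem}; note $\cG_{i^*}\subseteq\cG_{i^*-1}$, so we read the hypotheses ``whenever $\cE_i\cap\cH_i$ holds'' as ``whenever $\cG_i\cap\cH_i$ holds'') or the failure of the global event $\cH_i$. The new ingredient relative to Lemma~\ref{lem:dem} is \eqref{eq:lem:dem:general:H_i}, which is used precisely to rule out $\cH$ failing while the trajectory is still inside its tube.

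First I would record the elementary estimate $\sup_{0\le t\le m/s}h_\sigma(t)\le 2s_\sigma\lambda_\sigma$ from \eqref{eq:lem:dem:general:technical_assumptions:h}, and then the two Euler--Maclaurin summation bounds: for every $0\le i^*\le m$, writing $t^*=i^*/s$,
\begin{gather*}
\frac1s\sum_{i=0}^{i^*-1}x'_\sigma\big(i/s\big)\,S_\sigma=\Big(x_\sigma(t^*)-x_\sigma(0)\pm\tfrac{\lambda_\sigma}{s}\Big)S_\sigma,\\
\frac1s\sum_{i=0}^{i^*-1}h_\sigma\big(i/s\big)\,\frac{2S_\sigma}{s_\sigma}=\Big(2\!\int_0^{t^*}\!h_\sigma(t)\,dt\pm\tfrac{2s_\sigma\lambda_\sigma}{s}\Big)\frac{S_\sigma}{s_\sigma},
\end{gather*}
whose proofs (using \eqref{eq:lem:dem:general:technical_assumptions:xy} and \eqref{eq:lem:dem:general:technical_assumptions:h}) are identical to those of \eqref{eq:lem:dem:sum:x}--\eqref{eq:lem:dem:sum:h:upper} in Appendix~\ref{apx:proof:lem:dem}. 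Next, for each relevant $\sigma$ I would define, whenever $\cG_i\cap\cH_i$ holds,
\[
Y^{\pm_1\pm_2}_\sigma(i):=Y^{\pm_1}_\sigma(i)-\Big(y^{\pm_1}_\sigma(t)\mp_2\frac{h_\sigma(t)}{s_\sigma}\Big)\frac{S_\sigma}{s},
\]
and $Y^{\pm_1\pm_2}_\sigma(i):=0$ otherwise (so the partial sums freeze once we stop), and then put $Z^{\pm_1\pm_2}_\sigma(i):=\sum_{i'=0}^{i-1}Y^{\pm_1\pm_2}_\sigma(i')$, $M_\sigma:=3\lambda_\sigma S_\sigma/s$ and $N_\sigma:=2\beta_\sigma^2 S_\sigma/(s_\sigma^2\lambda_\sigma\tau_\sigma u_\sigma)$. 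Using the trend hypothesis \eqref{eq:lem:dem:general:parameter_martingale_property} together with \eqref{eq:lem:dem:general:derivative}, $Z^{\pm-}_\sigma$ is a supermartingale and $Z^{\pm+}_\sigma$ a submartingale, each starting at $0$; the boundedness hypothesis \eqref{eq:lem:dem:general:parameter_max_change}, together with $\sup h_\sigma\le 2s_\sigma\lambda_\sigma$ and \eqref{eq:lem:dem:general:technical_assumptions:xy}, shows that they are $(M_\sigma,N_\sigma)$-bounded, and the first inequality in \eqref{eq:lem:dem:general:technical_assumptions:sm} gives $M_\sigma\le N_\sigma/10$.

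The core deviation step then runs as in Lemma~\ref{lem:dem}. Suppose $\cG_m$ fails, and let $i^*\le m$ be the first step at which the trajectory condition \eqref{eq:lem:dem:general:parameter_trajectory} is violated, for some index $\sigma$. If $\cH_{i^*-1}$ fails, then choosing the first step $i'\le i^*-1$ at which $\cH$ fails yields an occurrence of $\cG_{i'}\cap\neg\cH_{i'}$ (since $\cG_{i^*-1}$ still holds and $\cG_{i^*-1}\subseteq\cG_{i'}$), an event covered by \eqref{eq:lem:dem:general:H_i}. If instead $\cH_{i^*-1}$ holds, then the identity
\[
Z^{+-}_\sigma(i^*)-Z^{-+}_\sigma(i^*)=X_\sigma(i^*)-X_\sigma(0)-\frac1s\sum_{i=0}^{i^*-1}x'_\sigma\big(i/s\big)S_\sigma-\frac1s\sum_{i=0}^{i^*-1}h_\sigma\big(i/s\big)\frac{2S_\sigma}{s_\sigma}
\]
(and its sign-flipped companion), combined with the initial condition \eqref{eq:lem:dem:general:initial_condition}, the summation bounds above, the constraint on $f_\sigma$ in \eqref{eq:lem:dem:general:derivative}, and $s\ge 9s_\sigma\lambda_\sigma/\beta_\sigma$ from \eqref{eq:lem:dem:general:technical_assumptions:sm}, forces one of $Z^{+-}_\sigma(m),Z^{--}_\sigma(m)$ to be $\ge a:=\beta_\sigma S_\sigma/(6s_\sigma)$, or one of $Z^{++}_\sigma(m),Z^{-+}_\sigma(m)$ to be $\le-a$. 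Since $m>s/(18s_\sigma\lambda_\sigma/\beta_\sigma)$ gives $a<mM_\sigma$, Lemmas~\ref{lem:martingale_upper_tail} and~\ref{lem:martingale_lower_tail} bound each of these four probabilities by $\exp\{-a^2/(3mM_\sigma N_\sigma)\}=\exp\{-\tfrac{1}{648}\tfrac sm\tau_\sigma u_\sigma\}\le e^{-3u_\sigma}$, using $m\le s\tau_\sigma/1944$. Taking the union bound over $\sigma\in\cI_j$ and $j\in\cV$ and invoking \eqref{eq:lem:dem:general:bounded_parameters}, the total probability of this branch is at most $4\max_\sigma e^{-u_\sigma}=o(1)$ since $u_\sigma=\omega(1)$; adding the $o(1)$ from \eqref{eq:lem:dem:general:H_i} gives $\PP[\neg(\cG_m\cap\cH_m)]=o(1)$.

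Essentially everything here---the Euler--Maclaurin bounds, the martingale construction, the verification of $(M_\sigma,N_\sigma)$-boundedness, and the displayed identity---is copied with only notational changes from the proof of Lemma~\ref{lem:dem} (and ultimately from Lemma~$7.3$ of~\cite{BohmanKeevash2010H}), so I would defer all of those routine verifications to Appendix~\ref{apx:proof:lem:dem}. The one genuinely new point, and the one I expect to need the most care, is the interplay between the global event $\cH_i$ and the trajectory event $\cG_i$: one must check that \emph{every} way of failing $\cG_m\cap\cH_m$ is caught either by a martingale deviation (when $\cH$ survives past the first exit of $\cG$) or directly by \eqref{eq:lem:dem:general:H_i} (when $\cH$ fails first), which is exactly the purpose of the case split above.
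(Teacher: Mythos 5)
Your proposal is correct and follows essentially the same route the paper intends: the appendix derives Lemma~\ref{lem:dem:general} by transplanting the proof of Lemma~\ref{lem:dem} (Euler--Maclaurin estimates, the frozen increments $Y^{\pm_1\pm_2}_\sigma$, the $(M_\sigma,N_\sigma)$-bounded super-/submartingales, and the four-way deviation bound via Lemmas~\ref{lem:martingale_upper_tail}--\ref{lem:martingale_lower_tail}) back to the setting without configurations or local bad events, reading the hypotheses' ``$\cE_i\cap\cH_i$'' as ``$\cG_i\cap\cH_i$'' exactly as you do. Your case split using \eqref{eq:lem:dem:general:H_i} to absorb the runs where $\cH$ fails while the trajectories are still in their tubes is precisely the role that extra assumption is meant to play (the point the paper flags about the oversight in Bohman--Keevash's Lemma~$7.3$), so no gap remains.
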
 
Compared to Lemma~$7.3$ in~\cite{BohmanKeevash2010H}, one important advantage of Lemma~\ref{lem:dem:general} is that we state the approximation error in a simpler form and allow for more freedom in choosing the corresponding error functions; this should make it easier to apply our variant in other contexts. 
If possible, it is often convenient to choose the same parametrization for all $\sigma \in \cI_{j}$, e.g.\ $x_{\sigma}(t) = x_{j}(t)$, since they typically correspond to different `instances' of the same type of random variables. 
We point out that by using this simplification, then choosing $s_{j} \geq n^{\epsilon}$ as well as $u_{j} = 2 k_{j} \log n$, and afterwards setting $\tau_j = 1944n^{\epsilon/2}$, $\beta_{j}^{-1} := \lambda_{j} := n^{\epsilon/7}$, $h_{j}(t) := (e_{j} \cdot x_{j} + \gamma_{j})'(t)/4$ and $f_{j}(t) := e_{j}(t)x_{j}(t)-\theta_{j}(t)e_{j}(t)/s_{j}+\theta_{j}(t)$, this not only implies Lemma~$7.3$ in~\cite{BohmanKeevash2010H}, but also weakens certain assumptions significantly. For example, in the additional technical assumptions we relax $y^{\pm }_{j}(t)=O(1)$ to $y^{\pm }_{j}(t) \leq n^{\epsilon/7}$, their $c$ from $\Omega(1)$ to $\Omega(n^{-\epsilon/7})$, and also weaken the lower bound on $m$ from $m > s$ to, say, $m \geq s n^{-\epsilon}$. Furthermore, we e.g.\ improve and simplify the initial conditions by allowing for a small error in the initial value $X_{\sigma}(0)$ and removing the requirement that $e_{j}(0)=\gamma_{j}(0)=0$. 
At first sight our assumption that $\cH_i$ satisfies \eqref{eq:lem:dem:general:H_i} seems to be more restrictive, however, due to an oversight in the proof given by Bohman and Keevash in~\cite{BohmanKeevash2010H}, their Lemma~$7.3$ also needs this additional assumption, which of course holds in their application.  
Another new ingredient is the introduction of the parameters $\lambda_{\sigma}$, $\beta_{\sigma}$ and $\tau_{\sigma}$, which allows for a trade-off between the approximation error, the boundedness hypothesis and the additional technical assumptions. 
For example, as already mentioned in Section~\ref{sec:dem:variant}, in certain applications this might allow for larger one-step changes than Lemma~$7.3$ in~\cite{BohmanKeevash2010H}. 
Finally, as noted in~\cite{BohmanKeevash2010H}, compared to Wormald's formulation of the differential equation method~\cite{Wormald1995DEM,Wormald1999DEM}, if applicable, Lemma~\ref{lem:dem:general} has the advantage that in certain applications 
much weaker estimates on the one-step changes suffice.

\subsection{Proof of Lemma~\ref{lem:dem}}
\label{apx:proof:lem:dem}
We omitted some details in the proof of Lemma~\ref{lem:dem}, since they were very similar to the corresponding calculations in proof of Lemma~$7.3$ in~\cite{BohmanKeevash2010H}. 
In this section we give the missing calculations, and keep the notation and assumptions of Lemma~\ref{lem:dem}.

\textbf{Using the Euler-Maclaurin summation formula.} 
In the following we prove the estimates \eqref{eq:lem:dem:sum:x} and \eqref{eq:lem:dem:sum:h:upper}. 
To this end we use the Euler-Maclaurin summation formula, which relates the integral $\int_{a}^{b} f(x) dx$ with the sum $\sum_{k=a}^{b-1}f(k)$. 
The following variant is implicit in~\cite{Apostol1999}. 
\begin{lemma}%
\label{lem:EulerMaclaurin}%
{\normalfont\cite{Apostol1999}} 
Let $a < b$ be integers. Then for any function $f$ with a continuous derivative on the interval $[a,b]$ we have
\begin{equation}
\label{eq:euler-macluarin:bound}
\left| \int_{a}^{b} f(x) dx - \sum_{k=a}^{b-1}f(k) \right|  \leq  \int_{a}^{b} |f'(x)| dx  .
\end{equation}
\end{lemma}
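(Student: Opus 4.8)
The plan is to reduce the global estimate to a sum of local estimates on the unit intervals $[k,k+1]$, $k=a,\dots,b-1$. Since $a<b$ are integers we may split $\int_a^b f(x)\,dx = \sum_{k=a}^{b-1}\int_k^{k+1} f(x)\,dx$, and the sum $\sum_{k=a}^{b-1}f(k)$ is already in the same block form, so by the triangle inequality it suffices to prove the single-interval bound
\[
\left| \int_k^{k+1} f(x)\,dx - f(k) \right| \le \int_k^{k+1} |f'(x)|\,dx
\]
for each $a \le k \le b-1$; summing these over $k$ and recombining the right-hand sides into $\int_a^b |f'(x)|\,dx$ then yields \eqref{eq:euler-macluarin:bound} immediately.

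For the single-interval bound I would write $f(k) = \int_k^{k+1} f(k)\,dx$, the integrand being constant over an interval of length one, so that
\[
\int_k^{k+1} f(x)\,dx - f(k) = \int_k^{k+1} \bigl(f(x) - f(k)\bigr)\,dx .
\]
Since $f'$ is continuous on $[a,b] \supseteq [k,k+1]$, the fundamental theorem of calculus gives $f(x) - f(k) = \int_k^x f'(u)\,du$ for $x \in [k,k+1]$, hence $|f(x)-f(k)| \le \int_k^x |f'(u)|\,du \le \int_k^{k+1} |f'(u)|\,du$. Substituting into the previous display and moving the absolute value inside the outer integral gives exactly the single-interval bound. (Equivalently one can apply Fubini to $\int_k^{k+1}\!\int_k^x f'(u)\,du\,dx = \int_k^{k+1}(k+1-u)f'(u)\,du$ and use $0 \le k+1-u \le 1$; the two routes are interchangeable and equally short.)

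The argument is entirely routine, so I do not anticipate a real obstacle. The only points deserving care are that the hypothesis ``$f$ has a continuous derivative on $[a,b]$'' is precisely what licenses the use of the fundamental theorem of calculus and guarantees that each $\int_k^{k+1}|f'|$ is finite, and that the block decomposition is tracked so that the local right-hand sides reassemble cleanly into $\int_a^b|f'(x)|\,dx$ with no slack.
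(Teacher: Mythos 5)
Your argument is correct. The paper itself gives no proof of this lemma --- it only remarks that the bound is implicit in the cited article of Apostol, where the Euler--Maclaurin formula expresses the difference $\int_a^b f(x)\,dx - \sum_{k=a}^{b-1} f(k)$ as an integral of $f'$ against a bounded (fractional-part type) weight, from which \eqref{eq:euler-macluarin:bound} follows by taking absolute values. Your block decomposition over the unit intervals $[k,k+1]$, with the single-interval estimate via the fundamental theorem of calculus, is exactly this standard argument in self-contained form; indeed your parenthetical Fubini computation $\int_k^{k+1}\int_k^x f'(u)\,du\,dx = \int_k^{k+1}(k+1-u)f'(u)\,du$ with $0 \le k+1-u \le 1$ recovers the Euler--Maclaurin error term on each block, so there is no gap and nothing essentially different from the intended proof.
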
 
We start by proving \eqref{eq:lem:dem:sum:x}. Elementary calculus shows
\begin{equation}  
\label{eq:lem:dem:integral:x}
\int_{0}^{i^*} x'_{\sigma}\big(t(i)\big) \ di  =  s \int_{0}^{t^*} x'_{\sigma}(t) \ dt = s \bigl[ x_{\sigma}(t^*) - x_{\sigma}(0) \bigr]  .
\end{equation}  
Furthermore, using the Euler-Maclaurin summation formula \eqref{eq:euler-macluarin:bound} and $t(i)=i/s$, we see that
\begin{equation}  
\label{eq:lem:dem:em:x}
\left|\int_{0}^{i^*} x'_{\sigma}\big(t(i)\big) \ di - \sum_{i=0}^{i^*-1} x'_{\sigma}\big(t(i)\big)\right| \leq \frac{1}{s}\int_{0}^{i^*} |x''_{\sigma}\big(t(i)\big)| \ di = \int_{0}^{t^*} |x''_{\sigma}(t)| \ dt  . 
\end{equation}  
Now, combining \eqref{eq:lem:dem:integral:x} and \eqref{eq:lem:dem:em:x} with the additional technical assumptions \eqref{eq:lem:dem:technical_assumptions:xy}, we deduce 
\[
\left|x_{\sigma}(t^*) - x_{\sigma}(0) - \frac{1}{s} \sum_{i=0}^{i^*-1} x'_{\sigma}\big(t(i)\big)\right| \leq \frac{1}{s} \int_{0}^{t^*} |x''_{\sigma}(t)| \ dt \leq \frac{\lambda_{\sigma}}{s}  ,
\]
which in turn implies \eqref{eq:lem:dem:sum:x}. 
Using \eqref{eq:lem:dem:technical_assumptions:h}, with a similar calculation we obtain 
\[
\left|\int_{0}^{t^*}h_{\sigma}(t)\ dt - \frac{1}{s}\sum_{i=0}^{i^*-1} h_{\sigma}\big(t(i)\big)\right| \leq \frac{1}{s} \int_{0}^{t^*} |h'_{\sigma}(t)| \ dt \leq \frac{s_{\sigma}\lambda_{\sigma}}{s}  ,
\]
from which \eqref{eq:lem:dem:sum:h:upper} readily follows.

\textbf{Bounded super-/submartingales.}  
We show that $Z^{\pm -}_{\sigma}(i)$ and $Z^{\pm +}_{\sigma}(i)$ are $(M_{\sigma},N_{\sigma})$-bounded super-/submartingales. 
For bounding the maximum change $Z^{\pm_1\pm_2}_{\sigma}(i+1)-Z^{\pm_1\pm_2}_{\sigma}(i)$ we may assume that $\cE_i \cap \neg\badEL \cap \cH_i$ holds (otherwise the difference is by definition equal to $0$). 
In that case 
\begin{equation}
\label{eq:lem:dem:z-difference}
Z^{\pm_1\pm_2}_{\sigma}(i+1) - Z^{\pm_1\pm_2}_{\sigma}(i) \; = \; Y^{\pm_1\pm_2}_{\sigma}(i) \;=\; Y^{\pm_1}_{\sigma}(i)  - \left(y^{\pm_1}_{\sigma}(t) \mp_2 \frac{h_{\sigma}(t)}{s_{\sigma}}\right) \frac{S_{\sigma}}{s}  .
\end{equation}
Now, using the boundedness hypothesis \eqref{eq:lem:dem:parameter_max_change} as well as $y^{\pm}_{\sigma}(t) \geq 0$ and \eqref{eq:lem:dem:technical_assumptions:h:function}, i.e., $h_{\sigma}(t) \leq 2s_{\sigma}\lambda_{\sigma}$, we see that \eqref{eq:lem:dem:z-difference} is bounded from above by 
\[ \frac{\beta_{\sigma}^2}{s_{\sigma}^2 \lambda_{\sigma} \tau_{\sigma}} \frac{S_{\sigma}}{u_{\sigma}} + \frac{h_{\sigma}(t)}{s_{\sigma}} \frac{S_{\sigma}}{s} \leq \frac{N_{\sigma}}{2} + \frac{2 \lambda_{\sigma} S_{\sigma}}{s} \leq N_{\sigma}  , \]
where we used \eqref{eq:lem:dem:technical_assumptions:sm}, i.e., $s \geq 15 u_{\sigma} \tau_{\sigma} (s_{\sigma}  \lambda_{\sigma}/\beta_{\sigma})^2$,  and \eqref{eq:lem:dem:def:rv:ZMN} for the last inequality. 
Similarly, using $Y^{\pm}_{\sigma}(i),h_{\sigma}(t) \geq 0$ and \eqref{eq:lem:dem:technical_assumptions:xy}, i.e., $y^{\pm}_{\sigma}(t) \leq \lambda_{\sigma}$, we see that \eqref{eq:lem:dem:z-difference} is bounded from below by
\[ -\left(\lambda_{\sigma} + \frac{h_{\sigma}(t)}{s_{\sigma}}\right)\frac{S_{\sigma}}{s} \geq - \frac{3\lambda_{\sigma} S_{\sigma}}{s} = - M_{\sigma}  . \]
For checking the super-/submartingale property we may again assume that $\cE_i \cap \neg\badEL \cap \cH_i$ holds (otherwise the value of $Z^{\pm_1\pm_2}_{\sigma}(i)$ remains unchanged). 
Now by combining \eqref{eq:lem:dem:z-difference} with the trend hypothesis \eqref{eq:lem:dem:parameter_martingale_property}, it is easy to see that $Z^{\pm-}_{\sigma}(i)$ is a supermartingale and $Z^{\pm+}_{\sigma}(i)$ a submartingale.

\textbf{Relating certain variables.}  
We prove that whenever $\cH_{i^*-1} \cap \cE_{i^*-1} \cap \neg\badEL[i^*-1]$ holds we have   
\begin{equation}
\label{eq:lem:dem:ineq:Z}
Z^{+ \pm_2}_{\sigma}(i^*) - Z^{- \mp_2}_{\sigma}(i^*) = X_{\sigma}(i^*)-X_{\sigma}(0) - \frac{1}{s} \sum_{i=0}^{i^*-1} x'_{\sigma}\big(t(i)\big) \cdot S_{\sigma} \pm_2 \frac{1}{s} \sum_{i=0}^{i^*-1} h_{\sigma}\big(t(i)\big) \cdot  \frac{2S_{\sigma}}{s_\sigma}  ,
\end{equation}
which readily implies \eqref{eq:lem:dem:ineq:Z+-Z-+}. 
Using \eqref{eq:lem:dem:rv_relation} and \eqref{eq:lem:dem:derivative} as well as the definition of $Y^{\pm}_{\sigma}(i)$ and $Z^{+ \pm_2}_{\sigma}(i)$, i.e.~\eqref{eq:lem:dem:def:rv:Y} and~\eqref{eq:lem:dem:def:rv:ZMN}, because $\cH_{i^*-1} \cap \cE_{i^*-1} \cap \neg\badEL[i^*-1]$ holds we have 
\begin{equation*}
\begin{split}
X_{\sigma}(i^*)-X_{\sigma}(0) - \sum_{i=0}^{i^*-1} x'_{\sigma}\big(t(i)\big) \cdot \frac{S_{\sigma}}{s} &= \sum_{i=0}^{i^*-1} \left( X_{\sigma}(i+1)-X_{\sigma}(i) - x'_{\sigma}\big(t(i)\big) \frac{S_{\sigma}}{s} \right)\\
& = \sum_{i=0}^{i^*-1} \left(Y^{+}_{\sigma}(i)- y^{+}_{\sigma}\big(t(i)\big) \frac{S_{\sigma}}{s} - Y^{-}_{\sigma}(i) + y^{-}_{\sigma}\big(t(i)\big) \frac{S_{\sigma}}{s} \right)\\
&= \sum_{i=0}^{i^*-1} Y^{+ \pm_2}_{\sigma}(i) - \sum_{i=0}^{i^*-1} Y^{- \mp_2}_{\sigma}(i) \mp_2 2 \sum_{i=0}^{i^*-1} \frac{h_{\sigma}\big(t(i)\big)}{s_\sigma} \cdot \frac{S_{\sigma}}{s}\\
&= Z^{+ \pm_2}_{\sigma}(i^*) - Z^{- \mp_2}_{\sigma}(i^*) \mp_2 \frac{1}{s} \sum_{i=0}^{i^*-1} h_{\sigma}\big(t(i)\big) \cdot \frac{2S_{\sigma}}{s_\sigma}  .
\end{split}
\end{equation*}
Rearranging gives \eqref{eq:lem:dem:ineq:Z}, which, as explained, implies \eqref{eq:lem:dem:ineq:Z+-Z-+}.

\end{appendix}

\end{document}